\newcolumntype{H}{>{\setbox0=\hbox\bgroup}c<{\egroup}@{}}
\newcommand{\R}{{\mathbb R}}
\newcommand{\E}{{\mathbb E}}
\renewcommand{\P}{{\mathbb P}}
\newcommand{\N}{{\mathbb N}}
\newcommand{\eps}{\varepsilon}
\newcommand{\argmin}[1]{{\operatorname{argmin}}_{#1}}
\newcommand{\lmin}[1]{{\operatorname{\lambda_{\text{min}}}}{#1}}
\DeclareMathOperator{\Var}{Var}
\DeclareMathOperator{\Cov}{Cov}
\DeclareMathOperator{\trace}{trace}
\DeclareMathOperator{\diag}{diag}
\DeclareMathOperator{\s}{span}
\newtheorem{theorem}{Theorem}[section]
\newtheorem{proposition}[theorem]{Proposition}
\newtheorem{lemma}[theorem]{Lemma}
\newtheorem{remark}[theorem]{Remark}
\newtheorem*{remark*}{Remark}
\newtheorem{example}[theorem]{Example}
\newtheorem*{definition*}{Definition}
\newtheorem{definition}[theorem]{Definition}
\numberwithin{equation}{section}
\newcounter{cond}
\newcounter{rcnt}[section]
\newcommand{\rem}[1]{}
\newcounter{desccount}
\newcommand{\descref}[1]{\hyperref[#1]{#1}}
\begin{document}
\sloppy

\title{Conditional predictive inference for stable algorithms}
\runauthor{Steinberger, Leeb}

\runtitle{Conditional predictive inference for stable algorithms}

\begin{aug}
\author{\fnms{Lukas} \snm{Steinberger}\thanksref{t1}\ead[label=a1]{lukas.steinberger@univie.ac.at}}
\and
\author{\fnms{Hannes} \snm{Leeb}\thanksref{t2}\ead[label=a2]{hannes.leeb@univie.ac.at}}

\thankstext{t1}{Lukas Steinberger was supported by the Austrian Science Fund (FWF): P~28233-N32 and I~5484-N, the latter project is part of the Research Unit 5381 of the German Research Foundation. Part of this research was conducted while Lukas Steinberger was funded by the German Research Foundation (DFG): RO 3766/401.}

\thankstext{t2}{Hannes Leeb was supported by the Austrian Science Fund (FWF): P 28233-N32 and P 26354-N26.}

\affiliation{University of Vienna}

\address{
	Lukas Steinberger\\	
	Department of Statistics and OR\\
	Data Science @ Uni Vienna\\
	University of Vienna  \\
	Oskar-Morgenstern-Platz 1 \\
	1090 Vienna, Austria\\
	\printead{a1}
	}

\address{
	Hannes Leeb\\
	Department of Statistics and OR\\
	Data Science @ Uni Vienna\\
	University of Vienna  \\
	Oskar-Morgenstern-Platz 1 \\
	1090 Vienna, Austria\\
	\printead{a2}
	}
	
\end{aug}

\begin{keyword}[class=MSC]
\kwd[Primary ]{62G15, 62J02}
\kwd[; secondary ]{62G20, 62J07}
\end{keyword}

\begin{keyword}
\kwd{prediction intervals}
\kwd{high-dimensional regression}
\kwd{algorithmic stability}
\kwd{cross-validation}
\end{keyword}

\begin{abstract}
We investigate generically applicable and intuitively appealing prediction intervals based on $k$-fold cross validation. We focus on the conditional coverage probability of the proposed intervals, given the observations in the training sample (hence, training conditional validity), and show that it is close to the nominal level, in an appropriate sense, provided that the underlying algorithm used for computing point predictions is sufficiently stable when feature-response pairs are omitted. Our results are based on a finite sample analysis of the empirical distribution function of $k$-fold cross validation residuals and hold in non-parametric settings with only minimal assumptions on the error distribution. To illustrate our results, we also apply them to high-dimensional linear predictors, where we obtain uniform asymptotic training conditional validity as both sample size and dimension tend to infinity at the same rate and consistent parameter estimation typically fails.
These results show that despite the serious problems of resampling procedures for inference on the unknown parameters \citep[cf.][]{Bickel83, Mammen96, ElKaroui15}, cross validation methods can be successfully applied to obtain reliable predictive inference even in high dimensions and conditionally on the training data. 

\end{abstract}

\maketitle


\section{Introduction}

It is the fundamental task of (supervised) statistical learning, when given an i.i.d. training sample of feature-response pairs $(x_i,y_i)$ and an additional feature vector $x_0$, to provide a point prediction for the corresponding unobserved response variable $y_0$. In such a situation, a prediction interval that contains the unobserved response variable with a prescribed probability provides valuable additional information to the practitioner. In many applications a training sample is obtained only once and is subsequently used to repeatedly construct point and interval predictions as new measurements of feature vectors become available. In such a situation, it is desirable to control the conditional coverage probability of the prediction interval given the observations in the training sample, rather than the unconditional probability \citep[the latter is controlled, for example, by procedures discussed in][]{Vovk05,Barber19b}. We call this notion of validity of a prediction interval, where the conditional coverage probability given the training data is (approximately) controlled, \emph{training conditional validity} \citep[see also][]{Vovk13}. Notice how this is markedly different from the notion of  \emph{object conditional validity} where the conditioning is on the new feature vector $x_0$ \citep[cf.][]{Barber19}.  

We study a very simple method based on cross validation (CV) residuals, which is generic, in the sense that it can be constructed on top of any algorithm that produces a point predictor, and that yields asymptotic training conditionally valid prediction intervals provided the underlying point prediction algorithm is sufficiently stable. To illustrate the idea and keep notation simple, we begin with the case of $n$-fold cross validation, that is, leave-one-out. For an i.i.d. training sample $T_n = (x_i,y_i)_{i=1}^n$ of size $n$, consisting of $\R^p\times\R$-valued feature-response pairs, and an additional feature vector $x_0$ in $\R^p$, suppose that we have decided to use a prediction algorithm $M_{n,p}:(\R^p\times\R)^n\times \R^p\to\R$ to produce a point prediction $\hat{y}_0 = M_n(T_n,x_0)$ for the real unobserved response $y_0$. If $T_n^{[i]} = (x_j,y_j)_{j\ne i}$ is the sample without the $i$-th observation pair, compute leave-one-out residuals $\hat{u}_i = y_i - M_{n-1,p}(T_n^{[i]},x_i)$, $1\le i\le n$. Finally, to obtain a prediction interval for $y_0$, compute appropriate empirical quantiles $\hat{q}_{\alpha_1}$ and $\hat{q}_{\alpha_2}$ from the collection $\hat{u}_1, \dots, \hat{u}_n$ and report the leave-one-out prediction interval 
$$
PI_{\alpha_1,\alpha_2}^{(L1O)}(T_n,x_0) = [\hat{y}_0+ \hat{q}_{\alpha_1}, \hat{y}_0+ \hat{q}_{\alpha_2}].
$$ 
In this paper we investigate the following conditional coverage probability given the training data, where the randomness comes only from the feature-response pair $(x_0,y_0)$ in the prediction period,
$$
\P(y_0\in PI_{\alpha_1,\alpha_2}^{(L1O)}(T_n,x_0) \| T_n).
$$ 
We investigate this probability in finite samples and in more specific asymptotic settings, including those where the dimension $p$ of the feature vectors $x_i$ increases at the same rate as sample size $n$. We find that even in these challenging scenarios where both $n$ and $p$ are large, the conditional coverage probability of $PI_{\alpha_1,\alpha_2}^{(L1O)}(T_n,x_0)$ is typically close to the nominal level $\alpha_2-\alpha_1$, that is, $PI_{\alpha_1,\alpha_2}^{(L1O)}$ guarantees (approximate) training conditional validity, provided some regularity conditions on the data generating model and the predictor are satisfied. Note that the analogous procedure based on ordinary residuals $y_i - M_{n,p}(T_n,x_i)$ instead of leave-one-out residuals would, in general, not be valid in such a large-$p$ scenario \citep[cf.][]{Bickel83}. Extending these results to the general $k$-fold cross-validation case relies on a generalization of a result by \citet{Bousquet02} on the estimation of the test error of a learning algorithm by its empirical leave-one-out error, which might be of independent interest (see Lemma~\ref{lemma:BousquetCV} in the supplement).

Despite the remarkable simplicity of this method, and its apparent similarity to the Jackknife, we are not aware of any rigorous analysis of its statistical properties. Notable exceptions are \citet{Steinb16d, Steinb21}, which are precursors of this paper, and \citet{Barber19b}; the latter will be discussed in Subsection~\ref{sec:JK+}. Our approach is similar, in spirit, to the methods proposed in \citet{ButlerRoth80}, \citet{Stine85}, \citet{Schmoyer92}, \citet{Olive07} and \citet{Politis13}, in the sense that it relies on resampling and leave-one-out ideas for predictive inference. But the methods from these references, like most resampling procedures in the literature, are investigated only in the classical large sample asymptotic regime where the number of available explanatory variables is fixed. Prominent exceptions are \citet{Bickel83}, \citet{Mammen96} and, more recently, \citet{ElKaroui15}. These articles draw mainly negative conclusions about resampling methods in high dimensions, arguing, for instance, that the famous residual bootstrap in linear regression, which relies on the consistent estimation of the true unknown error distribution, is unreliable when the number of variables in the model is not small compared to sample size. In contrast, we show that under mild conditions the leave-one-out prediction interval $PI_{\alpha_1,\alpha_2}^{(L1O)}$ does not suffer from these problems because it relies on estimation of the conditional distribution of the prediction error $\P(y_0-\hat{y}_0\le t\| T_n)$ instead of an estimator for the unconditional distribution of the error term $y_0-\E[y_0\|x_0]$. That the use of leave-one-out residuals leads to more reliable methods in high dimensions was also observed by \citet{ElKaroui15}.

Our contribution is threefold. First, we show that the leave-one-out prediction interval (and its extension to $k$-fold CV) is approximately training conditionally valid given the training sample $T_n$, that is
$$
\P\left( y_0 \in PI_{\alpha_1, \alpha_2}^{(L1O)}(T_n,x_0)\Big\| T_n\right)\; \approx\; \alpha_2-\alpha_1.
$$
The error term of the above approximation can be controlled in finite samples and asymptotically, provided that the employed prediction algorithm $M_{n,p}$ is sufficiently stable under the omission of feature-response pairs and that it has a bounded (in probability) estimation error as an estimator for the true unknown regression function. It is of paramount importance, however, to point out that we do not need to assume consistent estimability of the regression function and our leading examples are such that consistency fails.

Second, we show that the required stability and approximation properties are satisfied in many cases, including many linear predictors in high dimensional regression problems and even if the true model is not exactly linear. In particular, the proposed method is always valid if the employed predictor is consistent for the unknown regression function (or for an appropriate surrogate target), and is therefore applicable to complex data structures and methods such as non-parametric regression, LASSO prediction, random forests or deep learning (see Section~\ref{sec:consistency}).

Third, we discuss issues of interval length and find that in typical situations predictors with smaller mean squared prediction error lead to shorter prediction intervals. For ordinary least squares prediction, we also investigate the impact of the dimensionality of the regression problem on the interval length and discuss the relationship between the leave-one-out method and an obvious sample splitting technique.
All our results hold uniformly over large classes of data generating processes and under weak assumptions on the unknown error distribution (e.g., the errors may be heavy tailed and non-symmetric, and the standardized design vectors $\Cov[x_i]^{-1/2}x_i$ may have dependent components and a non-spherical distribution).

Our work is greatly inspired by \cite{ElKaroui13b} and \citet{Bean13} \citep[see also][]{ElKaroui13, ElKaroui18}, who investigate efficiency of general $M$-estimators in linear regression when the number of regressors $p$ is of the same order of magnitude as sample size $n$. In particular, the $M$-estimators studied in these references provide one leading example of a class of linear predictors for which our construction of prediction intervals leads to training conditionally valid predictive inference even in high dimensions.

The remainder of the paper is organized as follows. In the following Subsection~\ref{sec:related} we give a brief overview of alternative methods from the large body of literature on predictive inference in regression. Subsection~\ref{sec:notation} introduces the notation that is used throughout the paper. Sections~\ref{sec:main} and \ref{sec:linpred} proceed along a general-to-specific scheme. We begin, in Subsection~\ref{sec:kCVPI}, by introducing the general cross validation method and the notion of training conditional validity and we relate this notion to estimation of the conditional prediction error distribution in Kolmogorov distance. In Subsection~\ref{sec:stability}, we draw the connection between training conditional validity and algorithmic stability and present our main results which provide sufficient conditions for training conditional validity. In Section~\ref{sec:linpred} we then show that these conditions can be verified in challenging statistical scenarios where regression function estimators and the bootstrap usually fail to be consistent. In particular, we consider linear predictors based on regularized $M$-estimators and based on James-Stein-type estimators in a situation where the number of regressors $p$ is not small relative to sample size $n$. We also take a closer look at the ordinary least squares estimator, because its simplicity allows for a rigorous analysis of the resulting interval length. In Section~\ref{sec:consistency}, we then also discuss the important case where the employed predictor is consistent (possibly for some pseudo target rather than the true regression function) and we provide examples on non-parametric regression, high-dimensional LASSO, random forests and deep neural networks. The case of consistency is an important test case for our method. Finally, we present the results of an extensive simulation study in Section~\ref{sec:mc}. Further discussions and possible extensions of our results as well as most of the technical proofs are deferred to the supplementary material~\citet{Steinb22supp}.

\subsection{Related work}
\label{sec:related}

In a fully parametric setting, predictive inference is essentially a special case of parametric inference \citep[see, e.g.,][Subsection 7.5]{Cox74}. Constructing valid prediction sets becomes much more challenging, however, if one is interested in a non-parametric setting. By non-parametric, we do not only mean that the regression function can not be indexed by a finite dimensional Euclidean space, but also that the random fluctuations $y_i-\E[y_i\|x_i]$ about the conditional mean function can not be described by a parametric family of distributions. 

\subsubsection{Tolerance regions}

A rather well researched and classical topic in the statistics literature is the construction of so called tolerance regions or tolerance limits, which are closely related to prediction regions. A tolerance region is a set valued estimate $TR(T_n)\subseteq\R^m$ based on i.i.d. $m$-variate data $z_1,\dots, z_n$, $T_n = (z_1,\dots, z_n)$, such that the probability of covering an independent copy $z_0$ is close to a prescribed confidence level. More precisely, a $(1-\alpha, \rho)$ tolerance region $TR$ is such that $\P(\P(z_0\in TR\|T_n)\ge 1-\alpha)=\rho$, and $TR$ is called a $(1-\alpha)$-expectation tolerance region, if $\E[\P(z_0\in TR\| T_n)] = \P(z_0\in TR) = 1-\alpha$ \citep[cf.][]{Krish09}. The study of non-parametric tolerance regions goes back at least to \citet{Wilks41, Wilks42}, \citet{Wald43} and \citet{Tukey47} \citep[see][for an overview and further references]{Krish09} and is traditionally based on the theory of order statistics of i.i.d. data. These researchers already obtained multivariate distribution-free methods, that is, tolerance regions that achieve a certain type of validity in finite samples without imposing parametric assumptions.
The connection to prediction regions is apparent: If $z_i=(x_i,y_i)$, then a tolerance region $TR(T_n)$ for $z_0 = (x_0,y_0)$ can be immediately used to obtain a prediction region for $y_0$ by setting $PR(T_n,x_0) = \{y: (x_0,y)\in TR(T_n)\}$. However, this is arguably not the most economical way of constructing a prediction region. In fact, the construction of a multivariate and possibly high-dimensional tolerance region appears to be a more ambitious goal than the construction of a prediction region for a univariate response variable. In particular, since estimation of the full density of $z_0$ -- which could be used to compute an optimal highest density region -- is usually not feasible if the dimension $m$ is non-negligible compared to sample size $n$, one has to specify a shape for the tolerance region $TR$ and it is not obvious which shapes are preferable in a non-parametric setting. For example, \citet{Bucc01} provide results for smallest possible hyperrectangles and ellipsoids, but obtain only the classical large sample asymptotic results with fixed dimension. \citet{Chatterjee80} estimate the density non-parametrically, which fails in high dimensions. \citet{Li08} use a notion of data depth to avoid the specification of the shape, but the fully data driven method, again, is only shown to be valid asymptotically, with the dimension fixed. 
Finally, numerically computing the $x_0$-cut of $TR$ to obtain $PR$ is computationally demanding and the result is sensitive to the shape of $TR$.

\subsubsection{Conformal prediction}

A strand of literature which has emerged from the early ideas of non-parametric tolerance regions, but which is more prominent within the machine learning community than the statistics community, is called conformal prediction \citep[][]{Vovk99, Vovk05, Vovk09}. Conformal prediction is a very flexible general framework for construction of prediction regions that can be used in conjunction with any learning algorithm. The general idea is to construct a pivotal $p$-value $\pi(y_\star)$ to test $H_\star: y_0=y_\star$ based on the sample $T_n$ and $x_0$, for each possible value $y_\star$ of $y_0$, and to invert the test to obtain a prediction region for $y_0$, i.e., $PR = \{y: \pi(y)\ge \alpha\}$. The method was primarily designed for an on-line learning setup \citep[cf.][]{Vovk09}, but has recently been popularized in the statistics community by \citet{Lei13, Lei16} and \citet{Lei14}, who study it as a batch method. Aside from their flexibility, conformal prediction methods have the advantage that they are valid in finite samples, in the sense that the unconditional coverage probability $\P(y_0\in PR)$ is no less than the nominal level $1-\alpha$, provided only that the feature-response pairs $(x_0,y_0), (x_1,y_1), \dots, (x_n,y_n)$ are exchangeable. On the other hand, their practical implementation is not so straight forward, because, for the test inversion, the $p$-value $\pi$ has to be evaluated on a grid of possible $y$ values, which is especially tricky if the conformal prediction region is not an interval \citep[see][for further discussion of these issues]{Chen17,Lei17}. Moreover, it is not clear if the classical conformal methods can also provide training conditional validity. In \citet{Vovk12}, a version of conformal prediction was presented that achieves also a certain type of (approximate) training conditional validity. However, the method relies on a sample splitting idea, which usually makes the prediction region unnecessarily wide (see Subsection~\ref{sec:SS} as well as Subsection~\ref{sec:naive} in the supplement for further discussion of sample splitting techniques). A different version of conditional validity (conditioning on $x_0$), is discussed in \citet{Barber19} (see also Remark~\ref{rem:ObjValid} in the supplement).

\subsubsection{The jackknife+}
\label{sec:JK+}
\citet{Barber19b} recently proposed a modification of the leave-one-out method considered here. For the modified method, which they call jackknife+, they derived a finite-sample lower bound for the unconditional coverage probability, under the assumption that the feature-response pairs are exchangeable and without requiring that the prediction algorithm is stable. For a jackknife+ interval with nominal coverage probability $1-\alpha$, the lower bound is $1-2\alpha$; if the prediction algorithm is stable (under omission of a single feature-response pair), the lower bound moves closer towards $1-\alpha$ (provided that the interval is slightly modified further). In simulations and data-examples, \citet{Barber19b} found that the jackknife+ performs essentially like the jackknife, i.e., like the method considered in this paper, unless the prediction algorithm is highly unstable; in the unstable case, jackknife+ outperforms jackknife. The use of an unstable prediction method is, of course, debatable, at least if the user is aware of the instability. The training conditional performance of the jackknife+ interval, that is, its coverage probability conditional on the training data, is yet to be analyzed.

\subsection{Preliminaries and notation}
\label{sec:notation}

For $p\in\N$, let $\mathcal Y \subseteq \R$ and $\mathcal X\subseteq \R^p$ be Borel measurable sets and let $\mathcal Z= \mathcal X\times \mathcal Y$. For $n\in\N$, $n\ge 2$, a realization of a training sample is denoted by $T_n := (z_i)_{i=1}^n\in\mathcal Z^n$, where $z_i = (x_i,y_i)\in\mathcal Z = \mathcal X\times \mathcal Y$ is a feature-response pair. The realizations of feature-response pairs in the prediction period are denoted by $z_0 = (x_0,y_0)\in\mathcal Z$. All the randomness we consider comes from drawing training samples $T_n$ and $z_0$, and we assume that all feature-response pairs are drawn independently and identically distributed from some probability distribution $P$ belonging to a class $\mathcal P$ of Borel probability measures on $\mathcal Z$. By $\P = \P(P)$ we denote the joint product measure of all occurring feature-response pairs with marginal distribution $P\in\mathcal P$ and we write $\E$ for the corresponding expectation operator. Thus, strictly speaking, $\P$ and $\E$ can change their meaning from line to line if the number of feature-response pairs changes. We use expressions like $\P_{T_n}$, $\P_{z_1}$, $P_{x_0}$, $\E_{z_0}$, etc., to denote integration only with respect to the indicated variables (or marginal distributions). 
By $P_{y_0\|x_0}$ we denote a regular conditional distribution of $y_0$ given $x_0$ under $P$.
By $\mu_P(x) := \E_P[y_0\|x_0=x] := \int_{\mathcal Y} y\,P_{y_0\|x_0}(dy\|x)$, $\mu_P:\mathcal X\to\R$, we denote (a version of) the true unknown regression function, if it exists. 
We sometimes express the training data $T_n$ as $(X, Y)$, where $X = [x_1,\dots, x_n]'\in\R^{n\times p}$ and $Y = (y_1,\dots, y_n)'\in\R^n$. Moreover, $X'$ denotes the transpose of $X$, and we write $(X'X)^\dagger$ for the Moore-Penrose inverse of $X'X$. Similarly, we write $X_{[i]} = [x_1,\dots, x_{i-1},x_{i+1},\dots, x_n]'$ and $Y_{[i]} = (y_1,\dots, y_{i-1},y_{i+1},\dots, y_n)'$ for the corresponding leave-one-out quantities.

Next, we formally define the notion of a (learning) algorithm, of a predictor (or estimator) $\hat{\mu}_n$ and of the cross validation residuals. For $n,p\in\N$, write $[n]:=\{1,\dots, n\}$ and consider a collection of measurable functions $(M_{n',p})_{n'\in[n]}$, $M_{n',p} : \mathcal Z^{n'} \times \mathcal X \to \R$. The collection $(M_{n',p})_{n'\in[n]}$ is called a \emph{learning algorithm}. For $k\in[n]$, let $K_1,\dots, K_k\subseteq[n]$ be a partition of $[n]$ and abbreviate $m_l := |K_l|$, $l\in[k]$. For a vector $x\in\mathcal X$, we set $\hat{\mu}_n(x) := M_{n,p}(T_n,x)$ and $\hat{\mu}_n^{(l)}(x) := M_{n-m_l,p}(T_n\setminus K_l,x)$, where $T_n\setminus K_l := (z_j)_{j\notin K_l}$ denotes the reduced training sample where the observations with indices in $K_l$ have been excluded. We call $\hat{\mu}_n:\mathcal X\to\R$ a predictor or estimator and say that it, or the underlying learning algorithm, is \emph{symmetric} if for every $n'\in[n]$, for every choice of $z_1,\dots, z_{n'}\in\mathcal Z$, every $x\in\mathcal X$ and every permutation $\pi$ of $n'$ elements, $M_{n',p}((z_i)_{i=1}^{n'},x) = M_{n',p}((z_{\pi(i)})_{i=1}^{n'},x)$.\footnote{Some of the predictors that appear in this paper happen to be symmetric, but symmetry is actually never imposed as a condition in our general theory. We mention this concept only because it can sometimes simplify things and also appears in the literature.} Finally, we define $k$-fold cross validation ($k$-CV) residuals for $l\in[k]$ and $i\in K_l$, by $\hat{u}_i := y_i - \hat{\mu}^{(l)}_n(x_i)$.

If $f:D\to\R$ is a real valued function on some domain $D$, then $\|f\|_\infty = \sup_{s\in D}|f(s)|$. For $a,b\in\R$, we also write $a\lor b = \max(a,b)$, $a\land b = \min(a,b)$ and $a_+ = a\lor 0$, and let $\lceil \delta\rceil$ denote the smallest integer no less than $\delta\in\R$. If $F:\R\to[0,1]$ is a cumulative distribution function, we write $\alpha\mapsto F^\dagger(\alpha) := \inf\{s\in\R: \alpha\le F(s)\}$ for the corresponding quantile function. We write $U\stackrel{\mathcal L}{=} V$, if the random quantities $U$ and $V$ are equal in distribution and the underlying probability spaces are clear from the context. By a slight abuse of notation, we also write $U\stackrel{\mathcal L}{=} \mathcal L_0$ if the random variable $U$ is distributed according to the probability law $\mathcal L_0$ and, again, the underlying probability space is clear from the context.

For our asymptotic statements, we will also adopt the following array setup: Unless stated otherwise, all quantities introduced above are allowed to also depend on sample size $n$. We consider sequences $(p_n)_{n\in\N}$ and $(k_n)_{n\in\N}$ of positive integers and a collection of data generating distributions $(P_n)_{n\in\N}$, where $P_n$ is a probability measure on $\mathcal Z_n = \mathcal X_n\times\mathcal Y_n\subseteq\R^{p_n+1}$. We use symbols like $\P_n$, $\E_n$, $x_{0,n}$, $y_{0,n}$, etc., to emphasize dependence on $n$. For a collection of measurable functions $\phi_n:\mathcal Z_n^{n+1} \to \R$, we say that $\phi_n$ is $P_n$-bounded in probability if $\limsup_{n\to\infty} \P_n(|\phi_n|>M) \to 0$, as $M\to\infty$, and write $\phi_n = O_{P_n}(1)$. If $\P_n(|\phi_n|>\eps) \to 0$, as $n\to\infty$, for every $\eps>0$, then we say that $\phi_n$ converges in $P_n$-probability to zero and write $\phi_n = o_{P_n}(1)$. Similarly, we say that $\phi_n$ converges in $P_n$-probability to $\psi_n:\mathcal Z_n^{n+1}\to\R$, which is also assumed to be measurable, if $|\phi_n-\psi_n| = o_{P_n}(1)$. Since the probability measure $P_n$ will be chosen arbitrarily from some class $\mathcal P_n$ of probability measures on $\mathcal Z_n$, our asymptotic results will be uniform over the respective classes.


\section{Main results}
\label{sec:main}
\subsection{$k$-CV prediction intervals and training conditional validity}
\label{sec:kCVPI}

For $\alpha\in(0,1)$, we want to construct a prediction interval $PI_{\alpha}(T_n,x_0) = (\hat{\mu}_n(x_0) + L_\alpha(T_n), \hat{\mu}_n(x_0) + U_\alpha(T_n)]$ for $y_0$, where $L_\alpha$ and $U_\alpha$ are measurable functions on $\mathcal Z^n$, such that
\begin{align}\label{eq:valid}
\E \left[ 
	\left| \P \left( y_0 \in PI_{\alpha}(T_n,x_0) \Big\| T_n \right) - (1-\alpha) \right| \right]
\end{align}
is small uniformly over a class $\mathcal P$ of data generating distributions $P$ for the feature-response pairs $(x_i,y_i)$ and $(x_0,y_0)$.
We can not expect the expression in \eqref{eq:valid} to be equal to zero for some fixed $n$ and a reasonably large class $\mathcal P$ (see Remark~\ref{rem:no-exact-validity} in the supplement). Therefore, we are content with \eqref{eq:valid} being close to zero as $n$, and possibly also $p$, is large. A similar but slightly different notion of training conditional validity is studied by \citet{Vovk13}, and is closely related to the conventional notion of a $(1-\alpha, \rho)$ tolerance region for $\rho$ close to $1$ \citep[cf.][]{Krish09}. However, these conventional definitions require only that the conditional coverage probability $\P(y_0\in PI_\alpha(T_n,x_0)\|T_n) = P(y_0\in PI_\alpha(T_n,x_0))$ is no less than the prescribed confidence level $1-\alpha$, with high probability, whereas the requirement that \eqref{eq:valid} is small also excludes overly conservative procedures. Nevertheless, we also refer to our requirement in \eqref{eq:valid} as \emph{training conditional validity}. Note that if \eqref{eq:valid} is small, then also
\begin{align*}
&\left| \P \left( y_0 \in PI_{\alpha}(T_n,x_0) \right) - (1-\alpha) \right|\\
&\quad\quad=
 \left| \E\left[ \P \left( y_0 \in PI_{\alpha}(T_n,x_0) \Big\| T_n \right) - (1-\alpha) \right]\right|\\
 &\quad\quad\le
\E\left[ \left|  \P \left( y_0 \in PI_{\alpha}(T_n,x_0) \Big\| T_n \right) - (1-\alpha) \right| \right]
\end{align*}
will be small. Hence, the prediction interval is then also approximately unconditionally valid, uniformly over $P\in\mathcal P$.

If the conditional distribution function $s\mapsto\tilde{F}_n(s) := \P(y_0 - \hat{\mu}_n(x_0) \le s\| T_n)$ is continuous, then, for $0\le \alpha_1 < \alpha_2\le 1$ fixed, there is an optimal shortest but infeasible interval 
\begin{align}
PI_{\alpha_1,\alpha_2}^{(OPT)} = [\hat{\mu}_n(x_0) + \tilde{q}_{\alpha_1}, \hat{\mu}_n(x_0) + \tilde{q}_{\alpha_2}] \label{eq:OPTPI}
\end{align} 
in the set of all prediction intervals $PI$ of the form $PI = PI(T_n, x_0) = [\hat{\mu}_n(x_0) + L(T_n), \hat{\mu}_n(x_0) + U(T_n)]$ which also satisfy 
\begin{align}
\P\left(y_0 \le \inf PI\Big\| T_n \right) &= \alpha_1, \quad\text{and}\label{eq:alpha1}\\
\P\left(y_0 \ge \sup PI\Big\| T_n \right) &= 1 - \alpha_2: \label{eq:alpha2}
\end{align}
Simply choose $\tilde{q}_{\alpha_1}$ to be the largest $\alpha_1$-quantile of $\tilde{F}$ and $\tilde{q}_{\alpha_2}$ to be the smallest $\alpha_2$-quantile of $\tilde{F}_n$. This gives the user the flexibility to choose precisely what error probability of under- and over-prediction she is willing to accept. This also shows that what we are really interested in is the unknown conditional distribution of the prediction error $y_0 - \hat{\mu}_n(x_0)$.
Thus, for $PI_{\alpha_1,\alpha_2}^{(OPT)}$, \eqref{eq:valid} is actually equal to zero (for $\alpha_1 + 1-\alpha_2=\alpha$), at least if $\mathcal P$ contains only probability distributions on $\mathcal Z$ for which $\tilde{F}_n:\R\to[0,1]$ is almost surely continuous. 

We propose the following simple cross validation idea to approximate the optimal infeasible procedure: 
Consider the weighted empirical distribution function 
\begin{align}\label{eq:Fhat}
\hat{F}_n(s) := \hat{F}_n(s; T_n) := \sum_{l=1}^k\sum_{i\in K_l} \frac{1}{k m_l} \mathbbm{1}_{(-\infty,s]}(\hat{u}_i)
\end{align}
of the $k$-CV residuals $\hat{u}_i = y_i - \hat{\mu}_n^{(l)}(x_i)$, $i\in K_l$, where $\hat{\mu}_n^{(l)}(x_i) = M_{n-m_l,p}(T_n\setminus K_l,x_i)$ is the prediction of the learning algorithm at $x_i$ when all observations from the fold $K_l$ are removed from the training data $T_n$.
For $\alpha\in[0,1]$, let $\hat{q}_{\alpha} := \hat{F}_n^\dagger(\alpha)$ denote the empirical $\alpha$-quantile. Then the $k$-CV prediction interval is given by
\begin{align}\label{eq:kCVPI}
PI_{\alpha_1,\alpha_2}^{(kCV)}(T_n,x_0) \;=\; \hat{\mu}_n(x_0) \;+\; \Big[ \hat{q}_{\alpha_1}, \;\hat{q}_{\alpha_2}\Big].
\end{align}

The idea behind the $k$-CV procedure is remarkably simple. To estimate the conditional distribution $\tilde{F}_n$  of the prediction error $y_0 - \hat{\mu}_n(x_0)$, i.e., 
\begin{align}\label{eq:Ftilde}
\tilde{F}_n(s) := \tilde{F}_n(s; T_n) := \P(y_0 - \hat{\mu}_n(x_0)\le s\|T_n),\quad s\in\R,
\end{align}
we simply use the (weighted) empirical distribution $\hat{F}_n$ of the $k$-CV residuals $\hat{u}_i = y_i -\hat{\mu}_n^{(l)}(x_i)$, $i\in K_l$. Notice that $\hat{\mu}_n$ is independent of $(x_0,y_0)$, and $\hat{\mu}_n^{(l)}$ is independent of $(x_i,y_i)$, since $i\in K_l$, and thus $\hat{u}_i$ has almost the same distribution as the prediction error, except that $\hat{\mu}_n^{(l)}$ is calculated from $m_l=|K_l|$ observations less than $\hat{\mu}_n$. In many cases this difference turns out to be negligible if $n$ is large, even if $p$ is relatively large too, provided that $|K_l|$ is chosen to be sufficiently small. Note, however, that the $k$-CV residuals $(\hat{u}_i)_{i=1}^n$ are not independent, which leads to substantial technical challenges.

The following result shows that, indeed, a sufficient condition for asymptotic training conditional validity \eqref{eq:valid} of the $k$-CV prediction interval in \eqref{eq:kCVPI} is consistent estimation of $\tilde{F}_n$ in Kolmogorov distance. 

\begin{proposition}\label{prop:basic}
Fix $0\le\alpha_1<\alpha_2\le1$, a training sample $T_n\in\mathcal Z^n$, an arbitrary cumulative distribution function $\hat{F}$ (possibly depending on $T_n$) and write $s_{max}(\hat{F}) := \sup_{s\in\R} [\hat{F}(s) - \hat{F}(s^-)]$ for the size of the largest jump of $\hat{F}$. Define the interval
\begin{equation}\label{eq:PI}
PI_{\alpha_1,\alpha_2}(T_n,x_0) := \hat{\mu}_n(x_0) + [\hat{q}_{\alpha_1}, \hat{q}_{\alpha_2}],
\end{equation}
where $\hat{q}_{\alpha}:=\hat{F}^\dagger(\alpha)$. Let $\tilde{F}_n$ be as in \eqref{eq:Ftilde}.

\begin{enumerate}[i)]
\item\label{prop:basic.i} We have
\begin{align*}
\left| P_{(x_0,y_0)} \left( y_0 \in PI_{\alpha_1,\alpha_2}(T_n,x_0) \right) \;-\; (\alpha_2-\alpha_1)\right| \le
2\| \tilde{F}_n-\hat{F}\|_\infty + 2s_{max}(\hat{F}).
\end{align*}

\item\label{prop:basic.ii} If the cdf $s\mapsto \tilde{F}_n(s;T_n)$ is continuous, then the prediction interval defined in \eqref{eq:PI} satisfies
\begin{align*}
\left| P_{(x_0,y_0)} \left( y_0 \in PI_{\alpha_1,\alpha_2}(T_n,x_0) \right) \;-\; (\alpha_2-\alpha_1)\right| \quad\le\quad
4\|\tilde{F}_n-\hat{F}\|_\infty.
\end{align*}
\end{enumerate}
\end{proposition}

\begin{remark}\normalfont
Note that the inequalities of Proposition~\ref{prop:basic} are purely algebraic statements for a fixed training set $T_n$. Also note that the coverage probability $P_{(x_0,y_0)}( y_0 \in PI_{\alpha_1,\alpha_2}(T_n,x_0) )$ is a version of the conditional probability $\P( y_0 \in PI_{\alpha_1,\alpha_2}(T_n,x_0)\| T_n)$.
\end{remark}

\begin{proof}[Proof of Proposition~\ref{prop:basic}]
For arbitrary $\alpha\in[0,1]$, we have $\alpha \le \hat{F}\circ \hat{F}^\dagger(\alpha) \le \alpha + s_{max}(\hat{F})$ and therefore
\begin{equation}\label{eq:GG}
|\hat{F}\circ \hat{F}^\dagger(\alpha_2) - \hat{F}\circ \hat{F}^\dagger(\alpha_1) - (\alpha_2-\alpha_1)| \le s_{max}(\hat{F}). 
\end{equation}
Now Part~\ref{prop:basic.i}) follows upon noticing that
\begin{align*}
&P_{(x_0,y_0)} \left( y_0 \in PI_{\alpha_1,\alpha_2}(x_0) \right) 
\;=\;
\tilde{F}_n(\hat{q}_{\alpha_2}) - \tilde{F}_n(\hat{q}_{\alpha_1}^-) \\
&\quad\;=\;
\tilde{F}_n(\hat{q}_{\alpha_2}) - \hat{F}(\hat{q}_{\alpha_2})
+ \hat{F}(\hat{q}_{\alpha_1}^-) - \tilde{F}_n(\hat{q}_{\alpha_1}^-) 
+\hat{F}(\hat{q}_{\alpha_2}) - \hat{F}(\hat{q}_{\alpha_1}) + \\
&\quad\quad+\hat{F}(\hat{q}_{\alpha_1}) - \hat{F}(\hat{q}_{\alpha_1}^-),
\end{align*}
and $|\hat{F}(\hat{q}_{\alpha_1}) - \hat{F}(\hat{q}_{\alpha_1}^-)| \le s_{max}(\hat{F})$.

For Part~\ref{prop:basic.ii}) let $s_*\in\R$ denote a point at which the largest jump of $\hat{F}$ occurs, that is, $s_{max}(\hat{F}) = \hat{F}(s_*) - \hat{F}(s_*^-)$. 
Then either $\tilde{F}_n(s_*)\le \hat{F}(s_*) - s_{max}/2$ or $\tilde{F}_n(s_*) > \hat{F}(s_*) - s_{max}/2 = \hat{F}(s_*^-) + s_{max}/2$. The former case implies $\tilde{F}_n(s_*) - \hat{F}(s_*) \le - s_{max}/2$ whereas, by continuity of $\tilde{F}_n$, the latter implies $\tilde{F}_n(s_*^-) - \hat{F}(s_*^-) > s_{max}/2$. Either way, we have $\|\hat{F}-\tilde{F}_n\|_\infty\ge s_{max}/2$.
Using \eqref{eq:GG} again, we obtain
\begin{align*}
\left|\hat{F}(\hat{q}_{\alpha_2}) - \hat{F}(\hat{q}_{\alpha_1}) - (\alpha_2-\alpha_1)\right| 
\le s_{max}(\hat{F}) \le 2\|\hat{F}-\tilde{F}_n\|_\infty,
\end{align*}
which concludes the proof, since for continuous $\tilde{F}$, we have
\begin{align*}
&P_{(x_0,y_0)} \left( y_0 \in PI_{\alpha_1,\alpha_2}(x_0) \right) 
\;=\;
\tilde{F}_n(\hat{q}_{\alpha_2}) - \tilde{F}_n(\hat{q}_{\alpha_1}^-) 
= \tilde{F}_n(\hat{q}_{\alpha_2}) - \tilde{F}_n(\hat{q}_{\alpha_1}) \\
&\quad\;=\;
\tilde{F}_n(\hat{q}_{\alpha_2}) - \hat{F}(\hat{q}_{\alpha_2})
+ \hat{F}(\hat{q}_{\alpha_1}) - \tilde{F}_n(\hat{q}_{\alpha_1}) 
+\hat{F}(\hat{q}_{\alpha_2}) - \hat{F}(\hat{q}_{\alpha_1}).
\end{align*}
\end{proof}

By virtue of Proposition~\ref{prop:basic}, most of what follows will be concerned with the analysis of $\|\tilde{F}_n-\hat{F}_n\|_\infty$.
We are particularly interested in situations where, for a fixed $x\in\mathcal X$, $\hat{\mu}_n(x)$ does not concentrate around $\mu_P(x) = \E[y_0\|x_0=x]$ with high probability but remains random (cf. Remark~\ref{rem:Dicker} in the supplement). In such cases, the unconditional distribution function of the prediction error $\P(y_0-\hat{\mu}_n(x_0)\le s) = \E[\tilde{F}_n(s)]$, the empirical distribution function of the ordinary residuals $s\mapsto\frac{1}{n}\sum_{i=1}^n \mathbbm 1_{(-\infty,s]}(y_i-\hat{\mu}_n(x_i))$ and the true error distribution function $P(y_0-\mu_{P}(x_0)\le s)$ need not be close to one another, because $\hat{\mu}_n$ may not contain enough information about the true regression function $\mu_P$ \citep[see, for instance,][for a linear regression example where $\mu_P(x) = x'\beta_P$]{Bickel83,Bean13}\footnote{It turns out, however, at least in the linear model $\mu_P(x) = x'\beta_P$ and for appropriate estimators of $\beta_P$, that the conditional distribution of the prediction error $\tilde{F}_n$ does concentrate at its mean, i.e., the unconditional distribution, even if $n$ and $p$ are of the same order of magnitude (cf. Subsection~\ref{sec:PIlength} and Lemma~\ref{lemma:UnifWeak} in the proof of Theorem~\ref{thm:PIlength}).}. Nevertheless, we will see that even in such a challenging scenario, it is often possible to consistently estimate the conditional distribution $\tilde{F}_n$ of $y_0 - \hat{\mu}_n(x_0)$, given the training sample $T_n$, by the (weighted) empirical distribution $\hat{F}_n$ of the $k$-CV residuals.

\subsection{The role of algorithmic stability}
\label{sec:stability}

In this section we present general results that relate the uniform estimation error $\|\tilde{F}_n-\hat{F}_n\|_\infty$ to a measure of stability of the underlying predictor $\hat{\mu}_n$. For our first result, sample size $n\ge2$ and dimension $p\ge 1$ are fixed. We only need the following condition on the data generating distribution.

\begin{enumerate}
        \setlength\leftmargin{-20pt}
\setcounter{cond}{\theenumi}       
 
\renewcommand{\theenumi}{(C\arabic{enumi})}
\renewcommand{\labelenumi}{\textbf{\theenumi}} 

\item \label{c.density} 
The conditional distribution $P_{y_0\|x_0}$ has a Lebesgue density $f_{y_0\|x_0}$.
\stepcounter{cond}
\end{enumerate}


Notice that, although the training data $(x_i,y_i)$, $i\in[n]$, are assumed to be realizations of i.i.d. random vectors, Condition~\ref{c.density} allows to model heteroskedasticity, because the conditional variance of the response $y_i$ given $x_i$ is allowed to depend on the value of the feature vector $x_i$.
Building on terminology from \citet{Bousquet02} \citep[see also][]{Devroye79}, we use the following probabilistic notion to quantify algorithmic stability, which is adjusted to the general case of $k$-fold cross validation and which depends on the true data generating distribution $P$ on $\mathcal Z$.

\begin{definition}\label{DEF:STABLE}
The $k$-stability coefficient of the predictor $\hat{\mu}_n$ is defined as
\begin{align*}
\eta_{n,k} := \eta_{n,k}(P) := \frac1k\sum_{l=1}^k \E\left[\left(\|f_{y_0\|x_0}\|_\infty \left|\hat{\mu}_n(x_0) - \hat{\mu}_n^{(l)}(x_0)\right|\right) \land 1\right].
\end{align*} 
\end{definition}

The stability coefficient $\eta_{n,k}$ measures the average change of the prediction of $\hat{\mu}_n$ as the observations of each of the $k$-folds are removed in turn for training. A highly stable algorithm has small stability coefficient. In the extreme case of an algorithm that does not even make use of the training data at all, we have $\eta_{n,k} = 0$. 
Since the feature-response pairs are assumed to be i.i.d. under $\P$, it is easy to see that in case of equal fold sizes $m_l = \frac{n}{k}$, $l\in[k]$, a symmetric predictor has stability coefficient $\eta_{n,k}=\E[(\|f_{y_0\|x_0}\|_\infty|\hat{\mu}_n(x_0) - \hat{\mu}_n^{(1)}(x_0)|)\land 1]$. Also note that a predictor with $\eta_{n,k}=0$ can not depend on the training data in a non-trivial way (cf. Lemma~\ref{lemma:0stable} in the supplement).
We are now in the position to state our main result on the estimation of $\tilde{F}_n(s) = \P(y_0 - \hat{\mu}_n(x_0)\le s \| T_n)$ by $\hat{F}_n(s) = \sum_{l=1}^k\sum_{i\in K_l}\frac{1}{k\cdot m_l}\mathbbm 1_{(-\infty, s]}(\hat{u}_i)$. To that end, we adopt the triangular array setup described in Subsection~\ref{sec:notation}. 

\begin{theorem}\label{thm:densityAsymp}
For $n\in\N$, let $p=p_n$ and $k=k_n$ be positive integers, with $k_n\to\infty$ as $n\to\infty$, and let $P_n$ be a data generating distribution on $\mathcal Z_n = \mathcal X_n\times\mathcal Y_n$, with $\mathcal X_n\subseteq\R^{p_n}$, $\mathcal Y_n\subseteq \R$, that satisfies \ref{c.density}. Furthermore, suppose that there exists a scaling constant $\sigma_n^2\in(0,\infty)$ and a measurable function $g_n:\mathcal X_n\to \R$, such that the following hold true:
\begin{align}
\label{eq:densityBound}
&\sigma_n\|f_{y_{0,n}\|x_{0,n}}\|_\infty = O_{P_n}(1),\\
\label{eq:errorBound}
&\frac{|y_{0,n}-g_n(x_{0,n})|}{\sigma_n} = O_{P_n}(1),\\
\label{eq:estError}
&\frac{|g_{n}(x_{0,n})-\hat{\mu}_n(x_{0,n})|}{\sigma_n} = O_{P_n}(1),\\
\label{eq:etan}
\eta_{n,k_n} = \frac{1}{k_n}\sum_{l=1}^{k_n} &\E_n\left[\left(\|f_{y_{0,n}\|x_{0,n}}\|_\infty \left|\hat{\mu}_n(x_{0,n}) - \hat{\mu}_n^{(l)}(x_{0,n})\right|\right) \land 1\right] \xrightarrow[]{n\to\infty} 0.
\end{align}
Then, the (weighted) empirical cdf $\hat{F}_n$ of the $k_n$-fold cross validation residuals satisfies
$$
\E_n\left[\|\tilde{F}_n-\hat{F}_n\|_\infty \right] \xrightarrow[n\to\infty]{} 0.
$$
Moreover, for $0\le \alpha_1<\alpha_2\le 1$, the $k_n$-fold cross validation interval is asymptotically training conditionally valid, i.e.,
\begin{equation}\label{eq:AsympVal}
\E_n \left[ 
	\left| \P_n \left( y_{0,n} \in PI_{\alpha_1,\alpha_2}^{(k_nCV)}(T_n,x_{0,n}) \Big\| T_n \right) - (\alpha_2-\alpha_1) \right| \right] 
	\xrightarrow[n\to\infty]{}\;0.
\end{equation}
\end{theorem}

\begin{remark}[Uniform training conditional validity]\normalfont
For each $n$, let $p_n$ and $k_n$ be as in Thorem~\ref{thm:densityAsymp}, and let $\mathcal P_n$ be a class of probability measures on $\mathcal Z^n$. If Theorem~\ref{thm:densityAsymp} applies for any sequence $P_n\in\mathcal P_n$, $n\ge1$, then the $k_n$-fold cross validation interval is uniformly asymptotically training conditionally valid, i.e., the left-hand-side of \eqref{eq:AsympVal} converges to zero even when taking the supremum over all $P_n\in\mathcal P_n$. 
\end{remark}

In Theorem~\ref{thm:densityAsymp}, one should think of the scaling constant $\sigma^2_n$ and the function $g_n$ as the error variance and the regression function in a non-linear regression model $y_{0} = g_n(x_{0}) + u_{0}$, with additive error $u_{0}$ that is independent of $x_{0}$ and has variance $\sigma_n^2$. However, they can be taken as appropriate pseudo parameters if needed. Also notice that Condition~\ref{c.density} allows for a heteroskedastic error term.
Theorem~\ref{thm:densityAsymp} is actually a simple consequence of the following finite sample bound.

\begin{theorem}
\label{thm:density}
Fix positive finite constants $\eps, c_1,c_2$ and a measurable function $g:\mathcal X\to\R$. If Condition~\ref{c.density} holds, then
\begin{align*}
\E\left[\|\hat{F}_n - \tilde{F}_n\|_\infty \right]
\;&\le\;
P(|y_0-g(x_0)|\ge c_1) + \P(|\hat{\mu}_n(x_0)-g(x_0)| > c_2)\\
&\quad + \E_{x_0}\left[(\eps\|f_{y_0\|x_0}\|_\infty)\land1\right]\\
&\quad+
\sqrt{\left(\frac{2(c_1+c_2)}{\eps} + 2 \right)\left[\frac{1}{4(k-1)} + 5\eta_{n,k}\right]},
\end{align*}
where $\eta_{n,k}$ is the stability coefficient of the predictor $\hat{\mu}_n$.

\end{theorem}

\begin{proof}[Proof of Theorem~\ref{thm:densityAsymp}]
Set $\nu_n:= \frac{1}{4(k_n-1)} + 5\eta_{n,k_n} = o(1)$ and apply Theorem~\ref{thm:density} with $c_1=c_2=\sigma_n\nu_n^{-1/3}$, $\eps=\sigma_n\nu_n^{1/3}$. For the second claim, note that under \ref{c.density} we have $\P_n(y_{0,n}-\hat{\mu}_n(x_{0,n})=c\| T_n) = \E_{x_{0,n}}[\P_n(y_{0,n}-\hat{\mu}_n(x_{0,n})=c\| T_n, x_{0,n})] = 0$, for all $c\in\R$ and all $T_n\in\mathcal Z_n^n$, which means $s\mapsto \tilde{F}_n(s;T_n)$ is continuous. Now apply Proposition~\ref{prop:basic}.\eqref{prop:basic.ii}.
\end{proof}

Theorem~\ref{thm:density} provides an upper bound on the risk of estimating the conditional prediction error distribution $\tilde{F}_n$ by the (weighted) empirical distribution of the cross validation residuals $\hat{F}_n$. The upper bound crucially relies on the properties of the chosen estimator $\hat{\mu}_n$ for the (pseudo) regression function $g$. If the sample size is sufficiently large and if the estimator is sufficiently stable and has a moderate estimation error, then the parameters $\eps, c_1, c_2$ can be chosen such that the upper bound is small. This is what we do in Theorem~\ref{thm:densityAsymp}. It is important to note that Theorem~\ref{thm:densityAsymp} and Theorem~\ref{thm:density} are informative also in case the estimator $\hat{\mu}_n$ is not consistent for $g$, as is often the case when $p/n \nrightarrow 0$. The bound of Theorem~\ref{thm:density} also exhibits an interesting trade-off between the $\eta$-stability of $\hat{\mu}_n$ and the magnitude of its estimation error. More stable estimators are allowed to be less accurate whereas less stable estimators need to achieve higher accuracy in order to be as reliable for predictive inference purposes as a more stable algorithm.

Theorem~\ref{thm:densityAsymp} and Theorem~\ref{thm:density} show that the $k$-CV prediction interval in \eqref{eq:kCVPI} is approximately uniformly training conditionally valid, i.e., has the property that \eqref{eq:valid} is uniformly small at least for large $n$, provided that the underlying estimator $\hat{\mu}_n$ has two essential properties: First, the $k$-stability coefficient of the estimator must be (uniformly) small if $n$ is large, as in \eqref{eq:etan}.
This is an intuitively appealing assumption since otherwise the $k$-CV residuals $\hat{u}_i = y_i - \hat{\mu}_n^{(l)}(x_i)$ may not be well suited to estimate the distribution of the prediction error $y_0 - \hat{\mu}_n(x_0)$. Second, the scaled estimation error $(\mu_{P}(x_0) - \hat{\mu}_n(x_0))/\sigma_P$ at the new observation $x_0$ must be $P_n$-bounded (cf. \eqref{eq:estError} with $g_n = \mu_{P_n}$).
This is used to guarantee that the conditional distribution $\tilde{F}_n$ of the prediction error $y_0 - \hat{\mu}_n(x_0)$ given the training data $T_n$ is tight in an appropriate sense (cf. Lemma~\ref{lemma:Ftilde}\ref{l:Tightness} in the supplement), so that a pointwise bound on $|\hat{F}_n(t)-\tilde{F}_n(t)|$ can be turned into a uniform bound.
In the following sections we demonstrate that these two conditions on the estimator $\hat{\mu}_n$ are satisfied in several different contexts. From now on, as in Theorem~\ref{thm:densityAsymp}, we will take on an asymptotic point of view.


\section{Linear prediction with many variables}
\label{sec:linpred}

In this section we investigate a scenario in which both consistent parameter estimation as well as bootstrap consistency fail \citep[cf.][]{Bickel83, ElKaroui15}, but the $k$-CV prediction interval is still asymptotically uniformly training conditionally valid. See Section~\ref{sec:consistency} for a discussion of scenarios where consistent parameter estimation is possible. For simplicity, in this section we consider only the leave-one-out case, that is, $k_n = n$. For $\kappa\in[0,1)$, we fix a sequence of positive integers $(p_n)$, such that $p_n/n\to \kappa$ as $n\to\infty$ and $n>p_n+1$ for all $n\in \N$. In case $\kappa>0$, this type of `large $p$, large $n$' asymptotics has the advantage that certain finite sample features of the problem are preserved in the limit, while offering a workable simplification. It turns out that conclusions drawn from this type of asymptotic analyses often provide remarkably accurate descriptions of finite sample phenomena.

We work with linear predictors of the form $\hat{\mu}_n(x_0) = x_0'\hat{\beta}_n$ for various estimators $\hat{\beta}_n$. Suppose the second moment matrix $\Sigma_P = \E_{x_0}[x_0x_0']$ exists. Then the conditions \eqref{eq:etan} and \eqref{eq:estError} can be verified as follows:
For $\eps>0$,
\begin{align*}
&\E\left[\left( \|f_{y_{0}\|x_{0}}\|_\infty |\hat{\mu}_n(x_0) - \hat{\mu}_n^{(1)}(x_0)|\right)\land 1\right] \\
&\quad\le 
\P\left( \frac{|x_0'\hat{\beta}_n - x_0'\hat{\beta}_{n,[1]}|}{\sigma_n} > \eps\right) +
\E_{x_0}\left[\left( \eps \|\sigma_nf_{y_0\|x_0}\|_\infty\right)\land 1\right]\\
&\quad\le 
\E\left[ \left(\frac{1}{\eps^2} \left\|\Sigma_P^{1/2}\left( \hat{\beta}_n-\hat{\beta}_{n,[1]}\right)\right\|_2^2/\sigma_n^2\right)\land 1\right]
+
\E_{x_0}\left[\left( \eps \|\sigma_nf_{y_0\|x_0}\|_\infty\right)\land 1\right]
\end{align*}
where, for the second inequality, we have used the conditional Markov inequality along with independence of $x_0$ and $T_n$ under $\P$, and $\hat{\beta}_{n,[1]}$ denotes the estimator based on the reduced training sample $(X_{[1]},Y_{[1]})$. Thus \eqref{eq:etan} follows if 
\begin{align}
\|\sigma_n f_{y_{0,n}\|x_{0,n}}\|_\infty &= O_{P_n}(1) \quad\text{and} \label{eq:densBound}\\
\left\|\Sigma_{P_n}^{1/2}\left( \hat{\beta}_n-\hat{\beta}_{n,[1]}\right)\right\|_2/\sigma_n^2 &= o_{P_n}(1). \label{eq:etanLin}
\end{align} 
Notice that \eqref{eq:densBound} coincides with \eqref{eq:densityBound}.
By a similar argument, we find that \eqref{eq:estError} with $g_n(x_0) = x_0'\beta_n$ follows if
\begin{align}
\left\|\Sigma_{P_n}^{1/2}\left( \hat{\beta}_n-\beta_n\right)\right\|_2/\sigma_n &= O_{P_n}(1), \label{eq:estErrorLin}
\end{align}
for some vectors $\beta_n\in\R^{p_n}$, and \eqref{eq:errorBound} can be expressed as
\begin{align}\label{eq:errBound}
|y_{0,n} - x_{0,n}'\beta_n|/\sigma_n = O_{P_n}(1).
\end{align}


\subsection{Regularized $M$-estimators}
\label{sec:ElKaroui}

An important class of linear predictors for which our theory on the leave-one-out prediction interval applies are those based on regularized $M$-estimators investigated by \citet{ElKaroui18} in the challenging scenario where $p/n$ is not close to zero \citep[see also][]{ElKaroui13b, Bean13, ElKaroui13}. For a given convex loss function $\rho:\R\to\R$ and a fixed tuning parameter $\gamma\in(0,\infty)$ (both not depending on $n$), consider the estimator
\begin{equation}\label{eq:robust}
\hat{\beta}_n^{(\rho)} := \argmin{b\in\R^{p}} \frac{1}{n} \sum_{i=1}^n \rho(y_i-x_i'b) + \frac{\gamma}{2}\|b\|_2^2.
\end{equation}
In a remarkable tour de force, \citet{ElKaroui18} studied the estimation error $\|\hat{\beta}_n^{(\rho)} - \beta\|_2$ as $p/n\to \kappa\in(0,\infty)$, in a linear model $y_i=x_i'\beta + u_i$, allowing for heavy tailed errors (including the Cauchy distribution) and non-spherical design \citep[see Subsection~2.1 in][for details on the technical assumptions]{ElKaroui18}. In particular, the author shows that $\|\hat{\beta}_n^{(\rho)} - \beta\|_2$ converges in probability to a deterministic positive and finite quantity $r_\rho(\kappa)$ and characterizes the limit through a system of non-linear equations. On the way to this result, \citet[][Theorem~3.9 together with Lemma~3.5 and the ensuing discussion]{ElKaroui18} also establishes the stability property $\|\hat{\beta}_n^{(\rho)} - \hat{\beta}_{n,[1]}^{(\rho)}\|_2\to0$ in probability. Thus, under the assumptions maintained in that reference, \eqref{eq:etanLin}, \eqref{eq:estErrorLin} and \eqref{eq:errBound} hold, and the leave-one-out prediction interval based on the linear predictor $\hat{\mu}_n(x_0)=x_0'\hat{\beta}_n^{(\rho)}$ is asymptotically training conditionally valid, provided that also the boundedness condition \eqref{eq:densBound} is satisfied. 
Finally, we note that a detailed assessment of the predictive performance of $\hat{\beta}_n^{(\rho)}$ in dependence on $\rho$ requires a highly non-trivial analysis of $r_\rho(\kappa)$. For the asymptotic validity of the leave-one-out prediction interval, however, all the information needed on $r_\rho(\kappa)$ is that it is finite.


\subsection{James-Stein type estimators}

Another important example is the class of linear predictors $\hat{\mu}_n(x_0) = x_0'\hat{\beta}_n^{(JS)}$ based on James-Stein type estimators $\hat{\beta}_n^{(JS)}$ defined below. Here, we can allow for the class of data generating processes defined in Condition~\ref{c.non-linmod} below. The feature-response pairs are realizations of a non-Gaussian random design non-linear homoskedastic regression model with regression function $\mu_P$ and error variance $\sigma_P$. Moreover, the feature vectors $x_i$ are allowed to have a complex geometric structure, in the sense that the standardized design vector $\Sigma_P^{-1/2} x_1$ is not necessarily concentrated on a sphere of radius $\sqrt{p_n}$, as would be the case if $\mathcal L_l$ in Condition~\ref{c.non-linmod} was supported on $\{-1,1\}$ (see, e.g., Subsection~3.2 in \citealp[][]{ElKaroui10} and Subsection~2.3.1 in \citealp[][]{ElKaroui18} for further discussion of this point).

\begin{enumerate}
        \setlength\leftmargin{-20pt}
\setcounter{enumi}{\thecond}   
\renewcommand{\theenumi}{(C\arabic{enumi})}
\renewcommand{\labelenumi}{\textbf{\theenumi}} 

\item \label{c.non-linmod} 
	Fix finite constants $C_0>0$ and $c_0>0$ and probability measures $\mathcal L_l$ and $\mathcal L_w$ on $(\R,\mathcal B(\R))$, such that $\mathcal L_w$ has mean zero, unit variance and finite fourth moment, $\int s^2 \mathcal L_l(ds) = 1$ and $\mathcal L_l((-c_0,c_0)) = 0$.

For every $n\in\N$, the class $\mathcal P_n = \mathcal P_n(\mathcal L_l, \mathcal L_w, C_0)$ consists of all probability measures $P$ on $\mathcal Z\subseteq\R^{p_n+1}$, such that the following hold: 
\begin{enumerate}
\item The $x_0$-marginal distribution of $P$ is given by 
$$P_{x_0}\; \stackrel{\mathcal L}{=}\; l_0 \Sigma_P^{1/2}(w_1,\dots, w_{p_n})',$$ 
where $w_1,\dots, w_{p_n}$ are i.i.d. according to $\mathcal L_w$, $l_0\stackrel{\mathcal L}{=} \mathcal L_l$ is independent of the $w_j$ and $\Sigma_P^{1/2}$ is the unique symmetric positive definite square root of a positive definite $p_n\times p_n$ covariance matrix $\Sigma_P$. 

\item The conditional distribution of the response given the regressors is
$$P_{y_0 \|x_0} \;\stackrel{\mathcal L}{=}\; \mu_P(x_0) + \sigma_Pv_{P},$$ 
where $v_P$ is independent of $x_0$ and has mean zero, unit variance and fourth moment bounded by $C_0$, where $\mu_P:\R^{p_n}\to \R$ is some measurable regression function with $\E_{x_0}[\mu_P(x_0)]=0$ and $\sigma_P\in(0,\infty)$.
\end{enumerate}
\stepcounter{cond}
\end{enumerate}

The model $\mathcal P_n$ in \ref{c.non-linmod} is non-parametric, because the regression function $\mu_P$ is unrestricted, up to being centered, and the independent error distribution is arbitrary, up to the requirements $\E_P[v_P] =0$, $\E_P[v_P^2]=1$ and $\E_P[v_P^4]\le C_0$.

To predict the value of $y_0$ from $x_0$ and a training sample $T_n = (x_i,y_i)_{i=1}^n$ with $n\ge p_n+2$, generated from $\P$, we consider linear predictors $\hat{\mu}_n(x_0)=x_0'\hat{\beta}_n(c)$, where $\hat{\beta}_n(c)$ is a James-Stein-type estimator given by
\begin{align*}
\hat{\beta}_n(c) \;=\;\begin{cases}
	 \left( 1 - \frac{cp_n\hat{\sigma}_n^2}{\hat{\beta}_n'X'X\hat{\beta}_n}\right)_+\hat{\beta}_n, \quad&\text{if } \hat{\beta}_n'X'X\hat{\beta}_n >0,\\
	 0, &\text{if } \hat{\beta}_n'X'X\hat{\beta}_n = 0,
 \end{cases}	 
\end{align*}
for a tuning parameter $c\in[0,1]$. Here $\hat{\beta}_n = (X'X)^\dagger X'Y$, $\hat{\sigma}_n^2 = \|Y-X\hat{\beta}_n\|_2^2/(n-p_n)$. The corresponding leave-one-out estimator $\hat{\beta}_n^{[i]}(c)$ is defined equivalently, but with $X$ and $Y$ replaced by $X_{[i]}$ and $Y_{[i]}$. Note that the leave-one-out equivalent of $\hat{\sigma}_n^2 = \hat{\sigma}_n^2(X,Y)$ is given by 
$$
\hat{\sigma}_{n,[i]}^2(X_{[i]},Y_{[i]}) = \hat{\sigma}_{n-1}^2(X_{[i]},Y_{[i]}) = \|Y_{[i]}-X_{[i]}\hat{\beta}_n^{[i]}\|_2^2/(n-1-p_n).
$$ 
The ordinary least squares estimator $\hat{\beta}_n$ belongs to the class of James-Stein estimators. In particular, $\hat{\beta}_n(0) = \hat{\beta}_n$, because, with $P_X:=X(X'X)^\dagger X'$, we have $\|P_XY\|_2^2 = \hat{\beta}_n'X'X\hat{\beta}_n=0$ if, and only if, $Y\in\s(P_X)^\perp=\s(X)^\perp$, and the latter clearly implies $\hat{\beta}_n=0$.

Using James-Stein type estimators for prediction is motivated, e.g., by the optimality results of \citet{Dicker13} and the discussion in \citet{Huber13}. The next result shows that in the model~\ref{c.non-linmod} with $p_n/n\to\kappa\in(0,1)$ and if the deviation from a linear model is not too severe, the James-Stein-type estimators are sufficiently stable and their estimation errors are uniformly bounded in probability, just as required in \eqref{eq:etanLin} and \eqref{eq:estErrorLin}.

\begin{theorem}\label{thm:JSmisspec}
For every $n\in\N$, let $P_n\in \mathcal P_n = \mathcal P_n(\mathcal L_l, \mathcal L_w, C_0)$ as in Condition~\ref{c.non-linmod} and suppose that under $P_n$, the error term $v_{P_n}$ in \ref{c.non-linmod} has a Lebesgue density. Define $\beta_n = \beta_{P_n}$ to be the minimizer of $\beta \mapsto \E_{(x_0,y_0)}[(y_0-\beta'x_0)^2]$ over $\R^{p_n}$. If $p_n/n\to \kappa \in [0,1)$, $c_n\in[0,1]$ for all $n\in\N$, and
\begin{align}\label{eq:JSMisspecBound}
\limsup_{n\to\infty} \E_{x_{0,n}}\left[\left(\frac{\mu_{P_n}(x_{0,n}) - x_{0,n}'\beta_n}{\sigma_{P_n}} \right)^2\right] \;<\;\infty,
\end{align}
then the positive part James-Stein estimator $\hat{\beta}_n(c_n)$ satisfies \eqref{eq:estErrorLin}, i.e.,
$$
\limsup_{n\to\infty} \P_n \left( \left\|\Sigma_{P_n}^{1/2}(\hat{\beta}_n(c_n) - \beta_n)/\sigma_{P_n}\right\|_2 > M \right) \quad\xrightarrow[M\to \infty]{} \quad 0.
$$
If, in addition, $\kappa>0$, then \eqref{eq:etanLin} is also satisfied, i.e., for every $\eps>0$,
$$
\P_n\left( \left\|\Sigma_{P_n}^{1/2}(\hat{\beta}_n(c_n) - \hat{\beta}_n^{[1]}(c_n))/\sigma_{P_n}\right\|_2 > \eps \right) \quad\xrightarrow[n\to \infty]{} \quad 0.
$$
\end{theorem}

\begin{remark} \normalfont
Under the assumptions of Theorem~\ref{thm:JSmisspec}, uniform asymptotic training conditional validity of the leave-one-out prediction interval follows, provided that, in addition, the errors $v_P$ in Condition~\ref{c.non-linmod} have uniformly bounded densities so that \eqref{eq:densBound} holds. To see this, note that \eqref{eq:etanLin} and \eqref{eq:estErrorLin} are conclusions of the theorem, that uniformly bounded fourth moment of the error implies $P_n$-uniform boundedness, such that \eqref{eq:errBound} is a consequence of assumption~\eqref{eq:JSMisspecBound}.
\end{remark}

\begin{remark} \normalfont
The last statement of Theorem~\ref{thm:JSmisspec} can also be established for the case $\kappa=0$ but would require a different proof strategy. Since this case is statistically less interesting we omit it for the sake of brevity.
\end{remark}

\subsection{Ordinary least squares and interval length}
\label{sec:PIlength}

We investigate the special case of the ordinary least squares predictor $\hat{\mu}_n(x) = x'\hat{\beta}_n = x'(X'X)^\dagger X'Y$ in some more detail, because here also the length 
$$
\left|PI_{\alpha_1,\alpha_2}^{(L1O)}\right| = \hat{q}_{\alpha_2} - \hat{q}_{\alpha_1},
$$ 
of the leave-one-out prediction interval (cf. \eqref{eq:kCVPI} with $k=n$) admits a reasonably simple asymptotic characterization. We consider a class $\mathcal P_n^{(lin)} = \mathcal P_n^{(lin)}(\mathcal L_l,\mathcal L_w, \mathcal L_v)$ which is a subset of the one of Condition~\ref{c.non-linmod}, with the additional assumption that the regression function $\mu_P$ is linear and that the error distribution is fixed (up to arbitrary scaling).

\begin{enumerate}
        \setlength\leftmargin{-20pt}
\setcounter{enumi}{\thecond}   
\renewcommand{\theenumi}{(C\arabic{enumi})}
\renewcommand{\labelenumi}{\textbf{\theenumi}} 

\item \label{c.linmod} 
	Fix a finite constant $c_0>0$ and probability measures $\mathcal L_l$, $\mathcal L_w$ and $\mathcal L_v$ on $(\R,\mathcal B(\R))$, such that $\mathcal L_w$ and $\mathcal L_v$ have mean zero, unit variance and finite fourth moment, $\int s^2 \mathcal L_l(ds) = 1$ and $\mathcal L_l((-c_0,c_0)) = 0$.

For every $n\in\N$, the class $\mathcal P_n^{(lin)} = \mathcal P_n^{(lin)}(\mathcal L_l, \mathcal L_w, \mathcal L_v)$ consists of all probability measures $P$ on $\mathcal Z\subseteq \R^{p_n+1}$, such that the following hold: 
\begin{enumerate}
\item The $x_0$-marginal distribution of $P$ is given by 
$$P_{x_0}\; \stackrel{\mathcal L}{=}\; l_0 \Sigma_P^{1/2}(w_1,\dots, w_{p_n})',$$ 
where $w_1,\dots, w_{p_n}$ are i.i.d. according to $\mathcal L_w$, $l_0\stackrel{\mathcal L}{=} \mathcal L_l$ is independent of the $w_j$ and $\Sigma_P^{1/2}$ is the unique symmetric positive definite square root of a positive definite $p_n\times p_n$ covariance matrix $\Sigma_P$. 

\item The conditional distribution of the response given the regressors is
$$P_{y_0 \|x_0} \;\stackrel{\mathcal L}{=}\; x_0'\beta_P + \sigma_Pv_{0},$$ 
where $v_0\stackrel{\mathcal L}{=} \mathcal L_v$ is independent of $x_0$, and where $\beta_P\in\R^{p_n}$ and $\sigma_P\in(0,\infty)$.
\end{enumerate}
\stepcounter{cond}
\end{enumerate}

Note that under \ref{c.linmod}, the distributions $\mathcal L_l$, $\mathcal L_w$ and $\mathcal L_v$ are fixed, so that $\mathcal P_n^{(lin)}$ is a parametric model indexed by $\beta_P$, $\Sigma_P$ and $\sigma_P$. However, these parameters may depend on sample size $n$, and the dimension $p_n$ of $\beta_P\in\R^{p_n}$ and of $\Sigma_P\in \R^{p_n\times p_n}$ may increase with $n$. Subsequently, we aim at uniformity in these parameters. 

\begin{theorem}\label{thm:PIlength}
Fix $\alpha\in[0,1]$. For every $n\in\N$, let $P_n\in \mathcal P_n$ with $\mathcal P_n = \mathcal P_n^{(lin)}(\mathcal L_l, \mathcal L_w, \mathcal L_v)$ as in \ref{c.linmod}. 
If $p_n/n\to \kappa \in (0,1)$ then the scaled empirical $\alpha$-quantile $\hat{q}_\alpha/\sigma_{P_n}$ of the leave-one-out residuals $\hat{u}_i = y_i - x_i'\hat{\beta}_n^{[i]}$ based on the OLS estimator $\hat{\beta}_n = (X'X)^\dagger X'Y$ converges in $P_n$-probability to the corresponding $\alpha$-quantile $q_\alpha$ of the asymptotic distribution of the scaled prediction error $(y_0-x_0'\hat{\beta}_n)/\sigma_{P_n}$. The latter is equal in distribution to
$$l N \tau + v,$$ 
where $l, N, \tau$ and $v$ are defined as follows:
$l\stackrel{\mathcal L}{=} \mathcal L_l$, $N\stackrel{\mathcal L}{=} \mathcal N(0,1)$, and $v\stackrel{\mathcal L}{=}\mathcal L_v$ are independent, and $\tau= \tau(\mathcal L_l,\kappa)$ is non-random. 

The same statement holds also for $\kappa=0$, provided that, in addition, $\mathcal L_v$ has a continuous and strictly increasing cdf and $p_n\to\infty$ as $n\to\infty$.

Here, the function $\kappa \mapsto \tau(\mathcal L_l, \kappa)\in [0,\infty)$ defined on $[0,1)$ has the following properties: For any $\mathcal L_l$ as in \ref{c.linmod}, $\tau(\mathcal L_l, \kappa)=0$ if, and only if, $\kappa=0$. If $\mathcal L_l(\{-1,1\})=1$, then $\tau(\mathcal L_l, \kappa) = \sqrt{\kappa/(1-\kappa)}$.
\end{theorem}

Theorem~\ref{thm:PIlength} shows how the length $\hat{q}_{\alpha_2} - \hat{q}_{\alpha_1}$ of the leave-one-out prediction interval for the OLS predictor depends (asymptotically) on $\mathcal L_l$, $\mathcal L_v$ and $\kappa = \lim_{n\to\infty} p_n/n$. For simplicity, let $\mathcal L_l(\{-1,1\}) = 1$ and consider an equal tailed interval, i.e., $\alpha_1 = \alpha/2 = 1-\alpha_2$. Figure~\ref{fig:PIlengths} shows asymptotic interval lengths as functions of $\kappa\in[0,1]$ for different values of error level $\alpha$ in the cases $\mathcal L_v = \text{Unif}\{-1,1\}$ and $\mathcal L_v = \mathcal N(0,1)$. For a wide range of $\kappa$ values ($\kappa\in[0,0.8]$), the interval length is almost constant. However, for high dimensional problems ($\kappa>0.8$) the interval length increases dramatically, as expected, because here the asymptotic estimation error $\tau=\sqrt{\kappa/(1-\kappa)}$ explodes. We also get an idea about the impact of the error distribution, on which the practitioner has no handle. In particular, for large error levels ($\alpha=0.6$) we even observe a non-monotonic dependence of the interval length on $\kappa$, which seems rather counterintuitive. This results from the non-monotonicity of $\tau^2 \mapsto IQR_\alpha(\mathcal N(0,\tau^2) * \mathcal L_v) = q_{1-\alpha/2}-q_{\alpha/2}$, where $*$ denotes convolution. This non-monotonicity may only occur if the error distribution $\mathcal L_v$ is not log-concave (e.g., the blue curve for $\alpha=0.6$ in Figure~\ref{fig:PIlengths}; cf. the discussion in Subsection~\ref{sec:efficiency} of the supplement). Finally, for large values of $\kappa$, and thus, for large values of $\tau$, the error distribution has little effect on the interval length, because in that case the term $N\tau$ dominates the distribution of $N\tau + v$.

The result of Theorem~\ref{thm:PIlength} can be intuitively understood as follows. If the true model $\mathcal P_n^{(lin)}$ is linear and satisfies \ref{c.linmod} then the scaled prediction error under $\P=P^{n+1}, P\in\mathcal P_n^{(lin)}$, is distributed as
$$
\frac{y_0- x_0'\hat{\beta}_n}{\sigma_P} \stackrel{\mathcal L}{=} l_0 (w_1,\dots, w_{p_n}) \Sigma_P^{1/2}(\beta_P-\hat{\beta}_n)/\sigma_P + v_0,
$$
and for $n$ large, $\|\Sigma_P^{1/2}(\beta_P-\hat{\beta}_n)/\sigma_P\|_2 \approx \tau$ is approximately non-random, so that $(w_1,\dots, w_{p_n}) \Sigma_P^{1/2}(\beta_P-\hat{\beta}_n)/\sigma_P \approx w_0' Z \tau$, where $Z:= \Sigma_P^{1/2}(\beta_P-\hat{\beta}_n)/\|\Sigma_P^{1/2}(\beta_P-\hat{\beta}_n)\|_2$ is a random unit vector that is independent of $w_0:=(w_1,\dots, w_{p_n})'$. Thus, if $p_n$ is large and $Z$ satisfies the Lyapounov condition $\|Z\|_{2+\delta}\to 0$, then $w_0'Z \approx \mathcal N(0,1)$ (see Lemma~\ref{lemma:UnifWeak}\eqref{l:UnifWeakT>0} in the supplement).
This effect of additional Gaussian noise in the prediction error was also observed by \citet{ElKaroui13, ElKaroui13b, ElKaroui15, ElKaroui18}. Note that the conditions $\|\Sigma_P^{1/2}(\beta_P-\hat{\beta}_n)/\sigma_P\|_2 \approx \tau$ and $\|Z\|_{2+\delta}\to0$ need not be satisfied by a given estimator $\hat{\beta}_n$. However, the former condition is always satisfied by the robust $M$-estimators
$$
\hat{\beta}_n^{(\rho)} = \argmin{b\in\R^p} \frac{1}{n}\sum_{i=1}^n \rho(y_i-x_i'b),
$$
considered in \citet{ElKaroui18, ElKaroui13} and under the model assumptions in that reference (cf. Subsection~\ref{sec:ElKaroui}). Here, $\rho:\R\to\R$ is an appropriate convex loss function. 
The Lyapounov condition $\|Z\|_{2+\delta}\to0$ is also satisfied by $\hat{\beta}_n^{(\rho)}$, provided that the standardized design vectors $\Sigma_P^{-1/2}x_i$ follow an orthogonally invariant distribution, because then one easily sees that under~\ref{c.linmod}
$$
\hat{\beta}_n^{(\rho)} \;=\; \beta_P + \Sigma_P^{-1/2}\tilde{\beta}_n^{(\rho)}
\;\stackrel{\mathcal L}{=}\; \beta_P + \|\tilde{\beta}_n^{(\rho)}\|_2 \Sigma_P^{-1/2}U, 
$$
where $\tilde{\beta}_n^{(\rho)} = \argmin{b\in\R^p} \frac{1}{n}\sum_{i=1}^n \rho(\sigma_Pv_i-x_i'\Sigma_P^{-1/2}b)$ and $U$ is uniformly distributed on the unit sphere and independent of $\|\tilde{\beta}_n^{(\rho)}\|_2 = \|\Sigma_P^{1/2}(\beta_P-\hat{\beta}_n^{(\rho)})\|_2$. 
Therefore, $\|Z\|_{2+\delta} \stackrel{\mathcal L}{=} \|U\|_{2+\delta} \stackrel{\mathcal L}{=} \left( \|V\|_{2+\delta}^{2+\delta}/\|V\|_2^{2(1+\delta/2)}\right)^{\frac{1}{2+\delta}}\to 0$, in probability, for $V\thicksim \mathcal N(0,I_{p_n})$, $p_n\to\infty$ as $n\to\infty$.
However, the Lyapounov property is not satisfied, e.g., by the James-Stein estimators (cf. Lemma~\ref{lemma:JSneg} in the supplement). If the mentioned conditions are not satisfied, much more complicated limiting distributions of the prediction error than the one of Theorem~\ref{thm:PIlength} may arise.

\begin{figure}[htbp]
\includegraphics[width=\textwidth]{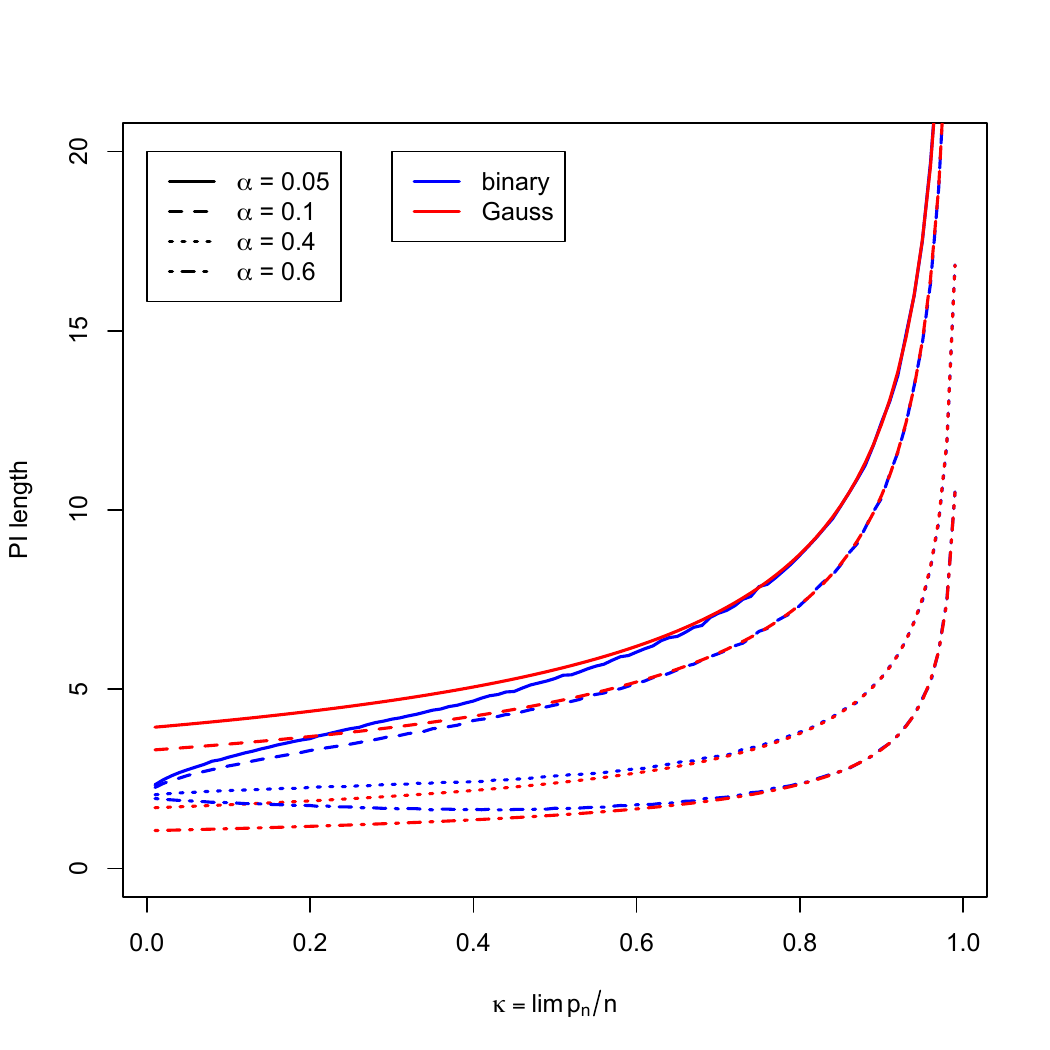} 
\caption{Lengths of leave-one-out prediction intervals as a function of $\kappa = \lim_{n\to\infty} p_n/n$ for confidence level $1-\alpha$ and with $\text{Unif}\{-1,1\}$ (binary) and $\mathcal N(0,1)$ (Gauss) errors.}
\label{fig:PIlengths}
\end{figure}

\subsection{Sample splitting}
\label{sec:SS}

An obvious alternative to the $k$-fold cross validation prediction interval \eqref{eq:kCVPI} is to use a sample splitting method as follows. Decide on a fraction $\nu\in(0,1)$ and use only a number $n_1 = \lceil \nu n\rceil$ of observation pairs $(x_i, y_i)$, $i\in S_\nu\subseteq \{1,\dots, n\}$, $|S_\nu|=n_1$, to compute an estimate $\hat{\mu}_{n_1}$. Now use the remaining $n-n_1$ observations to compute residuals $\hat{u}_i^{(\nu)} = y_i - \hat{\mu}_{n_1}(x_i)$, $i\in\{1,\dots, n\}\setminus S_\nu$. Since, conditionally on the observations corresponding to $S_\nu$, these residuals are i.i.d. and distributed as $y_0-\hat{\mu}_{n_1}(x_0)$, constructing a prediction interval of the form $[\hat{\mu}_{n_1}(x_0) + L, \hat{\mu}_{n_1}(x_0) + U]$ for $y_0$ is now equivalent to constructing a tolerance interval for $y_0-\hat{\mu}_{n_1}(x_0)$ based on i.i.d. observations with the same distribution. One can now simply use appropriate empirical quantiles $L=\hat{q}_{\alpha_1}^{(\nu)}$ and $U=\hat{q}_{\alpha_2}^{(\nu)}$ from the sample splitting residuals $\hat{u}_i^{(\nu)}$ (see also Subsection~\ref{sec:naive} in the supplement). Such a procedure is suggested and studied, e.g., by \citet{Vovk12} and \citet{Lei16}. Notice, however, that this prediction interval is not centered at the more accurate point predictor $\hat{\mu}_n$ that is computed from the full sample of size $n$.

In order to formally study the length of this sample splitting interval we restrict to the case of OLS estimation, i.e., $\hat{\mu}_{n_1}(x) = x'\hat{\beta}_{n_1}$, where $\hat{\beta}_{n_1}$ is the OLS estimator based on the sample corresponding to $S_\nu$. Note that in this case, the estimator will not be unique if $n_1< p_n$, so one usually requires $n_1\ge p_n$. Now, in view of Theorem~\ref{thm:PIlength}, the empirical quantiles of the residuals $\hat{u}_i^{(\nu)}$, $i\in S_\nu^c$, converge (unconditionally) to the quantiles of $l N \tau' + u$, where now $\tau'$ is the non-random limit of $\|\Sigma_P^{1/2}(\beta_P-\hat{\beta}_{n_1})/\sigma_P\|_2$. In particular, if $\mathcal L_l$ degenerates to $\{-1,1\}$, then $\tau' = \sqrt{\kappa'/(1-\kappa')}$, where $\kappa'=\lim_{n\to\infty} p_n/n_1 = \kappa/\nu$. Thus, we can read off the asymptotic interval length of the sample splitting procedure from Figure~\ref{fig:PIlengths} by simply adjusting the value of $\kappa$ to $\kappa/\nu$. For instance, in the binary error case with $\alpha=0.05$, if $\kappa=0.4$ and we use sample splitting with $\nu=1/2$, then $\kappa'=0.8$ and the asymptotic length of the leave-one-out prediction interval is about $4.7$, while the asymptotic length of the sample splitting interval is about $9$, so almost twice as wide.


\section{Asymptotically degenerate (non-random) estimators}
\label{sec:consistency}

Another important class of problems, where the conditions \eqref{eq:estError} and \eqref{eq:etan} of Subsection~\ref{sec:stability} are satisfied, are those where the estimator $\hat{\mu}_n$ (and those computed without the observations in the $l$-th fold $\hat{\mu}_n^{(l)}$) asymptotically degenerates to some non-random function which need not be the true regression function $\mu_P(x_0) = \E_P[y_0\|x_0]$. We point out that in the scenario considered in this section, the naive approach that tries to estimate the true unknown distribution of the errors $u_i = y_i - \mu_P(x_i)$ in the additive error model \ref{c.non-linmod} based on the ordinary residuals $y_i-\hat{\mu}_n(x_i)$ is often successful (asymptotically) for constructing training conditionally valid prediction intervals, provided that consistent estimation of $\mu_P$ is possible. This less challenging but more classical setting of asymptotically non-random predictors is an important test case for the CV method.
We still consider asymptotic results where the number of explanatory variables $p=p_n$ and the number of folds $k=k_n$ can grow with sample size $n$. Thus, we consider sequences $(p_n)_{n\in\N}$ and $(k_n)_{n\in\N}$, and a sequence $(P_n)_{n\in\N}$ of probability measures, where $P_n$ is supported on $\mathcal Z_n \subseteq \R^{p_n+1}$. Moreover, we have to slightly extend the usual definition of uniform consistency of an estimator sequence to cover also the estimates $\hat{\mu}_n^{(l)}$ omitting the $l$-th fold and to allow the possibility of an asymptotically non-vanishing bias.

\begin{definition}[Uniform Asymptotic Degeneracy (UAD)]
For every $n\in\N$, let $k_n, p_n\in\N$, let $P_n$ be a probability measure on $\mathcal Z_n$ and let $\sigma^2_n\in(0,\infty)$. We say that a sequence of predictors $\hat{\mu}_n$ is uniformly asymptotically degenerate (UAD) with respect to $(P_n)_{n\in\N}$ and relative to $(\sigma^2_n)_{n\in\N}$, if there exists measurable functions $g_n:\R^{p_n}\to\R$, such that for every $\eps>0$, 
\begin{align}
&|g_n(x_{0,n}) - \hat{\mu}_n(x_{0,n})| = o_{P_n}\left(\sigma_n\right)\quad\text{and} \label{eq:defCons:a}\\
&\frac{1}{k_n}\sum_{l=1}^{k_n}\E_n\left[\left(\frac{|g_n(x_{0,n}) - \hat{\mu}_n^{(l)}(x_{0,n})|}{\sigma_n}\right)\land 1\right] = o\left(1\right). \label{eq:defCons:b}
\end{align}
\end{definition}

In the classical case of consistent estimation, the function $g_n$ would be the true unknown regression function $g_n(x_{0,n})=\mu_{P_n}(x_{0,n})$ and the constant $\sigma_n^2$ can be thought of as the error variance $\sigma_n^2=\Var_n[y_{0,n}-\mu_{P_n}(x_{0,n})]$, if it exists, or can be chosen equal to $1$. 

It is convenient and common practice to choose the $k$-folds $K_1,\dots, K_k$ of approximately the same size. We consider the following convention: In case $n/k\in\N$, choose all folds of equal size $\frac{n}{k}$ and otherwise choose $\ell:= n-\lfloor \frac{n}{k}\rfloor k \in[k-1]$ and $m_1=m_2=\dots = m_\ell = \lfloor\frac{n}{k}\rfloor + 1$ and $m_{\ell+1}=\dots=m_k = \lfloor\frac{n}{k}\rfloor$. With this convention, the left hand side of \eqref{eq:defCons:b} can be written as
\begin{align*}
&\frac{\ell}{k_n}\E_n\left[\left(\frac{|g_n(x_{0,n}) - \hat{\mu}_n^{(1)}(x_{0,n})|}{\sigma_n}\right)\land 1\right]\\
&\quad\quad+
\frac{k_n-\ell}{k_n}\E_n\left[\left(\frac{|g_n(x_{0,n}) - \hat{\mu}_n^{(k_n)}(x_{0,n})|}{\sigma_n}\right)\land 1\right],
\end{align*}
irrespective of whether $\hat{\mu}_n$ is symmetric or not.
Hence, \eqref{eq:defCons:b} follows if
\begin{equation}\label{eq:defCons:c}
|g_n(x_{0,n}) - \hat{\mu}_n^{(1)}(x_{0,n})| + |g_n(x_{0,n}) - \hat{\mu}_n^{(k_n)}(x_{0,n})| = o_{P_n}\left(\sigma_n\right).
\end{equation}
Thus, under this construction it actually suffices to verify \eqref{eq:defCons:a} and \eqref{eq:defCons:c} to establish UAD. Furthermore, in many cases \eqref{eq:defCons:c} will even be equivalent to \eqref{eq:defCons:a}, provided that $n-m_1$ is growing sufficiently fast with $n$. 

It is easy to see that if $\hat{\mu}_n$ is UAD with respect to $(P_n)_{n\in\N}$ and relative to $(\sigma_n^2)_{n\in\N}$ and if \ref{c.density} holds, then the sequence of stability constants $\eta_{n,k_n}$ satisfies \eqref{eq:etan}, i.e.,
\begin{align*}
\eta_{n,k_n} &= \frac{1}{k_n}\sum_{l=1}^{k_n}\E_n\left[\left( \|f_{y_{0,n}\|x_{0,n}}\|_\infty |\hat{\mu}_n(x_{0,n}) - \hat{\mu}_n^{(l)}(x_{0,n})|\right)\land 1\right] 
\xrightarrow[n\to\infty]{} 0,
\end{align*}
provided that $\sigma_n \|f_{y_{0,n}\|x_{0,n}}\|_\infty = O_{P_n}(1)$. Simply use the inequality $(a+b)\land 1 \le (a\land1)+(b\land 1)$, for $a,b\ge0$. Also note that $\|\sigma_n f_{y_{0,n}\|x_{0,n}}\|_\infty$ is the sup-norm of the conditional density of the scaled error term $(y_{0,n} - g_n(x_{0,n}))/\sigma_n$ given $x_{0,n}$. 

In the remainder of this subsection we list a number of examples where the UAD property of $\hat{\mu}_n$ holds. Therefore (assuming \ref{c.density} and the boundedness conditions of Theorem~\ref{thm:densityAsymp}) also asymptotic training conditional validity of the $k$-CV prediction interval follows. We emphasize that the conditions on the statistical model $\mathcal P$, that are imposed in the subsequent examples, are taken from the respective reference and we do not claim that they are minimal.

\begin{example}[Non-parametric regression estimation]\label{ex:nonparametric}\normalfont
Consider a constant sequence of dimension parameters $p_n=p\in\N$. For positive finite constants $L$ and $C$, let $\mathcal P(L,C)$ denote the class of probability distributions $P$ on $\mathcal Z = \mathcal X\times \mathcal Y\subseteq\R^{p+1}$ such that $P_{y_0}(|y_0|\le L) = 1 = P_{x_0}(\|x_0\|_2\le L)$ and whose corresponding regression function $\mu_P:\R^p\to \R$ is $C$-Lipschitz, i.e., $|\mu_P(x_1) - \mu_P(x_2)| \le C\|x_1-x_2\|_2$ for all $x_1,x_2\in\mathcal X$. \citet[][Chapter~7]{Gyorfi02} show that if $\hat{\mu}_n$ is either an appropriate kernel estimate, a partitioning estimate or a nearest-neighbor estimate, all with fully data driven choice of tuning parameter, then, if $P_n\in\mathcal P(L,C)$,
\begin{align*}
\P_n(|\hat{\mu}_n(x_{0,n}) - \mu_{P_n}(x_{0,n})|>\eps) \xrightarrow[n\to\infty]{} 0
\end{align*}
for every $\eps>0$. Thus \eqref{eq:defCons:a} holds with $g_n=\mu_{P_n}$ and $\sigma_n=1$, and under the (approximate) equal fold size construction mentioned above, \eqref{eq:defCons:c} also holds provided $n-m_1\to\infty$, because here $p_n=p$ is constant, and UAD follows.
\end{example}

\begin{example}[Deep neural networks]\label{ex:DNN}\normalfont
\citet{SchmidtH20} considers a nonparametric regression model similar to \ref{c.non-linmod}, but with $\sigma_P=1$ and $v_P\thicksim \mathcal N(0,1)$. The distribution of the regressors $x_i$ is supported on $\mathcal X=[0,1]^p$ but is otherwise unrestricted. He assumes a smooth compositional structure for the true unknown regression function, that is, $\mu_P \in \mathcal G(q,d,t,\beta,K)$, where
\begin{align*}
\mathcal G(q,d,t,\beta,K) =
\Big\{ 
f = &g_q\circ\dots\circ g_0 \Big| g_i : [a_i,b_i]^{d_i} \to [a_{i+1}, b_{i+1}]^{d_{i+1}}, \\
&\quad g_i\in \mathcal C_{t_i}^{\beta_i}([a_i,b_i]^{t_i},K), \text{ for some } |a_i|, |b_i| \le K
\Big\},
\end{align*}
$q\in\N$, $d = (d_0,\dots, d_{q+1})'\in\N^{q+2}$, $d_0=p$, $d_{q+1}=1$, $t = (t_0,\dots, t_q)'\in\N^{q+1}$, $t_i\le d_i$, for $i\le q+1$, $\beta = (\beta_0,\dots, \beta_q)'\in(0,\infty)^{q+1}$, $\mathcal C_{t_i}^{\beta_i}([a_i,b_i]^{t_i},K)$ is the set of H\"older functions on $[a_i,b_i]^{t_i}$ with H\"older-exponent $\beta_i$ and H\"older norm bounded by $K$, and it is implicitly assumed that $g_i$ depends only on $t_i$ of its $d_i$ possible input variables. Consider a predictor $\hat{\mu}_n$ that is obtained by empirical risk minimization over a certain class $\mathcal F(L, (p_i)_{i=0}^{L+1}, s, F)$ of $F$-bounded (i.e., $\|\hat{\mu}_n\|_\infty\le F$) ReLU networks with $L+1$ layers where the $i$-th layer has output dimension $p_i$, $p_0=p$ and $p_{L+1}=1$, and which have bounded and $s$-sparse network parameters. If the tuning parameters $L, (p_i)_{i=0}^{L+1}, s$ and $F$ are chosen appropriately, and if, in particular, the network depth $L = L_n$ is logarithmically growing with $n$, \citet[][Corollary~1]{SchmidtH20} shows that 
$$
\E\left[\left(\hat{\mu}_n(x_0) - \mu_P(x_0)\right)^2\right] \le C'L_n \phi_n \log^2n,
$$
where $C' = C'_p$ does not depend on $n$ (but on $p$), 
$$
\phi_n = \max_{i=0,\dots, q} n^{-\frac{2\beta_i^*}{2\beta_i^*+t_i}}
$$
and $\beta_i^*= \beta_i\prod_{\ell=i+1}^q (\beta_\ell\land1)$. Thus, \eqref{eq:defCons:a} and \eqref{eq:defCons:b} are satisfied with $g_n = \mu_{P_n}$ and $\sigma_n=1$, provided that $C_{p_n}'L_n \phi_{n^*} \log^2n^* = o(1)$, where $n^* = n - \max_l m_l$ is the number of observed feature-response pairs excluding the ones in the largest fold. 
\end{example}

\begin{example}[Random forests]\label{ex:randForest}\normalfont
\citet{Scornet15} consider an additive regression model similar to \ref{c.non-linmod}, but with $v_P\thicksim\mathcal N(0,1)$, $P_{x_0}$ the uniform distribution on $[0,1]^p$ and additive regression function
$$
\mu(x) = \mu_P(x) =  \sum_{j=1}^p \mu_j(x^{(j)}),\quad x=(x^{(1)},\dots, x^{(p)})'\in\R^p,
$$
where the $\mu_j:[0,1]\to\R$ are continuous. A random forest predictor $\hat{\mu}_n$ is trained in the following way: Each tree is grown by first randomly subsampling a number of $a_n\in[n]$ feature-response pairs from the training set and then growing a tree according to the CART-split criterion \citep[see][Section~2, for details]{Scornet15}. The tree growing process is terminated when a number of $t_n\in[n]$ leaves is reached. In this way, $M$ random regression trees are grown, each of which predicts $y_0$ by averaging all the $y_i$ whose features $x_i$ belong to the leaf containing $x_0$. \citet{Scornet15} study the idealized predictor $\hat{\mu}_n(x_0)$ obtained from averaging these $M$ predictions and letting $M\to\infty$. They show that if $a_n,t_n\to \infty$ in such a way that $t_n(\log a_n)^9/a_n\to 0$, we have for every fixed data generating distribution $P$ as above,
$$
\E[(\hat{\mu}_n(x_0)-\mu(x_0))^2] \to 0,\quad\text{as } n\to\infty.
$$
Thus \eqref{eq:defCons:a} holds, and under the (approximate) equal fold size construction mentioned above, \eqref{eq:defCons:c} also holds provided $n-m_1\to\infty$, because here $p_n=p$ is constant.
\end{example}

\begin{example}[High-dimensional linear regression with the LASSO]\label{ex:LASSO}\normalfont

Consider a non-decreasing sequence $(K_n)_{n\in\N}$ of positive real numbers and a sequence of dimension parameters $(p_n)_{n\in\N}$ such that $K_n^4\log(p_n)/n\to 0$ as $n\to\infty$. For a positive finite constant $M$, let $\mathcal P_n(M)$ denote the class of probability distributions on $\R^{p_n+1}$, such that  under $P\in\mathcal P_n(M)$, the pair $(x_0,y_0)$ has the following properties: 
\begin{itemize}
\item $\|x_0\|_\infty \le M$, almost surely.
\item Conditional on $x_0$, $y_0$ is distributed as $\mathcal N(x_0'\beta_P, \sigma_P^2)$, for some $\beta_P\in\R^{p_n}$ and $\sigma_P^2\in(0,\infty)$.
\item The parameters $\beta_P$ and $\sigma_P^2$ satisfy $\max(\|\beta_P\|_1, \sigma_P)\le K_n$.
\end{itemize}
In particular, we have $\mu_P(x_0) = x_0'\beta_P$.
\citet[][Theorem~1]{Chatt13} shows that any estimate $\hat{\beta}_{n,K_n}$ which minimizes
$$
\beta \quad\mapsto \quad \sum_{i=1}^n (y_i - \beta'x_i)^2 \quad \text{subject to }\quad \|\beta\|_1\le K_n
$$
satisfies
\begin{align*}
\P_n\left(\left| x_{0,n}'\hat{\beta}_{n,K_n} - \mu_{P_n}(x_{0,n})\right|>\eps\right) \xrightarrow[n\to\infty]{} 0
\end{align*}
for every $\eps>0$ and if $P_n\in\mathcal P_n(M)$. Clearly, here the estimate $\hat{\beta}^{(1)}_{n,K_n}$ excluding the data from the first fold has the same asymptotic property, provided that $K_{n-m_1}^4\log p_n / (n-m_1) \to 0$, with $m_1=\lfloor\frac{n}{k_n}\rfloor$. Thus, under the (approximate) equal fold size construction, UAD holds. Note that in this example, consistent estimation of the parameters $\beta_P$ and $\sigma_P^2$ would require additional assumptions on the distribution of the feature vector $x_0$ \citep[so called `compatibility conditions', see][]{Buehlmann11}, and therefore, it is not immediately clear whether the usual Gaussian prediction interval, based on estimates $\hat{\beta}_n$ and $\hat{\sigma}_n^2$ and a Gaussian quantile, is asymptotically valid in the present setting. Furthermore, the result of \citet{Chatt13} can be extended also to the non-Gaussian case, where the usual Gaussian prediction interval certainly fails.
\end{example}

\begin{example}[Ridge regression with many variables]\label{ex:RIDGE}\normalfont
 A qualitatively different parameter space is considered in \citet{Lopes15}, who shows uniform consistency of ridge regularized estimators in a linear model under a boundedness assumption on the regression parameter $\beta_P$ and a specific decay rate of eigenvalues of $\E_{x_0}[x_0x_0']$. 
\end{example}

\begin{example}[Misspecified regression estimation]\normalfont
A classical strand of literature on the asymptotics of Maximum-Likelihood under misspecification has established various conditions under which the MLE is not consistent for the true unknown parameter, but for a pseudo parameter that corresponds to the projection of the true data generating distribution onto the maintained working model. See, for example, \citet{Huber67}, \citet{White80a,White80b} or \citet{Fahrmeir90}. A common pseudo target in random design regression is the minimizer of $\beta\mapsto\E_{z_0}[(y_0-\beta'x_0)^2]$.
\end{example}


\section{Numerical results}
\label{sec:mc}

We conduct an extensive simulation study to assess the quality of our theoretical approximations in a small sample situation. For our numerical experiments we closely follow the setup of \citet{Barber19b}. We compute prediction intervals with nominal coverage probability of $1-\alpha = 0.9$ using i.i.d. training samples of different sizes $n$ and $p=n$ explanatory variables. Each data point $(x_i,y_i)$ is generated as
$$
x_i \thicksim \mathcal N(0,I_p)\quad\text{and}\quad y_i\|x_i \thicksim \mathcal N(\beta'x_i,1),\quad i=1,\dots, n.
$$
We also generate $100$ test data points from the same distribution.
The true coefficient vector $\beta$ is drawn once for each parameter configuration as $\beta=\sqrt{10}\cdot u$ for a uniform random unit vector $u$ in $\R^p$ and is then fixed throughout the Monte Carlo iterations. Instead of the least squares estimator (or the minimum norm interpolator in case $p>n$) studied in the simulation section~7.1 of \citet{Barber19b}, which was exemplified to be unstable in the present regime of $n\approx p$ and results in anti-conservative performance, we here investigate the LASSO as our prediction algorithm. For simplicity and reproducibility we take the default implementation of the LASSO in the R-package \emph{glmnet} \citep{Friedman10} with data driven choice of tuning parameter determined by $5$-fold cross validation\footnote{\emph{cv.glmnet(x=X, y=Y, family="gaussian", intercept=FALSE, nfolds=5, alpha=1)} in \emph{glmnet} version~4.1-2.}. 

First, we try to experimentally confirm our theoretical prediction that the cross-validation prediction intervals are asymptotically training conditionally valid even in scenarios where no consistent parameter estimation is possible. For simplicity, we consider only leave-one-out cross validation ($k=n$). Recall that in the challenging scenario where $n=p$, stability of the OLS predictor fails. Notice also that the condition $\|x_0\|_\infty\le M$ for consistency of the LASSO predictor of Example~\ref{ex:LASSO} is not satisfied in the current setup. For each $n$, using the $100$ test data points, we compute the empirical probability of coverage. For one fixed training sample $T_n$, this gives us a Monte Carlo estimate of the conditional coverage probability 
$\P(y_0\in PI_{\alpha}^{(L1O)}(T_n,x_0) \| T_n)$. In Figure~\ref{fig:Asymp} we show histograms of these $100$ Monte Carlo estimates, their sample means (black solid lines) and the nominal coverage level $1-\alpha = 0.9$ (red dashed lines). For small samples we observe an anti-conservative bias as well as a considerable variability in the true conditional coverage probabilities, both of which are reduced as sample size $n$ increases, even though $p=n$ increases as well.

In Figure~\ref{fig:Asymp-Cov-Len-Stab} we show average coverage (i.e., Monte Carlo estimates of the unconditional coverage probabilities $\P(y_0\in PI_{\alpha}(T_n,x_0))$), average length (i.e., Monte Carlo estimates of the expected length) and a Monte Carlo estimate $\hat{\eta}_{n,k}$ of the stability coefficient $\eta_{n,k}$ (cf. Definition~\ref{DEF:STABLE}) as functions of $n$ (here, still, $n=k=p$). We estimate the stability of the predictor $\hat{\mu}_ n$ on a given training sample $T_n$, by
$$
\hat{\eta}_{n,k}(T_n) = \frac1k\sum_{l=1}^k \frac{1}{100}\sum_{j=1}^{100}\left[\left(\frac{1}{\sqrt{2\pi}}\left|\hat{\mu}_n(x_{0,j}) - \hat{\mu}_n^{(l)}(x_{0,j})\right|\right) \land 1 \right], 
$$
where $x_{0,j}$, $j=1,\dots, 100$ are the regressors in the test set, and then average over the $100$ Monte Carlo samples $T_n$ to approximate $\eta_{n,k}$. As we already saw in Figure~\ref{fig:Asymp} (black bold vertical lines), Figure~\ref{fig:Asymp-Cov-Len-Stab} also shows that the unconditional coverage probabilities increase towards the nominal level with increasing $n$ and $p$. Simultaneously, the expected interval lengths and the stability coefficients decrease with increasing $n$. Notice that a feasible but trivial $1-\alpha$ prediction (or tolerance) interval for $y_0$ in the present scenario is $TI_\alpha = \pm q_{1-\alpha/2}^N\cdot \sigma$, with $\sigma^2 = \Var[y_0] = \|\beta\|_2^2 + 1 = 11$, which has length $2 \cdot q_{0.95}^N\cdot \sqrt{11} = 10.91$. This is well outperformed by the leave-one-out prediction interval even in small samples. For reference, the oracle prediction interval for $y_0$ which uses knowledge of the true coefficient vector $\beta$, has length $2 \cdot q_{0.95}^N = 3.29$.

\begin{figure}[tbp]
\includegraphics[width=\textwidth]{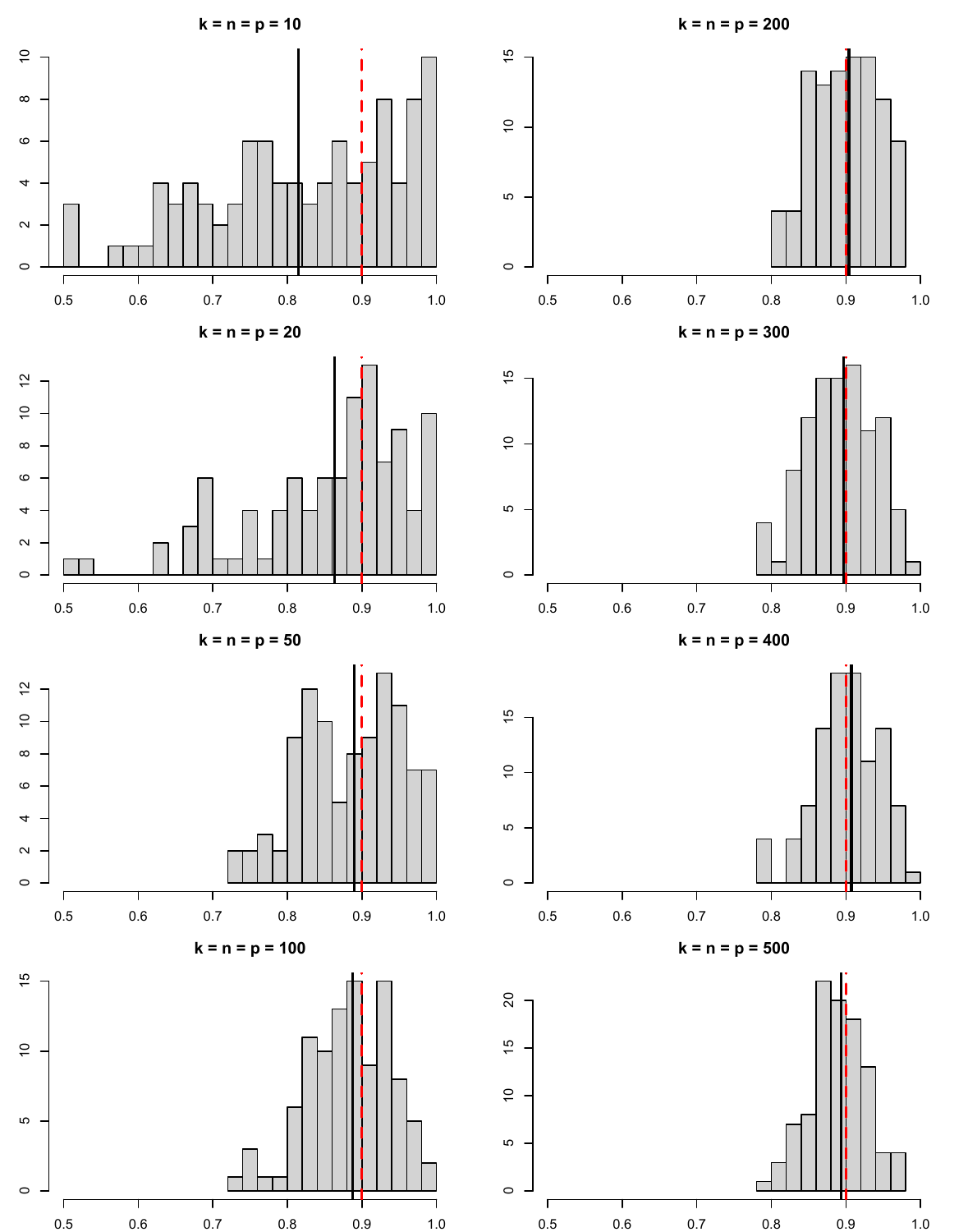} 
\caption{Histograms of conditional coverage probabilities $\P(y_0\in PI^{(L1O)}_{\alpha}(T_n,x_0) \| T_n)$ from $100$ Monte Carlo training samples $T_n$ for different values of $n$ at confidence level of $1-\alpha = 0.9$ (red dashed line). Here $k=p=n$.}
\label{fig:Asymp}
\end{figure}

\begin{figure}[tbp]
\includegraphics[width=\textwidth]{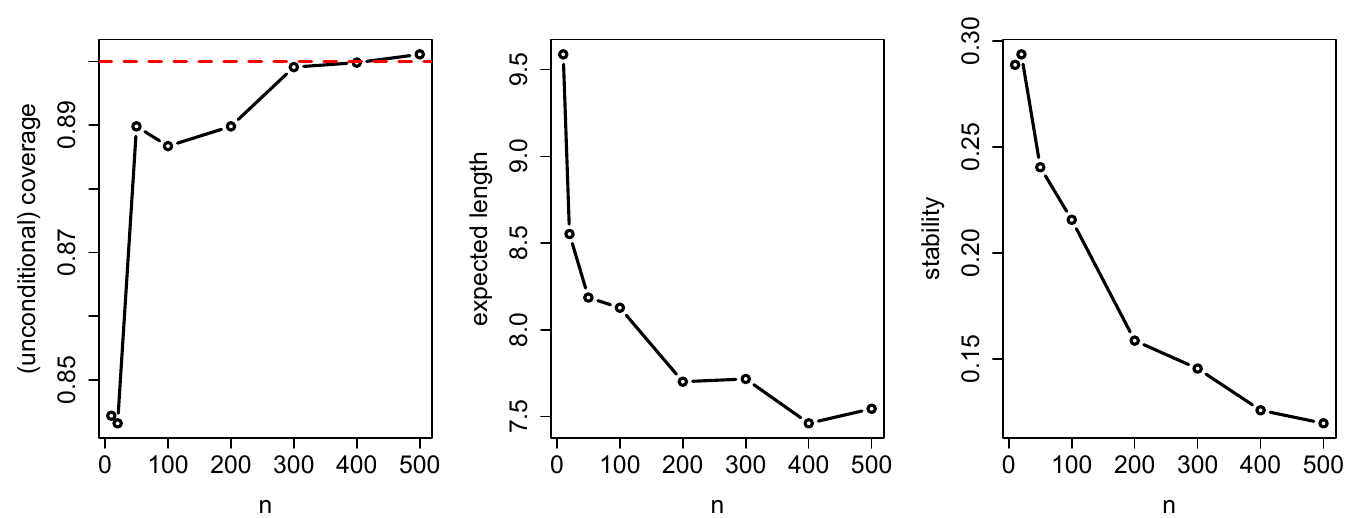} 
\caption{Unconditional coverage probability, expected interval length and estimated stability for different sample sizes $n$ from $100$ Monte Carlo simulations with $k=n=p$ (i.e., leave-one-out).}
\label{fig:Asymp-Cov-Len-Stab}
\end{figure}

Next, we investigate the impact of the number of folds $k$ on the $k$-CV prediction interval. We compute \eqref{eq:kCVPI} for a range of $k$ from $2$ up to sample size $n$. In Figure~\ref{fig:Hists-np100-HOM} we plot histograms of conditional coverage probabilities for different values of $k$, in the case $n=p=100$. We find that for small values of $k$, most training samples produce conservative prediction intervals, whereas for larger $k$ the conditional coverages are more symmetrically spread around the nominal level with a slight anti-conservative bias (as was already observed in Figure~\ref{fig:Asymp}). The number of folds $k$ does not seem to have a strong impact on the spread of theses conditional probabilities. The conservative behavior for small $k$ can be explained in a similar way as for the case of sample splitting (cf. Subsection~\ref{sec:SS} and Subsections~\ref{sec:efficiency} and \ref{sec:naive} in the supplement). In a nutshell, if $k$ is small, then $\hat{\mu}_n^{(l)}$, $l=1,\dots, k$, will be based on much fewer observations than $\hat{\mu}_n$ and will thus be less accurate for prediction, leading to residuals that will have much larger variance than the true prediction error $y_0-\hat{\mu}_n(x_0)$. Hence, the resulting prediction interval will be too wide. This reasoning is also supported by Figure~\ref{fig:Cov-Len-Stab}.

\begin{figure}[tbp]
\includegraphics[width=\textwidth]{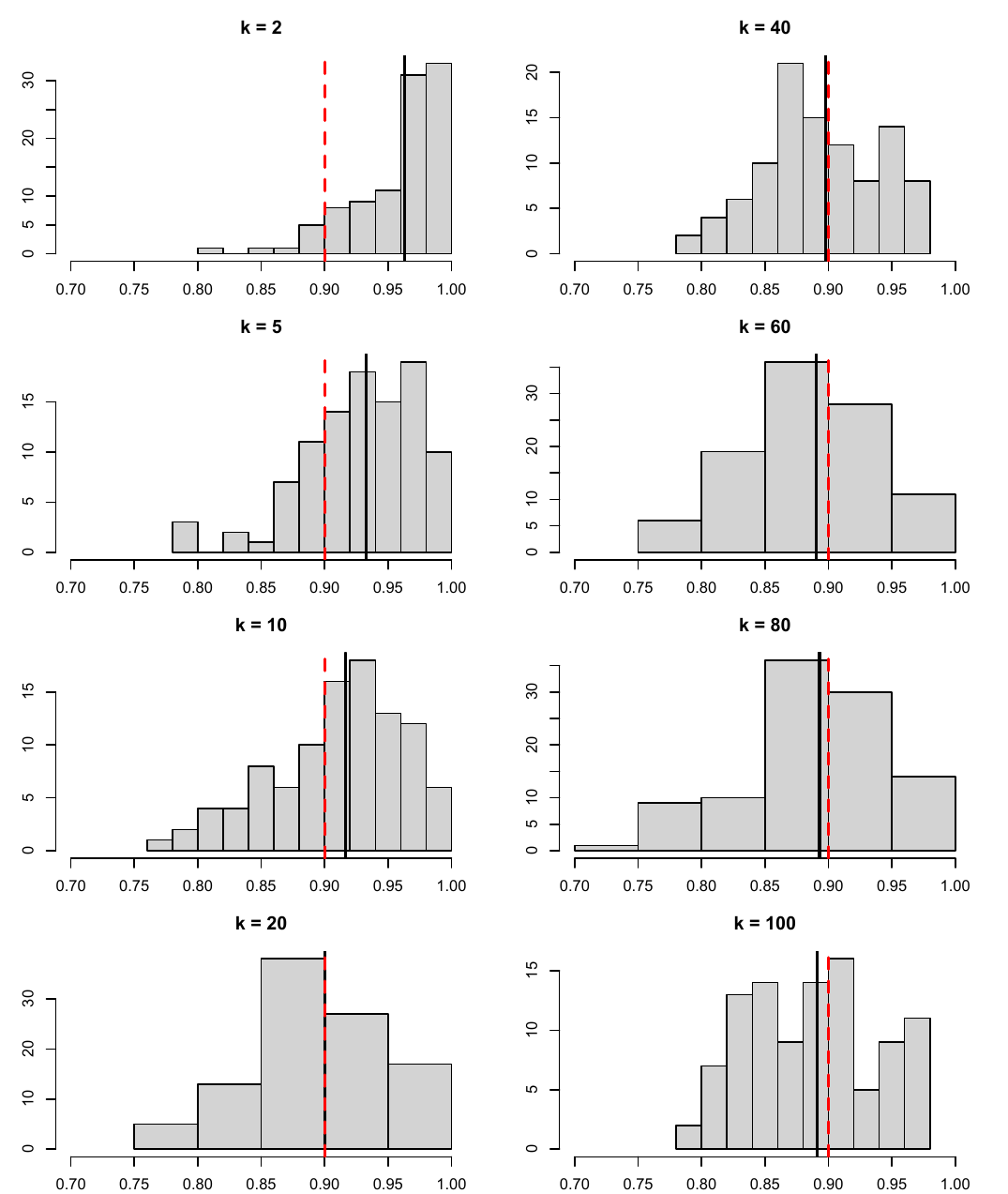} 
\caption{Histograms of conditional coverage probabilities $\P(y_0\in PI^{(kCV)}_{\alpha}(T_n,x_0) \| T_n)$ from $100$ Monte Carlo training samples $T_n$ for different values of $k$  at confidence level of $1-\alpha = 0.9$ (red dashed line). Here $n=p=100$.}
\label{fig:Hists-np100-HOM}
\end{figure}

%
%

From Figure~\ref{fig:Cov-Len-Stab} we learn that coverage probabilities, expected interval lengths and stability all decrease with increasing $k$ in a quite similar fashion. This was to be expected for the stability, because for nearly equal fold sizes, the number of observations in each fold decreases as $k$ increases, which means that, on average, $|\hat{\mu}_n(x_0) - \hat{\mu}_n^{(l)}(x_0)|$ will decrease for each $l$, as the two predictors are computed on nearly the same observations. We also see in Figure~\ref{fig:Cov-Len-Stab} that apparently for smaller values of $k$ below $n/2$, the $k$-CV prediction interval is too wide, as we observe both, large interval lengths as well as over-coverage (cf. the discussion in the previous paragraph). On the other hand, there does not seem to be any practical reason to choose $k$ substantially larger than $n/2$, as there is no benefit in terms of validity or interval length for increasing the number of folds above that threshold. However, keep in mind that the runtime of the $k$-CV prediction interval increases (roughly) linearly in $k$ (cf. Remark~\ref{rem:computation} in the supplement). 

\begin{figure}[tbp]
\includegraphics[width=\textwidth]{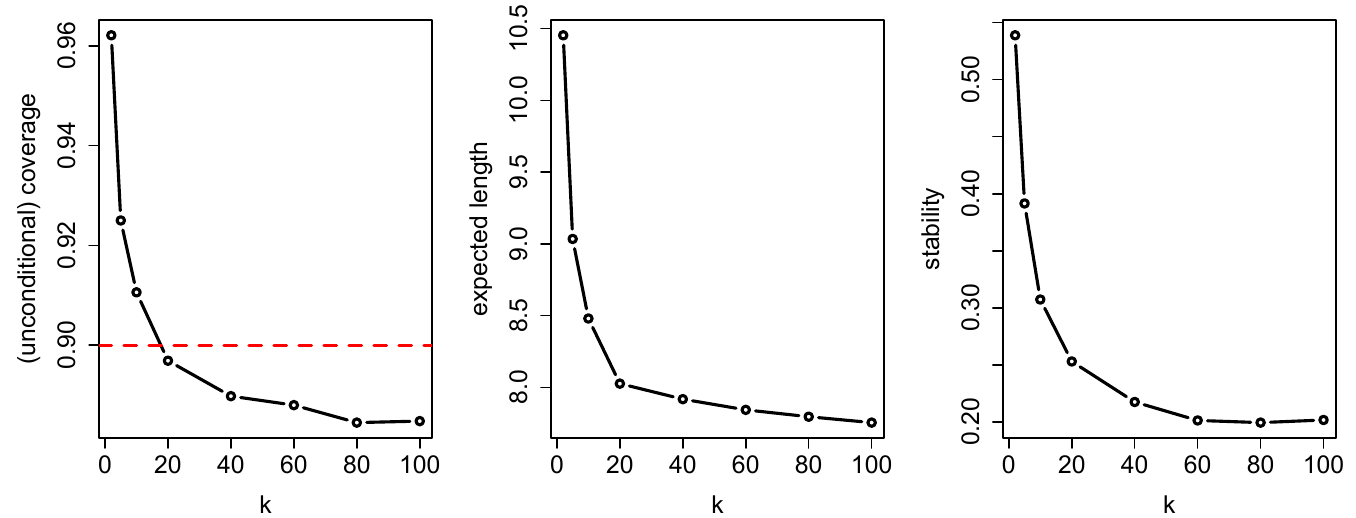} 
\caption{Unconditional coverage probability, expected interval length and estimated stability for different numbers of folds $k$ from $100$ Monte Carlo simulations with $n=p=100$.}
\label{fig:Cov-Len-Stab}
\end{figure}

Finally, we repeated the simulations for heteroskedastic $y_i$ generated as
$$
y_i\|x_i \thicksim \mathcal N\left(\beta'x_i,\sigma^2(x_i)\right), \quad \sigma^2(x_i) = 1+\frac{|\beta'x_i|}{\sqrt{10}} - \frac{\sqrt{2}}{\Gamma(\frac12)}.
$$
For comparability we have chosen the conditional variance of $y_i$ such that $\E[\sigma^2(x_i)] = 1$ in order to achieve $\Var(y_i) = 11$, as in the homoskedastic case. The resulting plots are very similar to those of the homoskedastic case and are deferred to Section~\ref{sec:mcApp} in the supplement. For heteroskedastic data we observe slightly larger spread of conditional probabilities around the nominal level, slightly wider intervals and slightly slower asymptotic convergence of average (unconditional) coverage to the nominal level.

\section*{Acknowledgements}

The authors thank the participants of the ``ISOR Research Seminar in Statistics and Econometrics" at the University of Vienna for discussion of an early version of the paper. In particular, we want to thank Benedikt M. P\"otscher and David Preinerstorfer for valuable comments. We are also grateful to three anonymous referees and an associate editor for their constructive feedback to improve the paper.

{\small
\bibliographystyle{chicago}
\bibliography{PredInter.bbl}

\begin{thebibliography}{}

\bibitem[\protect\citeauthoryear{Aven}{Aven}{1985}]{Aven85}
Aven, T. (1985).
\newblock Upper (lower) bounds on the mean of the maximum (minimum) of a number
  of random variables.
\newblock {\em J. Appl. Probab.\/}~{\em 22\/}(3), 723--728.

\bibitem[\protect\citeauthoryear{Bai and Silverstein}{Bai and
  Silverstein}{2010}]{BaiSilv10}
Bai, Z. and J.~W. Silverstein (2010).
\newblock {\em Spectral Analysis of Large Dimensional Random Matrices\/} (2nd
  ed.).
\newblock Springer Series in Statistics. New York: Springer.

\bibitem[\protect\citeauthoryear{Bai and Yin}{Bai and Yin}{1993}]{Bai93}
Bai, Z.~D. and Y.~Q. Yin (1993).
\newblock Limit of the smallest eigenvalue of a large dimensional sample
  covariance matrix.
\newblock {\em Ann. Probab.\/}~{\em 21\/}(3), 1275--1294.

\bibitem[\protect\citeauthoryear{Barber, Candes, Ramdas, and Tibshirani}{Barber
  et~al.}{2021}]{Barber19b}
Barber, R.~F., E.~J. Candes, A.~Ramdas, and R.~J. Tibshirani (2021).
\newblock Predictive inference with the jackknife+.
\newblock {\em Ann. Statist.\/}~{\em 49\/}(1), 486--507.

\bibitem[\protect\citeauthoryear{Bean, Bickel, El~Karoui, and Yu}{Bean
  et~al.}{2013}]{Bean13}
Bean, D., P.~J. Bickel, N.~El~Karoui, and B.~Yu (2013).
\newblock Optimal m-estimation in high-dimensional regression.
\newblock {\em Proc. Natl. Acad. Sci. USA\/}~{\em 110\/}(36), 14563--14568.

\bibitem[\protect\citeauthoryear{Bickel and Freedman}{Bickel and
  Freedman}{1983}]{Bickel83}
Bickel, P.~J. and D.~A. Freedman (1983).
\newblock Bootstrapping regression models with many parameters.
\newblock In P.~Bickel, K.~Doksum, and J.~Hodges (Eds.), {\em A Festschrift for
  Erich L. Lehmann}, pp.\  28--48. Wadsworth Inc.

\bibitem[\protect\citeauthoryear{Billingsley}{Billingsley}{1995}]{Billingsley95}
Billingsley, P. (1995).
\newblock {\em Probability and Measure\/} (3rd ed.).
\newblock New York: Wiley.

\bibitem[\protect\citeauthoryear{Bousquet and Elisseeff}{Bousquet and
  Elisseeff}{2002}]{Bousquet02}
Bousquet, O. and A.~Elisseeff (2002).
\newblock Stability and generalization.
\newblock {\em J. Mach. Learn. Res.\/}~{\em 2}, 499--526.

\bibitem[\protect\citeauthoryear{Bucchianico, Einmahl, and
  Mushkudiani}{Bucchianico et~al.}{2001}]{Bucc01}
Bucchianico, A.~D., J.~H.~J. Einmahl, and N.~A. Mushkudiani (2001).
\newblock Smallest nonparametric tolerance regions.
\newblock {\em Ann. Statist.\/}~{\em 29\/}(5), 1320--1343.

\bibitem[\protect\citeauthoryear{B\"{u}hlmann and van~de Geer}{B\"{u}hlmann and
  van~de Geer}{2011}]{Buehlmann11}
B\"{u}hlmann, P. and S.~van~de Geer (2011).
\newblock {\em Statistics for High-dimensional Data}.
\newblock Berlin: Springer.

\bibitem[\protect\citeauthoryear{Butler and Rothman}{Butler and
  Rothman}{1980}]{ButlerRoth80}
Butler, R. and E.~D. Rothman (1980).
\newblock Predictive intervals based on reuse of the sample.
\newblock {\em J. Amer. Statist. Assoc.\/}~{\em 75\/}(372), 881--889.

\bibitem[\protect\citeauthoryear{Chatterjee}{Chatterjee}{2013}]{Chatt13}
Chatterjee, S. (2013).
\newblock Assumptionless consistency of the lasso.
\newblock {\em arXiv preprint arXiv:1303.5817\/}.

\bibitem[\protect\citeauthoryear{Chatterjee and Patra}{Chatterjee and
  Patra}{1980}]{Chatterjee80}
Chatterjee, S.~K. and N.~K. Patra (1980).
\newblock Asymptotically minimal multivariate tolerance sets.
\newblock {\em Calcutta Statist. Assoc. Bull.\/}~{\em 29\/}(1-2), 73--94.

\bibitem[\protect\citeauthoryear{Chen, Chun, and Barber}{Chen
  et~al.}{2018}]{Chen17}
Chen, W., K.-J. Chun, and R.~F. Barber (2018).
\newblock Discretized conformal prediction for efficient distribution-free
  inference.
\newblock {\em Stat\/}~{\em 7\/}(1), e173.

\bibitem[\protect\citeauthoryear{Cox and Hinkley}{Cox and
  Hinkley}{1974}]{Cox74}
Cox, D.~R. and D.~V. Hinkley (1974).
\newblock {\em Theoretical Statistics}.
\newblock London: Chapman and Hall.

\bibitem[\protect\citeauthoryear{Devroye and Wagner}{Devroye and
  Wagner}{1979}]{Devroye79}
Devroye, L. and T.~J. Wagner (1979).
\newblock Distribution-free inequalities for the deleted and holdout error
  estimates.
\newblock {\em IEEE Trans. Inform. Theory\/}~{\em 25\/}(2), 202--207.

\bibitem[\protect\citeauthoryear{Dicker}{Dicker}{2012}]{Dicker12}
Dicker, L. (2012).
\newblock Optimal estimation and prediction for dense signals in
  high-dimensional linear models.
\newblock {\em arXiv:1203.4572\/}.

\bibitem[\protect\citeauthoryear{Dicker}{Dicker}{2013}]{Dicker13}
Dicker, L.~H. (2013).
\newblock Optimal equivariant prediction for high-dimensional linear models
  with arbitrary predictor covariance.
\newblock {\em Electron. J. Stat.\/}~{\em 7}, 1806--1834.

\bibitem[\protect\citeauthoryear{El~Karoui}{El~Karoui}{2010}]{ElKaroui10}
El~Karoui, N. (2010).
\newblock The spectrum of kernel random matrices.
\newblock {\em Ann. Statist.\/}~{\em 38\/}(1), 1--50.

\bibitem[\protect\citeauthoryear{El~Karoui}{El~Karoui}{2013}]{ElKaroui13}
El~Karoui, N. (2013).
\newblock Asymptotic behavior of unregularized and ridge-regularized
  high-dimensional robust regression estimators: rigorous results.
\newblock {\em arXiv preprint arXiv:1311.2445\/}.

\bibitem[\protect\citeauthoryear{El~Karoui}{El~Karoui}{2018}]{ElKaroui18}
El~Karoui, N. (2018).
\newblock On the impact of predictor geometry on the performance on
  high-dimensional ridge-regularized generalized robust regression estimators.
\newblock {\em Probab. Theory Relat. Fields\/}~{\em 170\/}(1-2), 95--175.

\bibitem[\protect\citeauthoryear{El~Karoui, Bean, Bickel, Lim, and
  Yu}{El~Karoui et~al.}{2013}]{ElKaroui13b}
El~Karoui, N., D.~Bean, P.~J. Bickel, C.~Lim, and B.~Yu (2013).
\newblock On robust regression with high-dimensional predictors.
\newblock {\em Proc. Natl. Acad. Sci. USA\/}~{\em 110\/}(36), 14557--14562.

\bibitem[\protect\citeauthoryear{El~Karoui and Purdom}{El~Karoui and
  Purdom}{2018}]{ElKaroui15}
El~Karoui, N. and E.~Purdom (2018).
\newblock Can we trust the bootstrap in high-dimensions? the case of linear
  models.
\newblock {\em J. Mach. Learn. Res.\/}~{\em 19\/}(1), 170--235.

\bibitem[\protect\citeauthoryear{Fahrmeir}{Fahrmeir}{1990}]{Fahrmeir90}
Fahrmeir, L. (1990).
\newblock Maximum likelihood estimation in misspecified generalized linear
  models.
\newblock {\em Statistics\/}~{\em 21\/}(4), 487--502.

\bibitem[\protect\citeauthoryear{Foygel~Barber, Candes, Ramdas, and
  Tibshirani}{Foygel~Barber et~al.}{2021}]{Barber19}
Foygel~Barber, R., E.~J. Candes, A.~Ramdas, and R.~J. Tibshirani (2021).
\newblock The limits of distribution-free conditional predictive inference.
\newblock {\em Information and Inference: A Journal of the IMA\/}~{\em
  10\/}(2), 455--482.

\bibitem[\protect\citeauthoryear{Friedman, Hastie, and Tibshirani}{Friedman
  et~al.}{2010}]{Friedman10}
Friedman, J., T.~Hastie, and R.~Tibshirani (2010).
\newblock Regularization paths for generalized linear models via coordinate
  descent.
\newblock {\em Journal of Statistical Software\/}~{\em 33\/}(1), 1--22.

\bibitem[\protect\citeauthoryear{Gy\"{o}rfi, Kohler, Krzy\.{z}ak, and
  Walk}{Gy\"{o}rfi et~al.}{2002}]{Gyorfi02}
Gy\"{o}rfi, L., M.~Kohler, A.~Krzy\.{z}ak, and H.~Walk (2002).
\newblock {\em {A} {D}istribution-{F}ree {T}heory of {N}onparametric
  {R}egression}.
\newblock Springer Series in Statistics. New York: Springer.

\bibitem[\protect\citeauthoryear{Huber and Leeb}{Huber and
  Leeb}{2013}]{Huber13}
Huber, N. and H.~Leeb (2013).
\newblock Shrinkage estimators for prediction out-of-sample: Conditional
  performance.
\newblock {\em Commun. Statist. - Theory Methods\/}~{\em 42\/}(7), 1246--1264.

\bibitem[\protect\citeauthoryear{Huber}{Huber}{1967}]{Huber67}
Huber, P.~J. (1967).
\newblock The behavior of maximum likelihood estimates under nonstandard
  conditions.
\newblock {\em Proceedings of the Fifth Berkeley Symposium on Mathematical
  Statistics and Probability\/}~{\em 1}, 221--233.

\bibitem[\protect\citeauthoryear{Krishnamoorthy and Mathew}{Krishnamoorthy and
  Mathew}{2009}]{Krish09}
Krishnamoorthy, K. and T.~Mathew (2009).
\newblock {\em Statistical Tolerance Regions: Theory, Applications, and
  Computation}.
\newblock Wiley Series in Probability and Statistics. Hoboken, New Jersey:
  Wiley.

\bibitem[\protect\citeauthoryear{Lei}{Lei}{2019}]{Lei17}
Lei, J. (2019).
\newblock Fast exact conformalization of the lasso using piecewise linear
  homotopy.
\newblock {\em Biometrika\/}~{\em 106\/}(4), 749--764.

\bibitem[\protect\citeauthoryear{Lei, G'Sell, Rinaldo, Tibshirani, and
  Wasserman}{Lei et~al.}{2018}]{Lei16}
Lei, J., M.~G'Sell, A.~Rinaldo, R.~J. Tibshirani, and L.~Wasserman (2018).
\newblock Distribution-free predictive inference for regression.
\newblock {\em J. Amer. Statist. Assoc.\/}~{\em 113\/}(523), 1094--1111.

\bibitem[\protect\citeauthoryear{Lei, Robins, and Wasserman}{Lei
  et~al.}{2013}]{Lei13}
Lei, J., J.~Robins, and L.~Wasserman (2013).
\newblock Distribution-free prediction sets.
\newblock {\em J. Amer. Statist. Assoc.\/}~{\em 108\/}(501), 278--287.

\bibitem[\protect\citeauthoryear{Lei and Wasserman}{Lei and
  Wasserman}{2014}]{Lei14}
Lei, J. and L.~Wasserman (2014).
\newblock Distribution-free prediction bands for non-parametric regression.
\newblock {\em J. Roy. Statist. Soc. Ser. B\/}~{\em 76}, 71--96.

\bibitem[\protect\citeauthoryear{Lewis and Thompson}{Lewis and
  Thompson}{1981}]{Lewis81}
Lewis, T. and J.~W. Thompson (1981).
\newblock Dispersive distributions, and the connection between dispersivity and
  strong unimodality.
\newblock {\em J. Appl. Probab.\/}~{\em 18\/}(1), 76--90.

\bibitem[\protect\citeauthoryear{Li and Liu}{Li and Liu}{2008}]{Li08}
Li, J. and R.~Y. Liu (2008).
\newblock Multivariate spacings based on data depth: I. construction of
  nonparametric multivariate tolerance regions.
\newblock {\em Ann. Statist.\/}~{\em 36\/}(3), 1299--1323.

\bibitem[\protect\citeauthoryear{Lopes}{Lopes}{2015}]{Lopes15}
Lopes, M.~E. (2015).
\newblock {\em Some Inference Problems in High-Dimensional Linear Models}.
\newblock Ph.\ D. thesis, UC Berkeley.

\bibitem[\protect\citeauthoryear{Mammen}{Mammen}{1996}]{Mammen96}
Mammen, E. (1996).
\newblock Empirical process of residuals for high-dimensional linear models.
\newblock {\em Ann. Statist.\/}~{\em 24\/}(1), 307--335.

\bibitem[\protect\citeauthoryear{Massart}{Massart}{1990}]{Massart90}
Massart, P. (1990).
\newblock The tight constant in the {D}voretzky-{K}iefer-{W}olfowitz
  inequality.
\newblock {\em Ann. Probab.\/}~{\em 18\/}(3), 1269--1283.

\bibitem[\protect\citeauthoryear{Olive}{Olive}{2007}]{Olive07}
Olive, D.~J. (2007).
\newblock Prediction intervals for regression models.
\newblock {\em Comput. Statist. Data Anal.\/}~{\em 51\/}(6), 3115--3122.

\bibitem[\protect\citeauthoryear{Politis}{Politis}{2013}]{Politis13}
Politis, D.~N. (2013).
\newblock Model-free model-fitting and predictive distributions.
\newblock {\em Test\/}~{\em 22\/}(2), 183--221.

\bibitem[\protect\citeauthoryear{Romano, Patterson, and Candes}{Romano
  et~al.}{2019}]{Romano19}
Romano, Y., E.~Patterson, and E.~Candes (2019).
\newblock Conformalized quantile regression.
\newblock {\em Adv. Neural Inf. Process. Syst.\/}~{\em 32}.

\bibitem[\protect\citeauthoryear{Schmidt-Hieber}{Schmidt-Hieber}{2020}]{SchmidtH20}
Schmidt-Hieber, J. (2020).
\newblock Nonparametric regression using deep neural networks with relu
  activation function.
\newblock {\em Ann. Statist.\/}~{\em 48\/}(4), 1875--1897.

\bibitem[\protect\citeauthoryear{Schmoyer}{Schmoyer}{1992}]{Schmoyer92}
Schmoyer, R.~L. (1992).
\newblock Asymptotically valid prediction intervals for linear models.
\newblock {\em Technometrics\/}~{\em 34\/}(4), 399--408.

\bibitem[\protect\citeauthoryear{Scornet, Biau, and Vert}{Scornet
  et~al.}{2015}]{Scornet15}
Scornet, E., G.~Biau, and J.-P. Vert (2015).
\newblock Consistency of random forests.
\newblock {\em Ann. Statist.\/}~{\em 43\/}(4), 1716--1741.

\bibitem[\protect\citeauthoryear{Steinberger and Leeb}{Steinberger and
  Leeb}{2016}]{Steinb16d}
Steinberger, L. and H.~Leeb (2016).
\newblock Leave-one-out prediction intervals in linear regression models with
  many variables.
\newblock {\em arXiv preprint arXiv:1602.05801\/}.

\bibitem[\protect\citeauthoryear{Steinberger and Leeb}{Steinberger and
  Leeb}{2021}]{Steinb21}
Steinberger, L. and H.~Leeb (2021).
\newblock Conditional predictive inference for high-dimensional stable
  algorithms.
\newblock {\em arXiv preprint arXiv:1809.01412\/}.

\bibitem[\protect\citeauthoryear{Steinberger and Leeb}{Steinberger and
  Leeb}{2022}]{Steinb22supp}
Steinberger, L. and H.~Leeb (2022).
\newblock Supplement to ``{C}onditional predictive inference for stable
  algorithms''.

\bibitem[\protect\citeauthoryear{Stine}{Stine}{1985}]{Stine85}
Stine, R.~A. (1985).
\newblock Bootstrap prediction intervals for regression.
\newblock {\em J. Amer. Statist. Assoc.\/}~{\em 80\/}(392), 1026--1031.

\bibitem[\protect\citeauthoryear{Tukey}{Tukey}{1947}]{Tukey47}
Tukey, J.~W. (1947).
\newblock Non-parametric estimation ii. statistically equivalent blocks and
  tolerance regions -- the continuous case.
\newblock {\em Ann. Math. Statist.\/}~{\em 18\/}(4), 529--539.

\bibitem[\protect\citeauthoryear{van~der Vaart}{van~der
  Vaart}{2007}]{vanderVaart07}
van~der Vaart, A.~W. (2007).
\newblock {\em Asymptotic Statistics\/} (8th ed.).
\newblock Cambridge Series in Statistical and Probabilistic Mathematics. New
  York: Cambridge University Press.

\bibitem[\protect\citeauthoryear{Vovk}{Vovk}{2012}]{Vovk12}
Vovk, V. (2012).
\newblock Conditional validity of inductive conformal predictors.
\newblock In {\em Asian conference on machine learning}, pp.\  475--490.

\bibitem[\protect\citeauthoryear{Vovk}{Vovk}{2013}]{Vovk13}
Vovk, V. (2013).
\newblock Conditional validity of inductive conformal predictors.
\newblock {\em Machine Learning\/}~{\em 92\/}(2), 349--376.

\bibitem[\protect\citeauthoryear{Vovk, Gammerman, and Saunders}{Vovk
  et~al.}{1999}]{Vovk99}
Vovk, V., A.~Gammerman, and C.~Saunders (1999).
\newblock Machine-learning applications of algorithmic randomness.
\newblock In I.~Bratko and S.~Dzeroski (Eds.), {\em Proceedings of the
  Sixteenth International Conference on Machine Learning}, ICML '99, pp.\
  444--453. Morgan Kaufmann Publishers Inc.

\bibitem[\protect\citeauthoryear{Vovk, Gammerman, and Shafer}{Vovk
  et~al.}{2005}]{Vovk05}
Vovk, V., A.~Gammerman, and G.~Shafer (2005).
\newblock {\em Algorithmic Learning in a Random World}.
\newblock New York: Springer.

\bibitem[\protect\citeauthoryear{Vovk, Nouretdinov, and Gammerman}{Vovk
  et~al.}{2009}]{Vovk09}
Vovk, V., I.~Nouretdinov, and A.~Gammerman (2009).
\newblock On-line predictive linear regression.
\newblock {\em Ann. Statist.\/}~{\em 37\/}(3), 1566--1590.

\bibitem[\protect\citeauthoryear{Wald}{Wald}{1943}]{Wald43}
Wald, A. (1943).
\newblock An extension of {W}ilks' method for setting tolerance limits.
\newblock {\em Ann. Math. Statist.\/}~{\em 14\/}(1), 45--55.

\bibitem[\protect\citeauthoryear{White}{White}{1980a}]{White80b}
White, H. (1980a).
\newblock A heteroskedasticity-consistent covariance matrix estimator and a
  direct test for heteroskedasticity.
\newblock {\em Econometrica\/}~{\em 48\/}(4), 817--838.

\bibitem[\protect\citeauthoryear{White}{White}{1980b}]{White80a}
White, H. (1980b).
\newblock Using least squares to approximate unknown regression functions.
\newblock {\em Internat. Econom. Rev.\/}~{\em 21\/}(1), 149--170.

\bibitem[\protect\citeauthoryear{Wilks}{Wilks}{1941}]{Wilks41}
Wilks, S.~S. (1941).
\newblock Determination of sample sizes for setting tolerance limits.
\newblock {\em Ann. Math. Statist.\/}~{\em 12\/}(1), 91--96.

\bibitem[\protect\citeauthoryear{Wilks}{Wilks}{1942}]{Wilks42}
Wilks, S.~S. (1942).
\newblock Statistical prediction with special reference to the problem of
  tolerance limits.
\newblock {\em Ann. Math. Statist.\/}~{\em 13\/}(4), 400--409.

\end{thebibliography}
}

\begin{appendix}

\section{Discussion and further remarks}
\label{sec:discussion}

In this section we collect several further thoughts on the $k$-CV prediction intervals. We discuss some properties of the proposed method that we have established above but which we believe hold in much higher generality. We also draw some further connections to other methods such as sample splitting, tolerance regions and prediction regions based on non-parametric density estimation, and we provide further intuition. Finally, we sketch possible extensions and open problems.

\subsection{Predictor efficiency and interval length}
\label{sec:efficiency}

Recall that if $T_n\in\mathcal Z^n$ and $P$ are such that 
$$
s\mapsto\tilde{F}_n(s;T_n) = \P(y_0-\hat{\mu}_n(x_0)\le s\|T_n),
$$ 
is continuous, the optimal infeasible interval 
$$
PI_{\alpha_1,\alpha_2}^{(OPT)} = \hat{\mu}_n(x_0) + [\tilde{q}_{\alpha_1}, \tilde{q}_{\alpha_2}]
$$ 
in \eqref{eq:OPTPI} is the shortest interval of the form $\hat{\mu}_n(x_0) + [L(T_n), U(T_n)]$ such that \eqref{eq:alpha1} and \eqref{eq:alpha2} are satisfied. In this infeasible scenario and for given error probabilities $\alpha_1<\alpha_2$, the only way in which the data analyst can influence the length of $PI_{\alpha_1,\alpha_2}^{(OPT)}$ is via the choice of predictor $\hat{\mu}_n$. This choice clearly affects the conditional distribution $\tilde{F}_n$ of the prediction error $y_0-\hat{\mu}_n(x_0)$ and, thus, potentially its inter-quantile-range $\tilde{q}_{\alpha_2} - \tilde{q}_{\alpha_1}$, which is the length of the infeasible prediction interval. Since we only care about minimizing the inter-quantile-range of the conditional distribution $\tilde{F}_n$, for the rest of this subsection we consider the training data $T_n$ to be fixed and non-random. Thus, the predictor $\hat{\mu}_n:\R^{p}\to \R$ is also non-random. Now we would like to use a predictor $\hat{\mu}_n$ such that the prediction error $y_0-\hat{\mu}_n(x_0)$ has short inter-quantile-range. For simplicity, assume that $y_0 = \mu_P(x_0) + u_0$, where the error term $u_0$ has mean zero and is independent of the features $x_0$. Therefore, the prediction error is given by 
$$
y_0 - \hat{\mu}_n(x_0) \;=\; \mu_P(x_0)-\hat{\mu}_n(x_0) \;+\; u_0,
$$
i.e., the sum of the estimation error $\mu_P(x_0)-\hat{\mu}_n(x_0)$ and the innovation $u_0$. Following \citet{Lewis81}, we say that a continuous univariate distribution $P_1$ is more dispersed than $P_0$ if, and only if, any two quantiles of $P_1$ are further apart than the corresponding quantiles of $P_0$. Now we note that minimizing the inter-quantile-range of the prediction error $y_0-\hat{\mu}_n(x_0)$ is, in general, not equivalent to minimizing the inter-quantile-range of $\mu_P(x_0)-\hat{\mu}_n(x_0)$, because of the effect of the error term $u_0$. However, if the distribution of the error term $u_0$ has a log-concave density, then the distribution of $y_0-\hat{\mu}_{n,1}(x_0)$ is more dispersed than that of $y_0-\hat{\mu}_{n,0}(x_0)$, if, and only if, $\mu_P(x_0)-\hat{\mu}_{n,1}(x_0)$ is more dispersed than $\mu_P(x_0)-\hat{\mu}_{n,0}(x_0)$ \citep[see Theorem~8 of][]{Lewis81}. Thus, under log-concave error distributions, the interval length of $PI_{\alpha_1,\alpha_2}^{(OPT)}$ is directly related to the prediction accuracy of the point predictor $\hat{\mu}_n$ in use. 
These considerations naturally carry over to the feasible analog $PI_{\alpha_1,\alpha_2}^{(kCV)}$ defined in \eqref{eq:kCVPI}.
In Subsection~\ref{sec:PIlength}, in the special case of a linear model and ordinary-least-squares prediction, we have discussed the issue of interval length in some more detail and provided a rigorous description of the asymptotic interval length in a high-dimensional regime.
This sheds more light on the connection between the length of $PI_{\alpha_1,\alpha_2}^{(L1O)}$ and the estimation error $\mu_P(x_0) - \hat{\mu}_n(x_0)$. However, the lessons learned from the linear model appear to be valid in a much more general situation. In particular, we see that, at least for log-concave error distributions, the lengths of $k$-CV prediction intervals can potentially be used to evaluate the relative efficiency of competing predictors.

\subsection{The case of a naive predictor and sample splitting}
\label{sec:naive}

Next, we discuss the important special case where we naively decide to work with a predictor $M_{n,p}(T_n,x_0) = \mu(x_0)$, $\mu : \mathcal X\to\R$, that does not depend on the training data $T_n$ at all.\footnote{Note that this covers, in particular, the case where we do not even use, or do not have available, the feature vectors $x_0, \dots, x_n$, i.e., $\mu\equiv 0$. In this case, a \emph{prediction interval} for $y_0$ that is only based on $y_1,\dots, y_n$ is more commonly referred to as a \emph{tolerance interval}.} In this case, the predictor and its $k$-fold CV analogs all coincide and the ($k$-CV) residuals $\hat{u}_i = y_i-\mu(x_i)$ for $i=1,\dots, n$, are actually independent and identically distributed according to the non-random distribution $\tilde{F}_n(s) = \P(y_0-\mu(x_0)\le s \| T_n) = \P_{z_0}(y_0-\mu(x_0)\le s)$ and $\hat{F}_n$ is their empirical distribution function. Therefore, by Proposition~\ref{prop:basic} and the Dvoretzky-Kiefer-Wolfowitz (DKW) inequality \citep{Massart90}, if $\tilde{F}_n$ is continuous, we get for every $\eps>0$ that
\begin{align*}
\P\left(
\left| P_{z_0} \left( y_0 \in PI_{\alpha_1,\alpha_2}^{(kCV)}(T_n,x_0) \right) \;-\; (\alpha_2-\alpha_1)\right|  > \eps
\right)
\;\le\;
4\exp\left(-\frac{n\eps^2}{2}\right).
\end{align*}
Integrating this tail probability also yields
\begin{align*}
\E\left[
\left| P_{z_0} \left( y_0 \in PI_{\alpha_1,\alpha_2}^{(kCV)}(T_n,x_0) \right) \;-\; (\alpha_2-\alpha_1)\right|  
\right]
\;\le\;
\sqrt{\frac{8\pi}{n}}.
\end{align*}
We also point out that in the present case where the predictor does not depend on $T_n$, the problem of constructing a prediction interval for $y_0$ can actually be reduced to finding a non-parametric univariate tolerance interval for $y_0-\mu(x_0)$ based on the i.i.d. copies $(y_i-\mu(x_i))_{i=1}^n$. For this problem classical solutions are available, based on the theory of order statistics of i.i.d. data \citep[cf.][Chapter~8]{Krish09}. Unfortunately, the problem changes dramatically, once we try to learn the true regression function $\mu_P$ from the training data $T_n$ and use $\hat{\mu}_n(x_0) = M_{n,p}(T_n,x_0)$ to predict $y_0$, because then the $k$-CV residuals are no longer independent and the conditional distribution function $\tilde{F}$ of the prediction error $y_0-\hat{\mu}_n(x_0)$ given $T_n$ is random. Thus, in the general case we can not expect to obtain equally powerful and elegant results as above and we can not resort to the theory of order statistics of i.i.d. data. In particular, we note that the bound of Theorem~\ref{thm:density} is still somewhat sub-optimal in this trivial case where the estimator does not depend on the training sample $T_n$. In that case, $\eta_{n,k}=0$, but the derived bound still depends on the distribution of the estimation error $\mu_P(x_0) - \mu(x_0)$, even though in that case the alternative bound obtained above by the DKW inequality does no longer involve the estimation error. It is an open problem to establish a concentration inequality for $\|\hat{F}_n-\tilde{F}_n\|_\infty$ analogous to the DKW inequality but in the general case of dependent $k$-CV residuals and random $\tilde{F}_n$.

The discussion of the previous paragraph also applies to the case where the predictor $\mu$ was obtained as an estimator for $\mu_P$, but from another independent training sample $S_{n^*} = (x_j^*, y_j^*)_{j=1}^{n^*}$ of $n^*$ i.i.d. copies of $(x_0,y_0)$. This situation can be seen as a sample splitting method, where $n^*$ of the overall $n+n^*$ observations are used to compute the point predictor $\mu=\hat{\mu}_{n^*}$ and the remaining $n$ observations in $T_n$ are used as a validation set to estimate the conditional distribution of the prediction error $y_0 - \hat{\mu}_{n^*}(x_0)$ given $S_{n^*}$ (and $T_n$), from the (conditionally on $S_{n^*}$) i.i.d. residuals $y_i-\hat{\mu}_{n^*}(x_i)$, $i=1,\dots, n$. Such a procedure is discussed, for instance, by \citet{Lei16} and \citet{Vovk12}. Note that under the assumptions of the previous paragraph, such a method is asymptotically training conditionally valid if the size $n$ of the validation set diverges to infinity. However, this method uses only $n^*$ of the $n+n^*$ available observation pairs for prediction, such that the point predictor $\hat{\mu}_{n^*}$ based on $S_{n^*}$ is not as efficient as the analogous predictor based on the full sample $S_{n^*}\cup T_n$. This typically results in a larger prediction interval than necessary, because then the conditional distribution of the prediction error $y_0 - \hat{\mu}_{n^*}(x_0)$ is usually more dispersed than that of $y_0 - \hat{\mu}_{n^*+n}(x_0)$. See also the discussion in Subsections~\ref{sec:SS} and \ref{sec:efficiency}.


\subsection{Further remarks}

\begin{remark}[On exact training conditional validity]\normalfont
\label{rem:no-exact-validity}
Suppose that the class $\mathcal P$ contains at least the data generating distributions $P_0$ and $P_1$, where for $j\in\{0,1\}$
$$
P_j \;=\; \mathcal N_{p+1}(0,\sigma_j^2 I_{p+1}), \quad \sigma_j^2>0, \;\sigma_0^2\ne\sigma_1^2,
$$
that is, the $p$ feature variables in $x_0$ and the response $y_0$ are i.i.d. $\mathcal N(0,\sigma_j^2)$. Further, suppose that we decide to predict $y_0$ by some linear predictor $\hat{\mu}_n(x_0)=x_0'\hat{\beta}_n$ for some estimator $\hat{\beta}_n = \hat{\beta}_n(T_n)$. We shall show that for every $\alpha\in(0,1/2)$, it is impossible to construct a prediction interval of the form $PI_\alpha(T_n,x_0) = x_0'\hat{\beta}_n + [L_\alpha(T_n), U_\alpha(T_n)]$ based on a finite sample $T_n$ and $x_0$, such that \eqref{eq:valid} is equal to zero.
\begin{proof}
If \eqref{eq:valid} is equal to zero, then for both $j=0,1$ and $P_j^n$-almost all samples $T_n\in \mathcal Z^n$, where $P_j^n$ is the $n$-fold product measure of $P_j$, 
\begin{align*}
1-\alpha &= P_j(y_0\in PI_\alpha(T_n,x_0)) \\
&=
P_j(L_\alpha(T_n) \le y_0 - x_0'\hat{\beta}_n(T_n) \le U_\alpha(T_n)\|T_n) \\
&= \Phi\left( \frac{U_\alpha(T_n)}{\sigma_j\sqrt{\|\hat{\beta}_n\|_2^2+1}} \right) - \Phi\left( \frac{L_\alpha(T_n)}{\sigma_j\sqrt{\|\hat{\beta}_n\|_2^2+1}} \right).
\end{align*}
Since $1-\alpha>1/2$, we must have $L_\alpha < 0 < U_\alpha$, almost surely, and it is easy to see that the function 
$$
g_{l,u}(\nu) := 
\Phi\left( \frac{u}{\nu} \right) - \Phi\left( \frac{l}{\nu} \right), \quad g_{l,u} : (0,\infty) \to (0,1),
$$
is continuous and strictly decreasing, provided that $l<0<u$, and thus, for such $l$ and $u$, $g_{l,u}$ is invertible. Therefore, for $j=0,1$ and for $P_j^{n}$-almost all samples $T_n$ (equivalently, for Lebesgue almost all $T_n\in \R^{n(p+1)}$), we have
$$
\tilde{\sigma}_n^2(T_n) := \left(\frac{g_{L_\alpha, U_\alpha}^{-1}(1-\alpha)}{\sqrt{\|\hat{\beta}_n\|_2^2+1}}\right)^2 = \sigma_j^2.
$$
In other words, there exists $T_n\in\mathcal Z^n$, such that $\sigma_0^2=\tilde{\sigma}^2_n(T_n) = \sigma_1^2$, a contradiction.
\end{proof}
\end{remark}

\begin{remark}\normalfont\label{rem:Dicker}
Consistent estimation of the true regression function $\mu_P:\mathcal X\to \R$ from an i.i.d. sample of size $n$ is usually not possible if the dimension $p$ of $\mathcal X$ is non-negligible compared to $n$, even if $\mu_P$ is very regular and the error is homoskedastic and has rapidly decaying tails. For example, in a Gaussian linear model where the only unknown parameter is the $p$-vector $\beta$ of regression coefficients, it is impossible to consistently estimate the conditional mean $\mu_P(x_0)=\E_{z_0}[y_0\|x_0] = \beta'x_0$, unless $p/n\to0$ or strong assumptions are imposed on the parameter space \citep[cf.][]{Dicker12}. 
\end{remark}

\begin{remark}\normalfont\label{rem:ObjValid}
A natural approach for constructing non-parametric prediction sets is to estimate the conditional density of $y_0$ given $x_0$ (if it exists), because, as can be easily shown, a highest density region of the conditional density of $y_0$ given $x_0$ provides the smallest (in terms of Lebesgue measure) prediction region $PR_\alpha(x_0)$ for $y_0$ that controls the conditional coverage probability given $x_0$, i.e., that satisfies
\begin{align}\label{eq:condXvalid}
P(y_0\in PR_\alpha(x) \| x_0 = x) \;\ge\; 1-\alpha \quad \text{for $P_{x_0}$-almost all $x$}.
\end{align}
This condition has been called \emph{object conditional validity} by \citet{Vovk13}. However, object conditional validity is often too much to ask for. First of all, as shown by \citet{Barber19} \citep[see also][]{Vovk13, Lei14}, for continuous distributions there are no non-trivial prediction sets based on a finite sample that satisfy \eqref{eq:condXvalid}. Moreover, even if we are content with \emph{asymptotic} object conditional validity, learning the relevant properties of the conditional density of $y_0$ given $x_0$ is typically only possible if the dimension of the feature vector $x_0$ is much smaller than the available sample size (cf. Remark~\ref{rem:Dicker}). Finally, even under stability assumptions of the underlying predictor, object conditional validity seems to be very hard. Therefore, since our focus in the present paper is on high-dimensional problems, we do not aim at object conditional validity.

\end{remark}

\begin{remark}[Prediction intervals with adaptive lengths]\normalfont
The length of the $k$-CV prediction interval in \eqref{eq:kCVPI}, as it stands, does not depend on the value of $x_0$. An immediate way to account for that is the following.
Consider, in addition, an estimator $\hat{\sigma}_n^2(x) = S(T_n,x)$ of the conditional variance $\Var[y_0\|x_0=x]$. Then a prediction interval can be computed as $\hat{\mu}_n(x_0) + [\hat{q}_{\alpha_1},\hat{q}_{\alpha_2}]\hat{\sigma}_n(x_0)$, where now, $\hat{q}_\alpha$ is an empirical $\alpha$-quantile of the $k$-CV residuals 
$$
\hat{u}_i = \frac{y_i-\hat{\mu}_n^{(l)}(x_i)}{\hat{\sigma}_{n,(l)}(x_i)}, \quad l\in[k], i\in K_l.
$$ 
An alternative construction of prediction intervals with adaptive length is given in \citet{Romano19} who consider a quantile regression procedure based on sample splitting.
\end{remark}

\begin{remark}[Computational versus statistical efficiency]\normalfont\label{rem:computation}
The computational burden of the $k$-CV prediction interval increases roughly linearly with the number of folds $k$. The model has to be re-fitted $k$-times on each of the possible reduced samples $T_n\setminus K_l$, $l=1,\dots, k$, in order to compute the residuals $\hat{u}_i = y_i - \hat{\mu}_n^{(l)}(x_i)$, $i\in K_l$. Thus, reducing the number of folds $k$ will speed up the computation of the procedure. However, choosing $k$ too small may invalidate the stability of the algorithm (i.e., $\eta_{n,k_n}\to0$ in Definition~\ref{DEF:STABLE}), because the predictor $\hat{\mu}_n$ based on the full training sample $T_n$ may be very different from the predictor $\hat{\mu}^{(l)}_n$ based on the reduced sample $T_n\setminus K_l$, if (for some $l$) $n-m_l = n-|K_l|$ is too small compared to $n$. As a consequence, the $k$-CV prediction interval may loose its approximate training conditional validity property and it may get overly wide, because the empirical distribution of the $k$-CV residuals $y_i-\hat{\mu}_n^{(l)}(x_i)$, $i\in K_l$, may be much more dispersed than the conditional distribution of the prediction error $y_0 - \hat{\mu}_n(x_0)$.

In some special cases, however, it is possible to devise a shortcut for the computation of these residuals. For example, in case of ordinary least squares prediction $\hat{\mu}_n(x) = x'\hat{\beta}_n = x'(X'X)^\dagger X'Y$, if $X_{[i]}'X_{[i]}$ has full rank, we have the well known identity for the leave-one-out residuals ($k=n$),
$$
\hat{u}_i = y_i-x_i'\hat{\beta}_n^{[i]} = \frac{y_i-x_i'\hat{\beta}_n}{1-x_i'(X'X)^{-1}x_i},
$$
such that the $n$-vector of leave-one-out residuals can be computed as
$$
\left[\diag(I_n-X(X'X)^{-1}X')\right]^{-1}(I_n - X(X'X)^{-1}X')Y.
$$
Hence, the model has to be fitted only once instead of $n$-times. 
\end{remark}


\section{Numerical results for heteroskedastic data}
\label{sec:mcApp}
We repeated the simulations from Section~\ref{sec:mc} for heteroskedastic $y_i$ generated as
$$
y_i\|x_i \thicksim \mathcal N\left(\beta'x_i,\sigma^2(x_i)\right), \quad \sigma^2(x_i) = 1+\frac{|\beta'x_i|}{\sqrt{10}} - \frac{\sqrt{2}}{\Gamma(\frac12)}.
$$
For comparability we have chosen the conditional standard deviation of $y_i$ such that $\E[\sigma^2(x_i)] = 1$ in order to achieve $\Var(y_i) = 11$, as in the homoskedastic case. Histograms of conditional coverage probabilities of the leave-one-out procedure ($k=n$) for different sample sizes are shown in Figure~\ref{fig:AsympHet}. Unconditional coverage probability, expected interval length and estimated stability are depicted in Figure~\ref{fig:Asymp-Cov-Len-StabHet}. The results are basically indistinguishable from the homoskedastic case. Studying the impact of the number of folds $k$ on the coverage probabilities (Figure~\ref{fig:Hists-np100-HET}) and on the expected interval length (Figure~\ref{fig:Cov-Len-Stab-HET}), we see a slightly worse performance of the $k$-CV procedure on heteroskadastic data compared to the homoskedastic case in the sense that conditional coverage probabilities appear to be a little bit more spread out around the nominal level and intervals tend to be somewhat wider.

\begin{figure}[htbp]
\includegraphics[width=\textwidth]{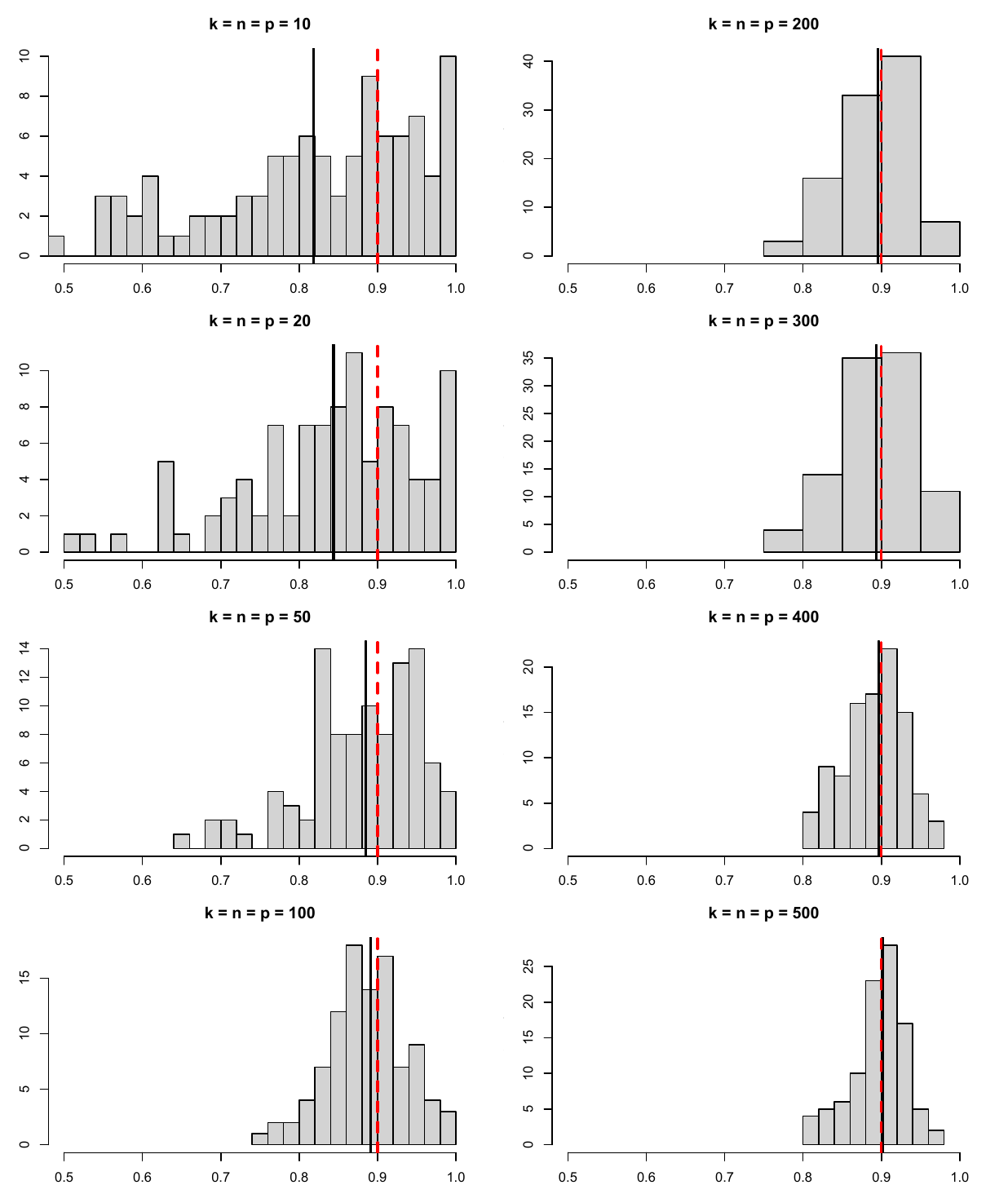} 
\caption{Histograms of conditional coverage probabilities $\P(y_0\in PI^{(L1O)}_{\alpha}(T_n,x_0) \| T_n)$ from $100$ Montecarlo training samples $T_n$ for different values of $n$ at confidence level of $1-\alpha = 0.9$ (red dashed line) based on heteroskedastic data. Here $k=p=n$.}
\label{fig:AsympHet}
\end{figure}

\begin{figure}[htbp]
\includegraphics[width=\textwidth]{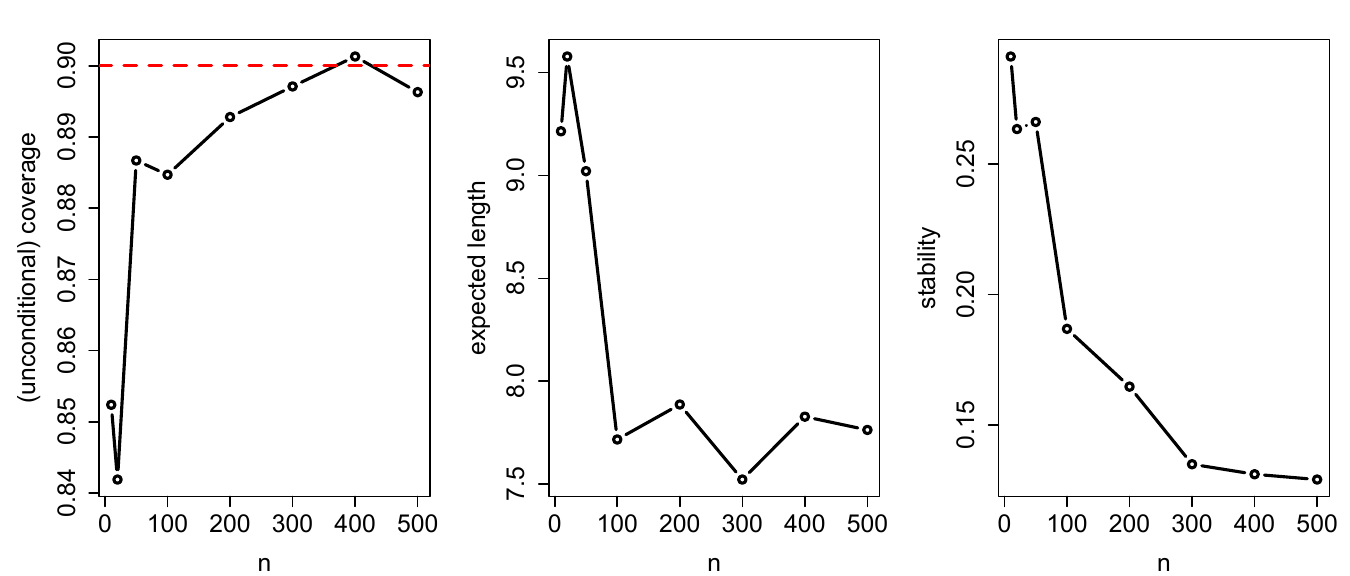} 
\caption{Unconditional coverage probability, expected interval length and estimated stability for different sample sizes $n$ from $100$ Montecarlo simulations with $k=n=p$ (i.e., leave-one-out) based on heteroskedastic data. }
\label{fig:Asymp-Cov-Len-StabHet}
\end{figure}

\begin{figure}[tbp]
\includegraphics[width=\textwidth]{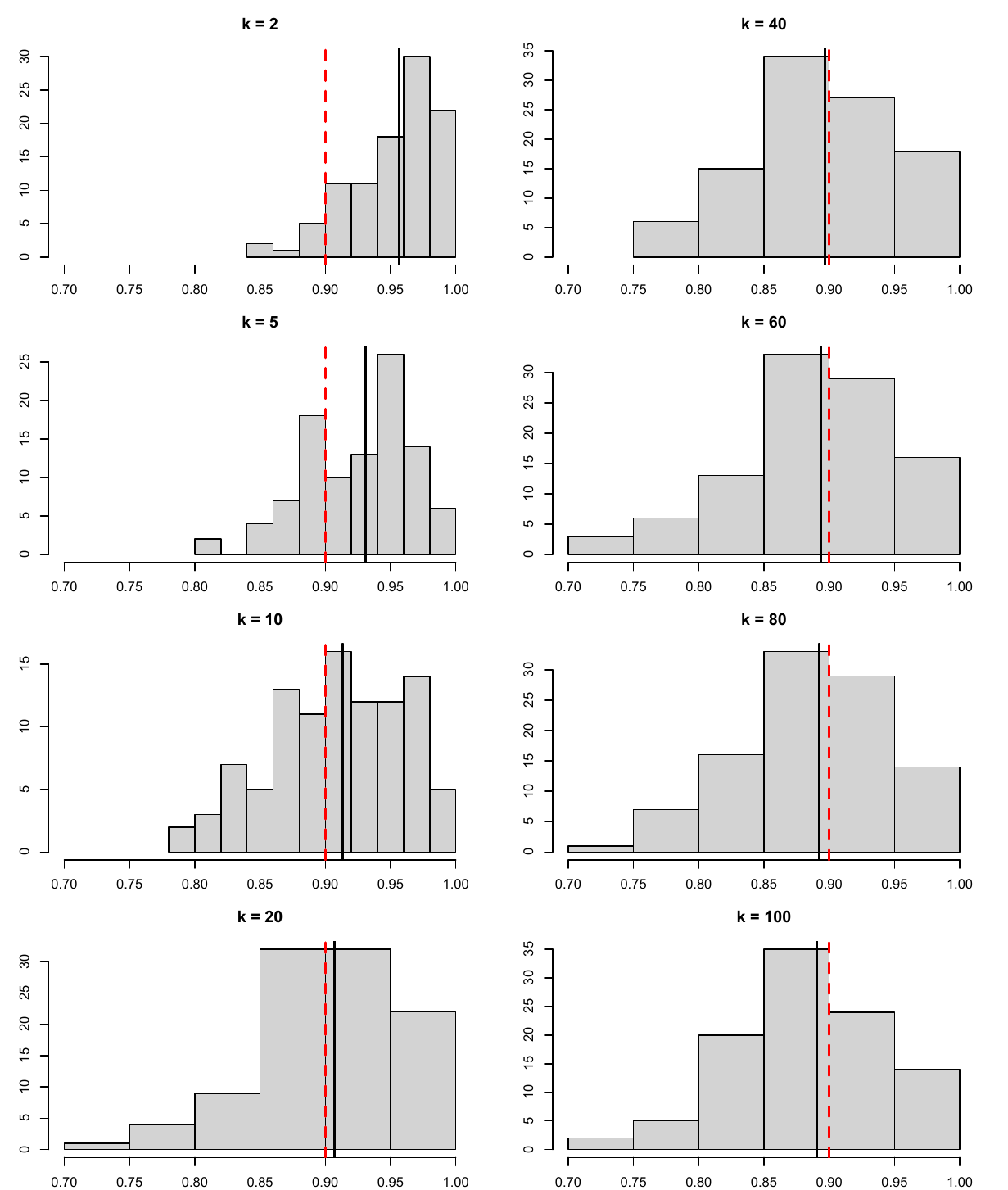} 
\caption{Histograms of conditional coverage probabilities $\P(y_0\in PI^{(kCV)}_{\alpha}(T_n,x_0) \| T_n)$ from $100$ Montecarlo training samples $T_n$ for different values of $k$  at confidence level of $1-\alpha = 0.9$ (red dashed line) based on heteroskedastic data. Here $n=p=100$.}
\label{fig:Hists-np100-HET}
\end{figure}

\begin{figure}[tbp]
\includegraphics[width=\textwidth]{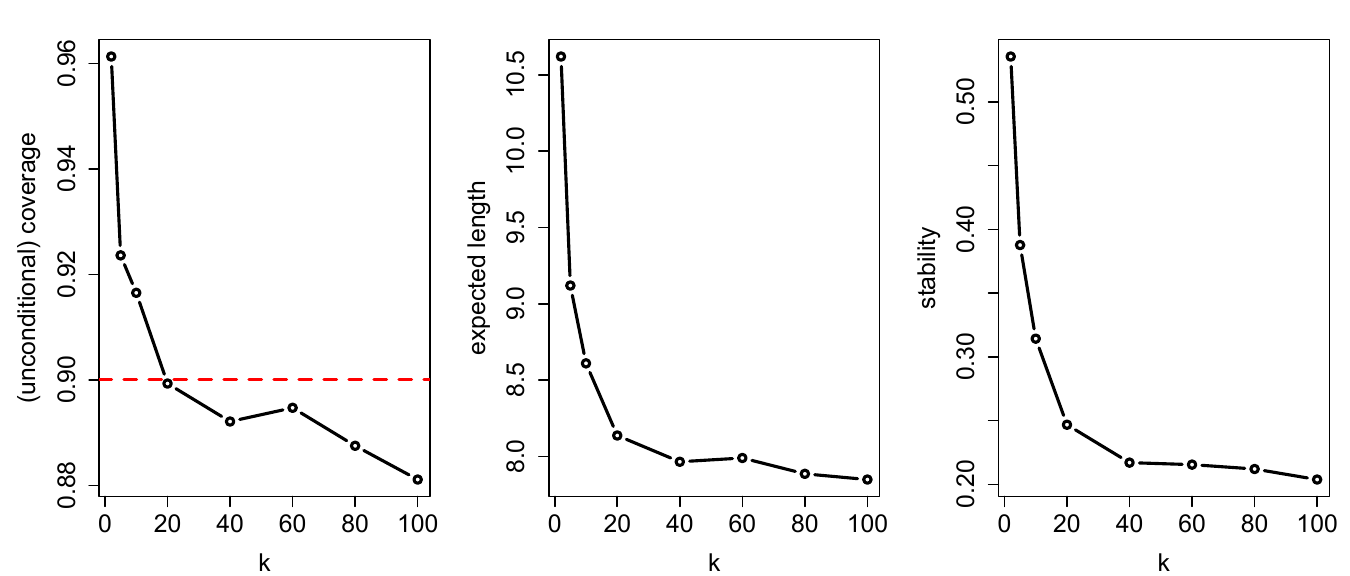} 
\caption{Unconditional coverage probability, expected interval length and estimated stability for different numbers of folds $k$ from $100$ Montecarlo simulations with $n=p=100$ based on heteroskedastic data.}
\label{fig:Cov-Len-Stab-HET}
\end{figure}


\section{Proofs of main results}
\label{sec:MainProofs}

We repeatedly use the following argument: 
Suppose that $A_n$, $n\ge 1$, are real-valued random variables on $(\Omega, \mathcal F, P)$ and $\mathcal F_n$, $n\ge 1$, are sub-sigma fields of $\mathcal F$. If $\E[|A_n|\| \mathcal F_n]$ is well-defined and converges to zero in probability, then $A_n$ converges to zero in probability. Indeed
\begin{align*}
P(|A_n|>\eps) 
= \E[ P(|A_n|>\eps\|\mathcal F_n) \land 1] 
\le
\E\left[ \left( \frac{1}{\eps}\E[|A_n|\|\mathcal F_n] \right)\land 1 \right]. 
\end{align*}
In this upper bound, the integrand converges to zero in probability by assumption, and this integrand is bounded by $1$ by construction. In view of the dominated convergence theorem, this upper bound converges to zero.
Moreover, if $\E[A_n\| \mathcal F_n]$ is well-defined and converges to $c\in\R$ in probability and if, in addition, $\Var[A_n\|\mathcal F_n]\to 0$ in probability, then $A_n\to c$
in probability. Indeed,
\begin{align*}
\E[(A_n-c)^2\|\mathcal F_n] = \Var[A_n\|\mathcal F_n] + (\E[A_n\|\mathcal F_n] - c)^2
\end{align*}
converges to zero in probability. From the preceding statement the claim follows.

The following result is an extension of Lemma~9 (Equation~(9)) in \citet{Bousquet02} \citep[see also][]{Devroye79} to the case of $k$-fold cross validation and potentially non-symmetric learning algorithms.

\begin{lemma}\label{lemma:BousquetCV}
For $n,p\in\N$, consider a learning algorithm $M_{n,p}:\mathcal Z^n\times\mathcal X\to\R$ and, for measurable $f:\mathcal X\to\R$ and $z=(x,y)\in\mathcal Z$, let $L(f,z) := c(y,f(x))$ be a loss function with $0\le c(y,y')\le C<\infty$, $y'\in\R$. For the training sample $T_n\in\mathcal Z^n$ and by a slight abuse of notation, write $M_{n,p}(T_n) := M_{n,p}(T_n,\cdot) : \mathcal X\to\R$ and define
$$
R := R(T_n) := \E_{z_0}[L(M_{n,p}(T_n), z_0)] = \E_{z_0}[c(y_0,M_{n,p}(T_n,x_0))]
$$
and
$$
R_{CV} := R_{CV}(T_n) := \frac1k \sum_{l=1}^k \frac{1}{m_l} \sum_{i\in K_l} L(M_{n-m_l,p}(T_n\setminus K_l),z_i).
$$
Then we have
\begin{align*}
\E&\left[ \left( R_{CV} - R\right)^2\right] \le \\
&\frac{C^2}{4(k-1)} 
+
5C\frac1k\sum_{l=1}^k \E\left[ |L(M_{n-m_l,p}(T_n\setminus K_l),z_0) - L(M_{n,p}(T_n),z_0)|\right]. 
\end{align*}

\end{lemma}

\begin{remark}\normalfont
Notice that the upper bound of Lemma~\ref{lemma:BousquetCV} does not simply reduce to the one of the analogous result of \citet{Bousquet02} in case $k=n$. On the one hand, the first term in the present upper bound (for $k=n$) is actually smaller than the corresponding term in \citet{Bousquet02} by a factor of $\frac{n}{2(n-1)}\le1$. This is due to a slightly more refined technique (cf. Lemma~\ref{lemma:doubleSum}) which can also be used to improve the result of \citet{Bousquet02}. On the other hand, the second term in our upper bound is larger than the corresponding one of \citet{Bousquet02} by a factor of $\frac53$. Notice, however, that unlike the result of those authors, the present formulation does not require the learning algorithm to be symmetric.
\end{remark}

\begin{proof}[Proof of Lemma~\ref{lemma:BousquetCV}]
Let $z,z'\in\mathcal Z$ and abbreviate $T=T_n=(z_1, \dots, z_n)$. For simplicity, and by another slight abuse of notation, we write $L(T,z) = L(M_{n,p}(T_n),z)$ and $L(T\setminus K_l,z)=L(M_{n-m_l,p}(T_n\setminus K_l),z)$, etc. We will repeatedly make use of the fact that the distribution of any function taking i.i.d. random vectors as inputs, is invariant under permutation of those inputs. Now
\begin{align*}
\E[R^2] &= \E_T \left[ \left( \E_{z}[L(T,z)]\right)^2\right] = \E_T \left[\E_z[L(T,z)] \E_{z'} [L(T,z')] \right] \\
&= \E[L(T,z) L(T,z')],
\end{align*}
and
\begin{align*}
\E[R\cdot R_{CV}] &= \frac1k \sum_{l=1}^k \frac{1}{m_l} \sum_{i\in K_l} \E \left[ R\cdot L(T\setminus K_l,z_i) \right]\\
&=
\frac1k \sum_{l=1}^k \frac{1}{m_l} \sum_{i\in K_l} \E[L(T,z) L(T\setminus K_l,z_{i})].
\end{align*}
For the squared CV error, we get
\begin{align*}
&\E[R_{CV}^2] = \frac{1}{k^2} \sum_{l=1}^k \E\left[ \left( \frac{1}{m_l} \sum_{i\in K_l} L(T\setminus K_l, z_i) \right)^2\right]\\
&\quad+
\frac{1}{k^2} \sum_{l_1\neq l_2} \frac{1}{m_{l_1}m_{l_2}} \sum_{i_1\in K_{l_1}} \sum_{i_2\in K_{l_2}}\underbrace{\E\left[ L(T\setminus K_{l_1},z_{i_1}) L(T\setminus K_{l_2},z_{i_2})\right]}_{=:E(l_1,l_2,i_1,i_2)}\\
&\le 
 \frac{C}{k^2} \sum_{l=1}^k \frac{1}{m_l} \sum_{i\in K_l} \E[L(T\setminus K_l, z_i)]\\
&\quad+
\frac{k(k-1)}{k^2} \frac{1}{k(k-1)}\sum_{l_1\neq l_2}\frac{1}{m_{l_1}m_{l_2}} \sum_{i_1\in K_{l_1}} \sum_{i_2\in K_{l_2}}E(l_1,l_2,i_1,i_2)\\
&=
\frac{1}{k(k-1)}\sum_{l_1\neq l_2}\frac{1}{m_{l_1}m_{l_2}} \sum_{i_1\in K_{l_1}} \sum_{i_2\in K_{l_2}}E(l_1,l_2,i_1,i_2)\\
&\quad+
\frac{1}{k} \frac{1}{k(k-1)}\sum_{l_1\neq l_2} \frac{1}{m_{l_1}m_{l_2}} \sum_{i_1\in K_{l_1}} \sum_{i_2\in K_{l_2}}\E\left[ L(T\setminus K_{l_1}, z_{i_1}) [C - L(T\setminus K_{l_2}, z_{i_2})] \right]\\
&=
\frac{1}{k(k-1)}\sum_{l_1\neq l_2}\frac{1}{m_{l_1}m_{l_2}} \sum_{i_1\in K_{l_1}} \sum_{i_2\in K_{l_2}}E(l_1,l_2,i_1,i_2)\\
&\quad+
\frac{1}{k} \frac{1}{k(k-1)}\sum_{l_1\neq l_2} \E\left[ \left(\frac{1}{m_{l_1}} \sum_{i_1\in K_{l_1}}L(T\setminus K_{l_1}, z_{i_1})\right) \left(C - \frac{1}{m_{l_2}} \sum_{i_2\in K_{l_2}} L(T\setminus K_{l_2}, z_{i_2})\right) \right].
\end{align*}
Now, using Lemma~\ref{lemma:doubleSum} together with boundedness of the loss, we see that the expression on the last line of the previous display is bounded by
\begin{align*}
\frac{C^2}{4k} \frac{k}{k-1}.
\end{align*}
Combining the previously derived facts, we therefore arrive at
\begin{align*}
&\E\left[ \left( R_{CV} - R\right)^2\right] \\
&\quad\le 
\frac{C^2}{4k} \frac{k}{k-1} + \frac{1}{k(k-1)}\sum_{l_1\neq l_2} \frac{1}{m_{l_1}m_{l_2}} \sum_{i_1\in K_{l_1}}\sum_{i_2\in K_{l_2}}\\
&\hspace{1cm} \Big(
\underbrace{\E[L(T,z) L(T,z')] - \E[L(T,z) L(T\setminus K_{l_2},z_{i_2})]}_{=:I_1} \\
&\quad\quad\quad+ \underbrace{\E\left[ L(T\setminus K_{l_1},z_{i_1}) L(T\setminus K_{l_2},z_{i_2})\right]-\E\left[L(T,z) L(T\setminus K_{l_2},z_{i_2})\right]}_{=:I_2}
\Big).
\end{align*}
Next, we repeatedly use exchangeability of $z_1,\dots, z_n, z,z'$. Moreover, by symbols like $T_{z_{i_2}\leftarrow z'}$ we denote the training data set $T$ where the observation $z_{i_2}$ was replaced by $z'$. Also recall that $i_1\in K_{l_1}$ and $i_2\in K_{l_2}$. Thus, for $I_1$ we get
\begin{align*}
I_1 &= 
\E[L(T,z) L(T,z')] - \E[L(T_{z_{i_2}\leftarrow z'},z) L(T\setminus K_{l_2},z')]\\
&=
\E\left[
\left( L(T,z) - L(T\setminus K_{l_2},z)\right) L(T,z')
\right]\\
&\quad+
\E\left[
\left(L(T\setminus K_{l_2},z) - L(T_{z_{i_2}\leftarrow z'},z)\right) L(T,z')
\right]\\
&\quad+
\E\left[
\left(L(T,z') - L(T\setminus K_{l_2},z')\right) L(T_{z_{i_2}\leftarrow z'},z)
\right]\\
&=: I_1(1) + I_1(2) + I_1(3).
\end{align*}
But exchanging $z_{i_2}$ and $z'$, we get 
$$
I_1(2) = \E\left[
\left(L(T\setminus K_{l_2},z) - L(T,z)\right) L(T_{z_{i_2}\leftarrow z'},z_{i_2})
\right],
$$
and 
\begin{align*}
I_1 &= 
I_1(3) + 
\E\left[
( L(T,z) - L(T\setminus K_{l_2},z)) ( L(T,z') - L(T_{z_{i_2}\leftarrow z'},z_{i_2}))
\right]\\
&\le 
2C\E\left[|L(T,z) - L(T\setminus K_{l_2},z)|\right],
\end{align*}
because $L$ is bounded between $0$ and $C$ and hence the modulus of the difference of two $L$ functions is bounded by $C$. 
Moreover, for $I_2$ we similarly get 
\begin{align*}
I_2 &= 
\E\left[ L(T_{z_{i_2}\leftarrow z'}\setminus K_{l_1},z) L(T_{z_{i_1}\leftarrow z}\setminus K_{l_2},z')\right]  \\
&\hspace{2cm}-\E\left[L(T_{z_{i_2}\leftarrow z'},z) L(T\setminus K_{l_2},z')\right] \\
&=
\E\left[
\left(L(T_{z_{i_1}\leftarrow z}\setminus K_{l_2},z') - L(T\setminus K_{l_1},z')\right) 
L(T_{z_{i_2}\leftarrow z'}\setminus K_{l_1},z)
\right] \\
&\quad+
\E\left[
\left(L(T\setminus K_{l_1},z') - L(T\setminus K_{l_2},z')\right) 
L(T_{z_{i_2}\leftarrow z'}\setminus K_{l_1},z)
\right] \\
&\quad+
\E\left[
\left(L(T_{z_{i_2}\leftarrow z'}\setminus K_{l_1},z) - L(T_{z_{i_2}\leftarrow z'},z)\right) L(T\setminus K_{l_2},z')
\right]\\
&=: I_2(1) + I_2(2) + I_2(3).
\end{align*}
Exchanging $z_{i_1}$ with $z$ in $I_2(1)$ and $z_{i_2}$ with $z'$ in $I_2(3)$ results in
$$
I_2(1) = \E\left[
\left(L(T\setminus K_{l_2},z') - L(T\setminus K_{l_1},z')\right) 
L(T_{z_{i_2}\leftarrow z'}\setminus K_{l_1},z_{i_1})
\right] 
$$
and
$$
I_2(3) = \E\left[
\left(L(T\setminus K_{l_1},z) - L(T,z)\right) L(T\setminus K_{l_2},z_{i_2})
\right].
$$
Thus, 
\begin{align*}
I_2 &= I_2(3) + \E\left[
\Big(L(T\setminus K_{l_2},z') - L(T\setminus K_{l_1},z')\Big) \right.\\
&\hspace{3cm}\left.\times\left(L(T_{z_{i_2}\leftarrow z'}\setminus K_{l_1},z_{i_1})-L(T_{z_{i_2}\leftarrow z'}\setminus K_{l_1},z) \right)
\right] \\
&\le
2C\E\left[
|L(T\setminus K_{l_1},z) - L(T,z)|\right]
+
C\E\left[
|L(T\setminus K_{l_2},z) - L(T,z)|
\right].
\end{align*}
Therefore, we conclude that
\begin{align*}
\E\left[ \left( R_{CV} - R\right)^2\right] \le
\frac{C^2}{4k} \frac{k}{k-1}
+
5C\frac1k\sum_{l=1}^k \E\left[ |L(T\setminus K_l,z) - L(T,z)|\right].
\end{align*}
\end{proof}


\subsection{Proof of Theorem~\ref{thm:density}}
\label{sec:proof:thm:density}

The following is an immediate consequence of Lemma~\ref{lemma:BousquetCV} applied with the loss function $L(f,z) = \mathbbm{1}_{(-\infty,s]}(y-f(x))$, $f:\mathcal X\to\mathcal Y$, $z=(x,y)\in\mathcal Z$, $s\in\R$ and $C=1$. 

\begin{lemma}
\label{lemma:Bousquet}
For every $s\in\R$, we have
\begin{align*}
&\E\left[\left(\hat{F}_n(s) - \tilde{F}_n(s)\right)^2\right] \\
&\le\; \frac{1}{4(k-1)} \;+\; \frac5k \sum_{l=1}^k\E\left[\left|\mathbbm{1}_{(-\infty, s]}(y_0-\hat{\mu}_n(x_0)) - \mathbbm{1}_{(-\infty, s]}(y_0-\hat{\mu}_n^{(l)}(x_0)) \right|\right].
\end{align*}
\end{lemma}

It is elementary to relate the upper bound of Lemma~\ref{lemma:Bousquet} to the stability coefficient (cf. Definition~\ref{DEF:STABLE}) of $\hat{\mu}_n$. We defer the proof of the next result until the end of this section.

\begin{lemma}\label{lemma:elementary}
Suppose that $y_0$ has a conditional Lebesgue density $f_{y_0\|x_0}$ given $x_0$. Then, for every $s\in\R$ and every $l=1,\dots, k$,
\begin{align*}
&\E\left[\left|\mathbbm{1}_{(-\infty, s]}(y_0-\hat{\mu}_n(x_0)) - \mathbbm{1}_{(-\infty, s]}(y_0-\hat{\mu}_n^{(l)}(x_0)) \right|\right]\\
&\quad\le
\E\left[
	\left(\|f_{y_0\|x_0}\|_\infty |\hat{\mu}_n(x_0) - \hat{\mu}_n^{(l)}(x_0)| \right)\land 1\right].
\end{align*}
\end{lemma}

To turn the pointwise bound of Lemma~\ref{lemma:Bousquet} into a uniform one, we need a certain continuity and tightness property of $\tilde{F}_n$. 

\begin{lemma}\label{lemma:Ftilde}
Fix a training sample $T_n\in\mathcal Z^n$.
\begin{enumerate}
	\setlength\leftmargin{-20pt}
	\renewcommand{\theenumi}{(\roman{enumi})}
	\renewcommand{\labelenumi}{{\theenumi}} 

\item \label{l:Lipschitz} Suppose that the conditional distribution of $y_0$ given $x_0$ has Lebesgue density $f_{y_0\|x_0}$. If $s_1, s_2\in\R$, then
$$
\left|\tilde{F}_n( s_1; T_n) - \tilde{F}_n(s_2; T_n)\right| \;\le\; \E_{x_0}\left[ (\|f_{y_0\|x_0}\|_\infty |s_1-s_2|)\land1 \right].
$$

\item \label{l:Tightness} For $c_1,c_2\in(0,\infty)$ and $g:\mathcal X\to\R$ measurable we have
\begin{align*}
&1-\tilde{F}_n( c_1+c_2; T_n) + \tilde{F}_n(-(c_1+c_2); T_n) \\
&\quad \le
P \left( |y_0-g(x_0)| \ge c_1\right) + \P_{x_0}(|\hat{\mu}_n(x_0)-g(x_0)| > c_2).
\end{align*}

\end{enumerate}
\end{lemma}

We provide the proofs of Lemma~\ref{lemma:elementary} and Lemma~\ref{lemma:Ftilde} below, after the main argument is finished.
The proof of Theorem~\ref{thm:density} is now a finite sample version of the proof of Polya's theorem. 
Fix $g:\mathcal X\to \R$ measurable, positive real numbers $\eps, c_1, c_2$, and set $\overline{s} = c_1+c_2$ and $\underline{s} = -c_1-c_2$. 
We split up the interval $[\underline{s},\overline{s})$ into $L$ intervals $[s_{j-1},s_j)$, $j=1,\dots, L$, with endpoints $\underline{s} =: s_0 < s_1 < \dots < s_L := \overline{s}$, such that $s_j - s_{j-1} \le \eps$. We may thus take $L = \lceil (\overline{s}-\underline{s})/\eps\rceil = \lceil 2(c_1+c_2)/\eps\rceil$. If $s<s_0$, then
\begin{align*}
\hat{F}_n(s) - \tilde{F}_n(s) \;&\ge\; - \tilde{F}_n(s_0) \ge - |\hat{F}_n(s_0) - \tilde{F}_n(s_0)| - \tilde{F}_n(s_0),\\
\hat{F}_n(s) - \tilde{F}_n(s) \;&\le\; \hat{F}_n(s_0) \le |\hat{F}_n(s_0) - \tilde{F}_n(s_0)| + \tilde{F}_n(s_0).
\end{align*}
Furthermore, if $s\ge s_L$, then
\begin{align*}
\hat{F}_n(s) - \tilde{F}_n(s) \;&\ge\; \hat{F}_n(s_L) - 1 \\&\ge\;  - |\hat{F}_n(s_L) - \tilde{F}_n(s_L)| - \left( 1-\tilde{F}_n(s_L)\right), \\
\hat{F}_n(s) - \tilde{F}_n(s) \;&\le\; 1 - \tilde{F}_n(s_L) \\&\le\; |\hat{F}_n(s_L) - \tilde{F}_n(s_L)| + 1- \tilde{F}_n(s_L).
\end{align*}
Finally, for $j\in\{1,\dots, L\}$ and $s\in[s_{j-1},s_j)$,
\begin{align*}
\hat{F}_n(s) - \tilde{F}_n(s) \;&\ge\; - |\hat{F}_n(s_{j-1}) - \tilde{F}_n(s_{j-1})| - \left( \tilde{F}_n(s_j) - \tilde{F}_n(s_{j-1})\right), \\
\hat{F}_n(s) - \tilde{F}_n(s) \;&\le\; |\hat{F}_n(s_j) - \tilde{F}_n(s_j)| + \left( \tilde{F}_n(s_j) - \tilde{F}_n(s_{j-1}) \right).
\end{align*}
Thus, discretizing the supremum over $\R$, we get
\begin{align*}
&\sup_{s\in\R} | \hat{F}_n(s) - \tilde{F}_n(s) | \\
&=  \sup_{s<s_0} | \hat{F}_n(s) - \tilde{F}_n(s) | \lor \sup_{s\ge s_L} | \hat{F}_n(s) - \tilde{F}_n(s) |\\
&\quad\quad\lor \max_{j=1,\dots,L} \sup_{s\in[s_{j-1},s_j)} | \hat{F}_n(s) - \tilde{F}_n(s) | \\
&\le\quad
\left( |\hat{F}_n(s_0) - \tilde{F}_n(s_0) | + \tilde{F}_n(s_0) \right) 
\lor 
\left( |\hat{F}_n(s_L) - \tilde{F}_n(s_L) | + 1 - \tilde{F}_n(s_L) \right)\\
&\quad\quad\lor \max_{j=1,\dots,L} \left( \left[|\hat{F}_n(s_{j-1}) - \tilde{F}_n(s_{j-1}) |\lor |\hat{F}_n(s_j) - \tilde{F}_n(s_j) |\right] +  \tilde{F}_n(s_j) - \tilde{F}_n(s_{j-1}) \right).
\end{align*}
Next using the abbreviation $e_n(x_0) = \hat{\mu}_n(x_0) - g(x_0)$ and both parts of Lemma~\ref{lemma:Ftilde}, we can further bound this by
\begin{align*}
&\max_{j=0,\dots,L} \left( |\hat{F}_n(s_{j}) - \tilde{F}_n(s_{j})| \right) \\
&\quad+ \max\left\{ \E_{x_0}\left[(\eps\|f_{y_0\|x_0}\|_\infty)\land1\right], \tilde{F}_n(s_0), 1-\tilde{F}_n(s_L)\right\} \\
\le&
\max_{j=0,\dots,L} \left( |\hat{F}_n(s_{j}) - \tilde{F}_n(s_{j})| \right) \\
&\quad+ \E_{x_0}\left[(\eps\|f_{y_0\|x_0}\|_\infty)\land1\right] + P \left( |y_0-g(x_0)| \ge c_1\right) + P(|e_n(x_0)| > c_2).
\end{align*}
Now, using Lemma~2.1 of \citet{Aven85}, the expectation (w.r.t. the training data $T_n$) of the maximum can be bounded by
\begin{align*}
\left(\sum_{j=0}^{L} \E\left[|\hat{F}_n(s_{j}) - \tilde{F}_n(s_{j})|^2\right] \right)^{\frac12}.
\end{align*}
Finally, applying Lemmas~\ref{lemma:Bousquet} and \ref{lemma:elementary}, the expression on the previous display is bounded by
$$
\left( (L+1) \left[\frac{1}{4(k-1)} + 5\eta_{n,k}\right]\right)^{\frac12}.
$$
We have established the bound
\begin{align*}
&\E\left[\|\hat{F}_n - \tilde{F}_n\|_\infty\right]
\le
\left( \left(\frac{2(c_1+c_2)}{\eps}+2\right) \left[\frac{1}{4(k-1)} + 5\eta_{n,k}\right]\right)^{\frac12}\\
&\quad+
 \E_{x_0}\left[(\eps\|f_{y_0\|x_0}\|_\infty)\land1\right] + P\left( |y_0-g(x_0)| \ge c_1\right) + \P(|e_n(x_0)| > c_2).
\end{align*}
\hfill \qed


\begin{proof}[Proof of Lemma~\ref{lemma:elementary}]
The integrand on the left of the desired inequality is equal to
\begin{align*}
\mathbbm{1}_{\left\{ y_0-\hat{\mu}_n(x_0) \le s < y_0 - \hat{\mu}_n^{(l)}(x_0)\right\}}
+
\mathbbm{1}_{\left\{ y_0-\hat{\mu}_n^{(l)}(x_0) \le s < y_0 - \hat{\mu}_n(x_0)\right\}}.
\end{align*}
Thus, the conditional expectation of the sum in the previous display, given the training data $T_n$ and $x_0$, can be expressed as
\begin{align*}
&\P( s+\hat{\mu}_n(x_0) < y_0 \le s+\hat{\mu}_n^{(l)}(x_0) \| T_n, x_0) \\
&\quad\quad+
\P( s+\hat{\mu}_n^{(l)}(x_0) < y_0 \le s+\hat{\mu}_n(x_0) \| T_n, x_0) \\
&=
\int\limits_{s + \hat{\mu}_n(x_0)\land \hat{\mu}_n^{(l)}(x_0)}^{s + \hat{\mu}_n(x_0) \lor \hat{\mu}_n^{(l)}(x_0)} f_{y_0\|x_0}(v)\;dv  \land 1
\le
\left(\|f_{y_0\|x_0}\|_\infty |\hat{\mu}_n(x_0) - \hat{\mu}_n^{(l)}(x_0)| \right)\land 1.
\end{align*}
\end{proof}


\begin{proof}[Proof of Lemma~\ref{lemma:Ftilde}]
For $T_n\in\mathcal Z^n$ and $s_1>s_2$, note
\begin{align*}
&\tilde{F}_n( s_1) - \tilde{F}_n( s_2)
=
\P_{z_0} \left( s_2 +  \hat{\mu}_n(x_0)< y_0 \le s_1 + \hat{\mu}_n(x_0)\right)\\
&\quad=
\E_{x_0}\left[  \int\limits_{s_2 +  \hat{\mu}_n(x_0)}^{s_1 +  \hat{\mu}_n(x_0)} f_{y_0\|x_0}(v)\;dv \land 1 \right]
\;\le\;
\E_{x_0}\left[ (\|f_{y_0\|x_0}\|_\infty (s_1-s_2))\land1 \right].
\end{align*}
So the first claim follows upon reversing the roles of $s_1$ and $s_2$.
For the second claim, set $e_n := \hat{\mu}_n(x_0) - g(x_0)$ to obtain
\begin{align*}
\tilde{F}_n( c_1+c_2) 
\;&=\;
\P_{z_0} \left( y_0 \le c_1+c_2 + \hat{\mu}_n(x_0) \right)\\
&\ge\;
\P_{z_0} \left( y_0 - g(x_0) \le c_1 + c_2 + e_n, e_n \ge -c_2 \right)\\
&\ge\;
\P_{z_0} \left( y_0 - g(x_0) \le c_1, e_n \ge -c_2 \right).
\end{align*}
Therefore, taking complements, we arrive at
$$
1- \tilde{F}_n( c_1+c_2) \le P \left( y_0 - g(x_0) > c_1\right) + \P_{z_0}\left( e_n < -c_2 \right).
$$
 The second one is obtained analogously by
\begin{align*}
\tilde{F}_n(-c_1-c_2) 
\;&=\;
P \left( y_0 - g(x_0) \le -c_1-c_2 + e_n \right)\\
&\le\;
\P_{z_0} \left( y_0 - g(x_0) \le -c_1-c_2 + e_n, e_n \le c_2 \right) + \P_{z_0}(e_n > c_2 )\\
&\le\;
P \left( y_0-g(x_0) \le -c_1\right) + \P_{z_0}(e_n > c_2).
\end{align*}
Adding both bounds finishes the proof.
\end{proof}


\subsection{Proof of Theorem~\ref{thm:JSmisspec}}

We begin by showing that under the assumptions of Theorem~\ref{thm:JSmisspec}, both of its conclusions hold with $c_n=0$ (OLS) and irrespective of $\kappa\in[0,1)$. The proof of the following result is deferred to the end of the subsection.

\begin{lemma}\label{lemma:OLSmisspec}
For every $n\in\N$, let $\mathcal P_n= \mathcal P_n(\mathcal L_l, \mathcal L_v, C_0)$ be as in Condition~\ref{c.non-linmod}. For $P_n\in \mathcal P_n$, define $\beta_{P_n}$ to be the minimizer of $\beta \mapsto \E_n[(y_0-\beta'x_0)^2]$ over $\R^{p_n}$. If $p_n/n\to \kappa \in [0,1)$ and
\begin{align}\label{eq:MisspecBound}
\limsup_{n\to\infty} \E_n\left[\left(\frac{\mu_{P_n}(x_{0,n}) - x_{0,n}'\beta_{P_n}}{\sigma_{P_n}} \right)^2\right] \;<\;\infty,
\end{align}
then the ordinary least squares estimator $\hat{\beta}_n = (X'X)^\dagger X'Y$ satisfies
$$
 \left\|\Sigma_{P_n}^{1/2}(\hat{\beta}_n - \beta_{P_n})/\sigma_{P_n}\right\|_2^2  = O_{P_n}(1),
$$
and 
$$
\left\|\Sigma_P^{1/2}(\hat{\beta}_n - \hat{\beta}_n^{[1]})/\sigma_P\right\|_2^2 = o_{P_n}(1).
$$
\end{lemma}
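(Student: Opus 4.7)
The plan is to reduce Theorem~\ref{thm:JSmisspec} to Lemma~\ref{lemma:OLSmisspec} (taken as a black box) via the multiplicative factorisation $\hat\beta_n(c_n)=\hat s_n\hat\beta_n$ with shrinkage scalar
$\hat s_n := \max\bigl(0,\,1 - c_n p_n \hat\sigma_n^2/(\hat\beta_n'X'X\hat\beta_n)\bigr)\in[0,1]$.
The workhorse will be the purely algebraic inequality
$(1-\hat s_n)^2\,\hat\beta_n'X'X\hat\beta_n \le c_n p_n\,\hat\sigma_n^2 \le p_n\,\hat\sigma_n^2$,
which is immediate whether or not the positive part is active.

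For the bounded estimation error \eqref{eq:estErrorLin}, I would split
\begin{equation*}
\hat\beta_n(c_n)-\beta_P \;=\; (\hat\beta_n - \beta_P)\;-\;(1-\hat s_n)\hat\beta_n
\end{equation*}
and apply the triangle inequality in $\|\Sigma_P^{1/2}\cdot\|_2$. Lemma~\ref{lemma:OLSmisspec} handles the first summand. For the second, the workhorse gives
$(1-\hat s_n)\sqrt{\hat\beta_n'X'X\hat\beta_n/n} \le \sqrt{p_n/n}\,\hat\sigma_n$,
and an $L^2$-concentration step for the quadratic form $\hat\beta_n'(X'X/n)\hat\beta_n$ around $\|\Sigma_P^{1/2}\hat\beta_n\|_2^2$ (of the same flavour that already drives the proof of Lemma~\ref{lemma:OLSmisspec}) lifts this to $(1-\hat s_n)\|\Sigma_P^{1/2}\hat\beta_n\|_2/\sigma_P = O_{\mathcal P_n}(\sqrt{\kappa})$. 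Here $\hat\sigma_n/\sigma_P=O_{\mathcal P_n}(1)$ follows from the moment hypotheses in~\ref{c.non-linmod} together with \eqref{eq:MisspecBound} via Markov's inequality applied to $\eta'(I-P_X)\eta/(n-p_n)$ with $\eta=Y-X\beta_P$.

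For the leave-one-out stability \eqref{eq:etanLin}, I would decompose
\begin{equation*}
\Sigma_P^{1/2}\bigl(\hat\beta_n(c_n)-\hat\beta_n^{[1]}(c_n)\bigr)
\;=\;\hat s_n\,\Sigma_P^{1/2}\bigl(\hat\beta_n-\hat\beta_n^{[1]}\bigr)
\;+\;(\hat s_n-\hat s_n^{[1]})\,\Sigma_P^{1/2}\hat\beta_n^{[1]}.
\end{equation*}
The first summand is $o_{\mathcal P_n}(\sigma_P)$ by Lemma~\ref{lemma:OLSmisspec}, and scalar stability $\hat s_n - \hat s_n^{[1]}=o_{\mathcal P_n}(1)$ comes from the $1$-Lipschitz property $|\max(0,a)-\max(0,b)|\le|a-b|$ applied to the argument of $\hat s_n$, combined with leave-one-out stability of $\hat\sigma_n^2$ and of $\hat\beta_n'X'X\hat\beta_n$; the latter two follow from the OLS stability and the standard rank-one update $y_i - x_i'\hat\beta_n^{[i]}=(y_i-x_i'\hat\beta_n)/(1-x_i'(X'X)^{-1}x_i)$.

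The main obstacle is the cross term, because $\|\Sigma_P^{1/2}\hat\beta_n^{[1]}\|_2/\sigma_P$ is \emph{not} uniformly $O_{\mathcal P_n}(1)$: it inherits the quantity $\|\Sigma_P^{1/2}\beta_P\|_2/\sigma_P$, on which \ref{c.non-linmod} imposes no uniform bound. The resolution I plan to exploit is a compensation phenomenon. Precisely when $\|\Sigma_P^{1/2}\beta_P\|_2/\sigma_P$ is large, the denominator $\hat\beta_n'X'X\hat\beta_n$ concentrates around $n\|\Sigma_P^{1/2}\beta_P\|_2^2$, so both $1-\hat s_n$ and $1-\hat s_n^{[1]}$ are of order $\sigma_P^2\,p_n/(n\|\Sigma_P^{1/2}\beta_P\|_2^2)$, and their difference is at most of this order magnified by the $O(1/n)$ perturbation from removing one observation. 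The cross factor contributes $\|\Sigma_P^{1/2}\beta_P\|_2/\sigma_P$, so the product is bounded by a multiple of $\sigma_P\,\kappa/\|\Sigma_P^{1/2}\beta_P\|_2$, which is small. To make the argument uniform I would split $\mathcal P_n$ at a threshold $\|\Sigma_P^{1/2}\beta_P\|_2/\sigma_P\le R_n$ with $R_n\uparrow\infty$ slowly: below the threshold, stability of $\hat s_n$ and boundedness of the cross factor suffice; above it, the compensation argument does. The delicate technical point will be to establish the concentration of $\hat\beta_n'X'X\hat\beta_n$ in a form that stays quantitatively useful as $\|\Sigma_P^{1/2}\beta_P\|_2/\sigma_P$ grows with $n$.
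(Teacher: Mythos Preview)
Your proposal addresses the wrong statement. The statement in question is Lemma~\ref{lemma:OLSmisspec}, which concerns the \emph{ordinary least squares} estimator $\hat\beta_n=(X'X)^\dagger X'Y$: one must show directly that OLS has uniformly bounded estimation error and is leave-one-out stable. You instead sketch a proof of Theorem~\ref{thm:JSmisspec} (James--Stein), explicitly taking Lemma~\ref{lemma:OLSmisspec} as a black box. Nothing in your write-up touches the actual content of the lemma.

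For reference, the paper's proof of Lemma~\ref{lemma:OLSmisspec} proceeds as follows. For the first claim, write $\Sigma^{1/2}(\hat\beta_n-\beta_P)/\sigma_P=(V'V)^{-1}V'\tilde u$ with $V=X\Sigma^{-1/2}$ and $\tilde u_i=(m_P(x_i)-\beta_P'x_i)/\sigma_P+u_i$, bound its squared norm by $\|\tilde u/\sqrt n\|_2^2\,\|(V'V/n)^{-1}\|_2$ on the event $\{\lambda_{\min}(V'V/n)>\xi\}$, and invoke Lemma~\ref{lemma:traceConv} for the eigenvalue control together with Markov's inequality and the moment bound on $\tilde u_1$. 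For the stability claim, the Sherman--Morrison formula gives
\[
\tilde\beta_n-\tilde\beta_n^{[1]}=\frac{S_1^{-1}v_1(\tilde u_1-v_1'\tilde\beta_n^{[1]})}{1+v_1'S_1^{-1}v_1},
\]
whence $\|\Sigma^{1/2}(\hat\beta_n-\hat\beta_n^{[1]})/\sigma_P\|_2^2\le 2(\tilde u_1^2+(v_1'\tilde\beta_n^{[1]})^2)\,v_1'S_1^{-2}v_1$; the first factor is $O_{\mathcal P_n}(1)$ by the first part, while $\E[v_1'S_1^{\dagger 2}v_1\mid S_1]=\trace S_1^{\dagger 2}\to 0$ by Lemma~\ref{lemma:traceConv}. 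None of your shrinkage-factor manipulations, your threshold split at $R_n$, or your compensation argument are relevant here---those belong to the James--Stein reduction, not to the OLS lemma itself.
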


We proceed with the proof of Theorem~\ref{thm:JSmisspec}.
Abbreviate $\mu_n = \mu_{P_n}$, $\beta_n = \beta_{P_n}$, $\Sigma_n = \Sigma_{P_n}$ and $\sigma_n = \sigma_{P_n}$.
We have to show that $\limsup_{n\to\infty} \P_n(\|\Sigma_n^{1/2}(\hat{\beta}_n(c_n) - \beta_n)/\sigma_n\|_2^2>M)\to 0$ as $M\to\infty$, and, provided that $\kappa>0$, that $\P_n(\|\Sigma_n^{1/2}(\hat{\beta}_n(c_n) - \hat{\beta}_n^{[1]}(c_n))/\sigma_n\|_2^2>\eps)\to 0$, as $n\to\infty$, for every $\eps>0$.

Define $\delta_n^2 = \beta_n'\Sigma_n\beta_n/\sigma_n^2$, $t_n^2 = \hat{\beta}_n'X'X\hat{\beta}_n/(n\sigma_n^2)$ and
\begin{align*}
s_n = \begin{cases}
\left( 1 - \frac{p_n}{n}\frac{c_n}{t_n^2}\frac{\hat{\sigma}_n^2}{\sigma_n^2}\right)_+, \quad&\text{if } t_n^2 >0,\\
1, &\text{if } t_n^2=0,
\end{cases}
\end{align*}
such that $0\le s_n\le 1$, and $\hat{\beta}_n(c_n) = s_n \hat{\beta}_n$, because $t_n^2=0$ if, and only if, $\hat{\beta}_n=0$. 
We abbreviate $D:= \limsup_{n\to \infty} \E_{x_{0,n}}[(\mu_n(x_{0,n})-\beta_n'x_{0,n})^2]/\sigma_n^2 < \infty$. The following properties are useful and will be verified after the main argument is finished.

\begin{lemma}\label{lemma:JSmisspec}
Under the assumptions of Theorem~\ref{thm:JSmisspec} we have: $\hat{\sigma}_n^2/\sigma_n^2$ and $\sigma_n^2/\hat{\sigma}_n^2$ are $P_n$-bounded in probability, $\P_n(\hat{\sigma}_n^2=0)=0$ (provided that $n\ge p_n+2$) and $\P_n(t_n^2=0)\to 0$. Furthermore, we have $\P_n(t_n^2 \ge \kappa/2) \to 1$ if $\delta_n\to\delta\in[0,\infty)$.
All the statements of the lemma continue to hold also for the leave-one-out versions $t_{n,[1]}^2:= \hat{\beta}_n^{[1]'}X_{[1]}'X_{[1]}\hat{\beta}_n^{[1]}/(n\sigma_n^2)$ and $\hat{\sigma}_{n,[1]}^2 := \|Y_{[1]}-X_{[1]}\hat{\beta}_n^{[1]}\|_2^2/(n-1-p_n)$ of $t_n^2$ and $\hat{\sigma}_n^2$.
\end{lemma}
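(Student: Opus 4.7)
The plan is to work with the decomposition $Y = X\beta_n + R + \sigma_n U$, where the vector $R\in\R^n$ has entries $R_i := m_n(x_i) - x_i'\beta_n$ and $U = (u_1,\dots,u_n)'$. Two properties of $R$ will be used repeatedly: by the defining OLS-projection property of $\beta_n$, one has $\E_{P_n}[x_0 r_n(x_0)]=0$ with $r_n(x) := m_n(x)-x'\beta_n$; and by~\eqref{eq:MisspecBound}, $\E_{P_n}[\|R\|_2^2]/(n\sigma_n^2)\le D$, hence $\|R\|_2^2 = O_{P_n}(n\sigma_n^2)$ by Markov. Moreover, $U$ is independent of $X$ with i.i.d.\ coordinates of mean zero, variance one, and fourth moment bounded by $C_0$. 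Write $P_X$ for the orthogonal projection onto $\s(X)$ and $M_X = I_n - P_X$; using positivity of $\Sigma_n$ together with $n>p_n+1$, one checks that $X$ has full column rank $P_n^n$-a.s., so $\trace(P_X) = p_n$ and $\trace(M_X) = n-p_n$ almost surely.

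The two positivity assertions $P_n^n(\hat\sigma_n^2 = 0)=0$ and $P_n^n(t_n^2 = 0)\to 0$ follow from a single Lebesgue-measure argument. Indeed, $\{\hat\sigma_n^2 = 0\} = \{M_X(\sigma_n U + R)=0\}$ fixes $M_X U = -M_X R/\sigma_n$, forcing $U$ to lie in an affine subspace of $\R^n$ of dimension at most $p_n < n$. Similarly, $\{t_n^2 = 0\} = \{P_X Y = 0\}$ fixes $P_X U$ given $X$, so $U$ is constrained to an affine subspace of dimension $n-p_n < n$. Because $u_0$ has a Lebesgue density by the extra hypothesis of Theorem~\ref{thm:JSmisspec}, conditionally on $X$ the vector $U$ has a density on $\R^n$, and both events have $P_n^n$-probability zero.

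For the two-sided boundedness of $\hat\sigma_n^2/\sigma_n^2$, I would argue as follows. Non-expansiveness of $M_X$ gives $\hat\sigma_n^2(n-p_n) \le 2\|R\|_2^2 + 2\sigma_n^2\|U\|_2^2$; taking $P_n^n$-expectations yields $\E_{P_n}[\hat\sigma_n^2]/\sigma_n^2 \le 2(D+1)n/(n-p_n) = O(1)$ uniformly in $P_n\in\mathcal P_n$, so the upper bound follows from Markov. For the lower bound I would expand
\[
\hat\sigma_n^2(n-p_n) \;=\; \|M_X R\|_2^2 + 2\sigma_n R'M_X U + \sigma_n^2 U'M_X U,
\]
and treat the three pieces separately. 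The first is nonnegative. The cross term has conditional (on $X$) mean zero and variance $4\sigma_n^2\|M_X R\|_2^2 \le 4\sigma_n^2\|R\|_2^2 = O_{P_n}(n\sigma_n^4)$, so after dividing by $(n-p_n)\sigma_n^2$ it is $o_{P_n}(1)$. Conditionally on $X$, $U'M_X U/(n-p_n)$ has mean one, and by the standard quadratic-form variance formula for entries with bounded fourth moments its conditional variance is $O(1/(n-p_n))$, so it converges to one in $P_n$-probability. Combining, $\hat\sigma_n^2/\sigma_n^2 \ge 1 + o_{P_n}(1)$, giving $\sigma_n^2/\hat\sigma_n^2 = O_{P_n}(1)$.

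The remaining bound $P_n^n(t_n^2 \ge \kappa/2)\to 1$ under $\delta_n\to\delta\in[0,\infty)$ is handled by the analogous decomposition
\[
\|P_X Y\|_2^2 \;=\; \|X\beta_n + P_X R\|_2^2 + 2\sigma_n (X\beta_n+P_X R)'U + \sigma_n^2 U'P_X U.
\]
The first term is nonnegative; the cross term has conditional mean zero and conditional variance $4\sigma_n^2\|X\beta_n + P_X R\|_2^2 = O_{P_n}(n\sigma_n^4)$, where the bound uses $\E_{P_n}[\|X\beta_n\|_2^2/n] = \sigma_n^2\delta_n^2 = O(\sigma_n^2)$ (from $\delta_n\to\delta$) together with $\|P_X R\|_2\le\|R\|_2$. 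After division by $n\sigma_n^2$ this cross term is $o_{P_n}(1)$. Conditionally on $X$, $U'P_X U/n$ has mean $p_n/n\to\kappa$ and vanishing conditional variance, so $t_n^2 \ge U'P_X U/n + o_{P_n}(1) = \kappa + o_{P_n}(1)$, exceeding $\kappa/2$ on an event of probability tending to one. The leave-one-out statements follow by the same arguments applied to $(X_{[1]},Y_{[1]})$, with $n$ replaced by $n-1$ and the ratios $(n-1-p_n)/(n-p_n)\to 1$ and $p_n/(n-1)\to\kappa$ absorbing the change. The main obstacle throughout is maintaining $P_n$-uniformity of the various $O$- and $o$-terms, but all inequalities invoked (Markov, and Chebyshev for quadratic forms with bounded fourth moments) depend only on the absolute constants $c_0, C_0, D$ and the fixed distributions $\mathcal L_l, \mathcal L_v$ from Condition~\ref{c.non-linmod}, so uniformity is automatic.
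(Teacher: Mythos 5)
Your overall strategy -- decompose $Y = X\beta_n + R + \sigma_n U$, control each quadratic form through its conditional mean and variance given $X$, and invoke Markov/Chebyshev with constants depending only on $c_0$, $C_0$, $D$ -- is essentially the paper's own argument, and the bounds for $\hat\sigma_n^2/\sigma_n^2$ (both directions), for $t_n^2\ge\kappa/2$, and the leave-one-out transfer all go through as you describe. There is, however, one genuine flaw: the claim that ``$X$ has full column rank $P_n^n$-a.s.'' is false under Condition~(C2). The distribution $\mathcal L_v$ is only required to have mean zero, unit variance and finite fourth moment; if, say, $\mathcal L_v = \mathrm{Unif}\{-1,1\}$ (a case the paper explicitly entertains), two columns of $V$ coincide with probability $2^{-n}>0$, and $X=\Lambda V\Sigma_n^{1/2}$ is rank-deficient with positive probability; $X=0$ itself has probability $\mathcal L_v(\{0\})^{np_n}$, which can be positive. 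This breaks your treatment of $\{t_n^2=0\}=\{P_XY=0\}$: when $X=0$ the constraint $P_XU = -\;(X\beta_n+P_XR)/\sigma_n$ is vacuous rather than confining $U$ to a proper affine subspace, so the event has conditional probability $\mathbbm 1\{X=0\}$ and your conclusion $P_n^n(t_n^2=0)=0$ is too strong. The lemma only asserts $P_n^n(t_n^2=0)\to 0$, and the paper proves exactly that by reducing the event to $\{X=0\}$ and computing $P_n^n(X=0)=(\mathcal L_v(\{0\}))^{np_n}\to 0$. (Your argument for $P_n^n(\hat\sigma_n^2=0)=0$ survives, since there the solution set always has dimension at most $p_n<n$.)

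The same rank issue touches your identities $\trace(P_X)=p_n$ and $\trace(M_X)=n-p_n$ ``almost surely'': these hold only on the event $\{\rank(X)=p_n\}$. For the lower bound on $\hat\sigma_n^2$ this is harmless (the conditional mean of $U'M_XU/(n-p_n)$ is $(n-\rank X)/(n-p_n)\ge 1$, which only helps), but for $t_n^2\ge\kappa/2$ you need the conditional mean $\rank(X)/n$ of $U'P_XU/n$ to converge to $\kappa$, which requires an appeal to Lemma~\ref{lemma:traceConv} (Bai--Yin: $\lmin(V'V/n)$ is bounded away from zero with probability tending to one, uniformly), exactly as the paper does. With these two repairs -- replacing the exact nullity claim for $\{t_n^2=0\}$ by the $\{X=0\}$ computation, and replacing the exact trace identities by their asymptotic versions via Lemma~\ref{lemma:traceConv} -- your proof is correct and coincides with the paper's.
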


The quantity of interest in the first claim of the theorem can be bounded as
\begin{align}
\left\| \Sigma_n^{1/2}\left(\hat{\beta}_n(c_n) - \beta_n\right)/\sigma_n\right\|_2
&=
\left\| \Sigma_n^{1/2}s_n\left(\hat{\beta}_n - \beta_n\right)/\sigma_n + \Sigma_n^{1/2}(s_n-1)\beta_n/\sigma_n\right\|_2\notag\\
&\le
\left\| \Sigma_n^{1/2}\left(\hat{\beta}_n - \beta_n\right)/\sigma_n\right\|_2 
 +  (1-s_n) \delta_n.\label{eq:s_ndelta_n}
\end{align}
Therefore, by Lemma~\ref{lemma:OLSmisspec}, it remains to show that $\limsup_{n\to\infty}Q_n(M)\to0$ as $M\to\infty$, where $Q_n(M) = \P_n((1-s_n)\delta_n>M)$. For fixed $M\in (1,\infty)$ and fixed $n\in\N$, we distinguish the cases $\delta_n< M^{1/2}$ and $\delta_n\ge M^{1/2}$. In the former case, $Q_n(M) = 0$. In the latter case, we proceed as follows. First, notice that
\begin{align}
Q_n(M) &= \P_n\left((1-s_n)\delta_n>M, t_n^2>0\right)\le
\P_n\left( \frac{p_n}{n} \frac{c_n}{t_n^2} \frac{\hat{\sigma}_n^2}{ \sigma_n^2} \delta_n > M, t_n^2>0\right)
\notag\\
&=
\P_n\left( \frac{p_n}{n} \frac{c_n}{t_n^2/\delta_n^2} \frac{\hat{\sigma}_n^2}{ \sigma_n^2} > M, t_n^2>0\right).\label{eq:boundQn}
\end{align} 
Furthermore, we trivially have $Y = X\beta_n + \sigma_n\tilde{v}$, where $\tilde{v} := (Y-X\beta_n)/\sigma_n$ has components $\tilde{v}_i = (\mu_n(x_i)-\beta_n'x_i)/\sigma_n + (y_i-\mu_n(x_i))/\sigma_n$, and, using the reverse triangle inequality and the notation $\|a\|_{\Pi_X} = \sqrt{a'\Pi_X a}$, we have
\begin{align*}
t_n &= \sqrt{\frac{1}{n}\frac{Y'\Pi_XY}{\sigma_n^2} }
=\|X\beta_n + \sigma_n \tilde{v}\|_{\Pi_X}(n\sigma_n^2)^{-1/2}\\
&\ge
\left|\|X\beta_n\|_{\Pi_X} - \|\sigma_n \tilde{v}\|_{\Pi_X}\right|(n\sigma_n^2)^{-1/2}\\
&=
\left|
\sqrt{\frac{\beta_n'\Sigma_n^{1/2}(\tilde{X}'\tilde{X}/n)\Sigma_n^{1/2}\beta_n}{\sigma_n^2}} - \sqrt{\frac{\tilde{v}'\Pi_X\tilde{v}}{n}}
\right|,
\end{align*}
where $\tilde{X}:= (\tilde{x}_1,\dots, \tilde{x}_n)' := X\Sigma_n^{-1/2}$ and $\Pi_X := X(X'X)^\dagger X'$.
Therefore, on the event 
$$
A_n(M) = \{\|\tilde{v}/\sqrt{n}\|_2^2 \le M^{1/2}, \lmin(\tilde{X}'\tilde{X}/n)> c_0^2(1-\sqrt{\kappa})^2/2>M^{-1/2}\},
$$ 
we have
$\tilde{v}'\Pi_X\tilde{v}(n\delta_n^2)^{-1} \le M^{-1/2}$ (because $\delta_n\ge\sqrt{M}$) and
$$
\beta_n'\Sigma_n^{1/2}(\tilde{X}'\tilde{X}/n)\Sigma_n^{1/2}\beta_n(\sigma_n^2\delta_n^2)^{-1} 
> 
c_0^2(1-\sqrt{\kappa})^2/2
>
M^{-1/2},
$$
so that on this event $t_n/\delta_n \ge c_0(1-\sqrt{\kappa})/\sqrt{2} - M^{-1/4}\ge0$.
Thus, turning back to \eqref{eq:boundQn} and using Markov's inequality, we obtain
\begin{align*}
&\P_n\left( \frac{p_n}{n} \frac{c_n}{t_n^2/\delta_n^2} \frac{\hat{\sigma}_n^2}{ \sigma_n^2} > M, t_n^2>0\right)
\le
\P_n\left( \frac{\hat{\sigma}_n^2}{ \sigma_n^2}> M t_n^2/\delta_n^2, t_n^2>0\right)\\
&\quad\le
\P_n\left( \frac{\hat{\sigma}_n^2}{ \sigma_n^2} > M t_n^2/\delta_n^2, A_n(M)\right)
+
\P_n\left( A_n(M)^c\right)\\
&\quad\le
\P_n\left(\frac{\hat{\sigma}_n^2}{ \sigma_n^2} > M\left(c_0(1-\sqrt{\kappa})/\sqrt{2} - M^{-1/4}\right)^2  \right)
+ \frac{2D+1}{M^{1/2}} \\
&\quad\quad+ \P_n\left( \lmin(\tilde{X}'\tilde{X}/n) \le c_0^2(1-\sqrt{\kappa})^2/2\right)
+ \P_n(c_0^2(1-\sqrt{\kappa})^2/2 \le M^{-1/2}),
\end{align*}
for sufficiently large $n$.
In view of Lemma~\ref{lemma:traceConv}\ref{lemma:traceConvA} in Appendix~\ref{sec:proofsaux} and $P_n$-boundedness of $\hat{\sigma}_n^2 /\sigma_n^2$ (Lemma~\ref{lemma:JSmisspec}), the limit superior of the upper bound is equal to a function $Q(M)\ge0$ that vanishes as $M\to\infty$. 
Therefore, we have shown that $\limsup_{n\to\infty} Q_n(M) \le Q(M) \to 0$ as $M\to\infty$.

To establish the claim about the stability of $\hat{\beta}_n(c_n)$ we proceed in a similar way. First, note that
\begin{align*}
&\|\Sigma_n^{1/2}(\hat{\beta}_n(c_n) - \hat{\beta}_n^{[1]}(c_n))\|_2/\sigma_n
=
\|(s_n- s_n^{[1]})\Sigma_n^{1/2}\hat{\beta}_n  + s_n^{[1]}\Sigma_n^{1/2}(\hat{\beta}_n - \hat{\beta}_n^{[1]})\|_2/\sigma_n\\
&\,\le
|s_n - s_n^{[1]}| \|\Sigma_n^{1/2} \hat{\beta}_n/\sigma_n\|_2 
 + |s_n^{[1]}|\|\Sigma_n^{1/2}(\hat{\beta}_n - \hat{\beta}_n^{[1]})/\sigma_n\|_2\\
&\,\le
|s_n - s_n^{[1]}| \|\Sigma_n^{1/2} (\hat{\beta}_n - \beta_n)/\sigma_n\|_2 
+ 
|s_n - s_n^{[1]}| \delta_n 
+ 
|s_n^{[1]}|\|\Sigma_n^{1/2}(\hat{\beta}_n - \hat{\beta}_n^{[1]})/\sigma_n\|_2,
\end{align*}
where $s_n^{[1]}$ is defined like $s_n$, but using $t_{n,[1]}$ and $\hat{\sigma}_{n,[1]}^2$. In view of Lemma~\ref{lemma:OLSmisspec}, it is easy to see that it remains to show that $|s_n - s_n^{[1]}|(1+\delta_n) = o_{P_n}(1)$. We argue along subsequences. Let $n'$ be an arbitrary subsequence of $n$. Then by compactness of the extended real line, there exists a further subsequence $n''$ of $n'$, such that $\delta_{n''}\to\delta\in[0,\infty]$. If we can show that for every $\eps>0$
$$
\P_{n''}(|s_{n''} - s_{n''}^{[1]}|(1+\delta_{n''})>\eps) \xrightarrow[n''\to\infty]{} 0,
$$
then the claim follows. For simplicity, we write $n$ instead of $n''$ and we distinguish the cases $\delta=\infty$ and $\delta\in[0,\infty)$. 

If $\delta=\infty$, then it suffices to show that $(s_n-s_n^{[1]})\delta_n$ converges to zero in $P_n$-probability. By Lemma~\ref{lemma:JSmisspec} we have $\P_n(t_n^2=0)\to0$ and the same holds for $t_{n,[1]}^2$, such that it suffices to show that
$$
\P_n(|s_n-s_n^{[1]}|\delta_n > \eps, t_n>0, t_{n,[1]}>0) \to 0.
$$
If $t_n>0$, set $r_n = \frac{p_n}{n}\frac{c_n}{t_n^2}\frac{\hat{\sigma}_n^2}{\sigma_n^2}$, such that $s_n = (1-r_n)_+$ on this event, and define $r_{n}^{[1]} = \frac{p_n}{n}\frac{c_n}{t_{n,[1]}^2}\frac{\hat{\sigma}_{n,[1]}^2}{\sigma_n^2}$, provided that $t_{n,[1]}>0$. Thus, if both $t_n$ and $t_{n,[1]}$ are positive, we have 
$$
|s_n-s_n^{[1]}|\delta_n\le |r_n - r_{n}^{[1]}|\delta_n
\le
\left|
\frac{\delta_n^2}{t_n^2}\frac{\hat{\sigma}_n^2}{ \sigma_n^2} - \frac{\delta_n^2}{t_{n,[1]}^2}\frac{\hat{\sigma}_{n,[1]}^2}{ \sigma_n^2}
\right|\frac{1}{\delta_n}.
$$
But in the first part of the proof we have already established that $t_n^2/\delta_n^2$ is lower bounded by $c_0^2(1-\sqrt{\kappa})^2/4$ with asymptotic probability one, provided that $\delta_n^2\to\infty$ (recall the case $\delta_n\ge M^{1/2}$ and the set $A_n(M)$, and let $M=\delta_n^2\to\infty$), and an analogous argument applies to $t_{n,[1]}^2/\delta_n^2$. Thus, it follows from the $P_n$-boundedness of $\hat{\sigma}_n^2 /\sigma_n^2$ and $\hat{\sigma}_{n,[1]}^2 /\sigma_n^2$ that the upper bound in the previous display converges to zero in $P_n$-probability.

If $\delta\in[0,\infty)$, it suffices to show that $|s_n-s_n^{[1]}|$ converges to zero in $P_n$-probability. As before, we restrict to the event $\{t_n>0, t_{n,[1]}>0\}$. Note that due to the positive part mapping in the definition of $s_n$, the absolute difference $|s_n-s_n^{[1]}|$ vanishes if both $r_n$ and $r_n^{[1]}$ are greater than or equal to $1$, and is otherwise bounded by $|r_n-r_n^{[1]}| \le \max(|r_n/r_n^{[1]}-1|, |r_n^{[1]}/r_n - 1|)$, provided that $r_n$ and $r_n^{[1]}$ are positive. Thus, it remains to verify that $r_n^{[1]}/r_n$ converges to $1$ in $\P_n$-probability and that both $\P_n(r_n=0)$ and $\P_n(r_n^{[1]}=0)$ converge to zero. The latter statement follows from Lemma~\ref{lemma:JSmisspec}, in fact it shows that $\P_n(r_n=0)=0=\P_n(r_n^{[1]}=0)$ provided that $n\ge p_n+2$. Finally, to show that $r_n^{[1]}/r_n \to 1$ in $P_n$-probability, define $S_{[1]} := \tilde{X}_{[1]}'\tilde{X}_{[1]} = \sum_{i=2}^n \tilde{x}_i\tilde{x}_i'$ and note that by the Sherman-Morrison formula (see also the proof of Lemma~\ref{lemma:OLSmisspec} below) we have
\begin{align*}
\hat{\beta}_n'X'X\hat{\beta}_n 
&=
\hat{\beta}_n^{[1]'}X_{[1]}'X_{[1]}\hat{\beta}_n^{[1]}
+
(x_1'\hat{\beta}_n^{[1]})^2 
+ 2x_1'\hat{\beta}_n^{[1]}(y_1-x_1'\hat{\beta}_n^{[1]})\\
&\quad\quad+
(y_1-x_1'\hat{\beta}_n^{[1]})^2 \frac{\tilde{x}_1'S_{[1]}^{-1}\tilde{x}_1}{1+\tilde{x}_1'S_{[1]}^{-1}\tilde{x}_1} \\
&\le \hat{\beta}_n^{[1]'}X_{[1]}'X_{[1]}\hat{\beta}_n^{[1]}
+ y_1^2,
\end{align*}
at least on the event $B_n:=\{\lmin(S_{[1]})>0\}$, which has asymptotic $\P_n$-probability one by Lemma~\ref{lemma:traceConv}.
Thus, on $B_n$, $t_n^2/t_{n,[1]}^2 = 1 + g_n$, where 
\begin{align}\label{eq:g_nBound}
|g_n| \le \frac{y_1^2}{\hat{\beta}_n^{[1]'}X_{[1]}'X_{[1]}\hat{\beta}_n^{[1]}}.
\end{align}
By Lemma~\ref{lemma:JSmisspec}, and since $\kappa>0$, $\hat{\beta}_n^{[1]'}X_{[1]}'X_{[1]}\hat{\beta}_n^{[1]}/(n\sigma_n^2) = t_{n,[1]}^2$ is bounded away from zero with asymptotic probability one. Thus, for the desired convergence of $t_n^2/t_{n,[1]}^2$ to $1$, it remains to show that the numerator in \eqref{eq:g_nBound} divided by $n\sigma_n^2$ converges to zero in $P_n$-probability. But this now follows from Condition~\ref{c.non-linmod}, assumption~\eqref{eq:JSMisspecBound} and the fact that $\delta<\infty$. The proof is finished if we can also show that $\hat{\sigma}_n^2/\hat{\sigma}_{n,[1]}^2$ converges to $1$ in $P_n$-probability. To this end, we apply the Sherman-Morrison formula once more to get
\begin{align*}
I_n - \Pi_X = I_n - \Pi_{\tilde{X}}  
= \begin{pmatrix}
\frac{1}{1+\tilde{x}_1'S_{[1]}^{-1}\tilde{x}_1}, &-\frac{\tilde{x}_1'S_{[1]}^{-1}\tilde{X} _{[1]}'}{1+\tilde{x}_1'S_{[1]}^{-1}\tilde{x}_1}\\
- \frac{\tilde{X} _{[1]}S_{[1]}^{-1}\tilde{x}_1}{1+\tilde{x}_1'S_{[1]}^{-1}\tilde{x}_1}, &I_{n-1} - \Pi_{\tilde{X} _{[1]}} + \frac{\tilde{X} _{[1]}S_{[1]}^{-1}\tilde{x}_1\tilde{x}_1'S_{[1]}^{-1}\tilde{X} _{[1]}'}{1+\tilde{x}_1'S_{[1]}^{-1}\tilde{x}_1}
\end{pmatrix},
\end{align*}
on the event $B_n$. Thus, on this event,
\begin{align*}
\hat{\sigma}_n^2(n-p_n) &= Y'(I_n-\Pi_X)Y  = Y_{[1]}'(I_{n-1}-\Pi_{X_{[1]}})Y_{[1]}\\
&\quad+ \frac{(y_1-x_1'\hat{\beta}_n^{[1]})^2}{1+\tilde{x}_1'S_{[1]}^{-1}\tilde{x}_1},
\end{align*}
such that $\frac{\hat{\sigma}_n^2}{\hat{\sigma}_{n,[1]}^2} \frac{n-p_n}{n-1-p_n} =: 1 + h_n$, where
$$
|h_n| \le \frac{(y_1-x_1'\hat{\beta}_n^{[1]})^2}{(n-1-p_n)\sigma_n^2}\frac{\sigma_n^2}{\hat{\sigma}_{n,[1]}^2}.
$$
But it is easy to see that the upper bound converges to zero in $P_n$-probability by Condition~\ref{c.non-linmod}, assumption~\eqref{eq:JSMisspecBound}, Lemmas~\ref{lemma:OLSmisspec} and \ref{lemma:JSmisspec}, and because $n-p_n\to\infty$ and $\delta<\infty$. \hfill\qed

\begin{proof}[Proof of Lemma~\ref{lemma:JSmisspec}]
We use the notation $e_i := \frac{\mu_n(x_i)-x_i'\beta_n}{\sigma_n}$, $v_i := \frac{y_i- \mu_n(x_i)}{\sigma_n}$, $\check{v} := e + v$, where $e = (e_1,\dots, e_n)'$ and $v = (v_1, \dots, v_n)'$, such that $Y=X\beta_n + \sigma_n\check{v} = X\beta_n + \sigma_ne + \sigma_nv$.
For the first claim simply observe that
\begin{align*}
\frac{\hat{\sigma}_n^2}{\sigma_n^2} = \frac{Y'(I_n-\Pi_X)Y}{(n-p_n)\sigma_n^2} = \frac{n}{n-p_n}\frac{\check{v}'(I_n-\Pi_X)\check{v}}{n}
\le \frac{n}{n-p_n}\left\|\frac{\check{v}}{\sqrt{n}}\right\|_2^2, 
 \end{align*}
and that $\E_n\|\check{v}\|_2^2 = n\E_n[\check{v}_1^2] \le n(2D+1)$, for sufficiently large $n$. For boundedness of the reciprocal we first note that $\P_n(\hat{\sigma}_n^2 = 0) = \E_n[\P_n(Y'(I_n-\Pi_X)Y=0\|X)]= \P_n(I_n-\Pi_X =0) = 0$, because the conditional distribution of $Y$ given $X$ under $\P_n$ is absolutely continuous with respect to Lebesgue measure and $n\ge p_n+2$. Similarly, $\P_n(t_n=0) = \P_n(\hat{\beta}_n=0) = \E_n[\P_n((X'X)^\dagger X'Y=0\|X)] = \P_n(X(X'X)^\dagger=0) = \P_n(X=0) = (\mathcal L_w(\{0\}))^{np_n} \to 0$. Next we show that $\hat{\sigma}_n^2/\sigma_n^2$ is bounded from below by $(1-\kappa)/2$ with asymptotic probability one. To this end, note that
$$
\frac{\hat{\sigma}_n^2}{\sigma_n^2}
=
\frac{Y'(I_n-\Pi_X)Y}{(n-p_n)\sigma_n^2}
\ge
2\frac{e'(I_n-\Pi_X)v}{n} + \left\|\frac{(I_n-\Pi_X)v}{\sqrt{n}}\right\|_2^2,
$$
where the conditional expectation of the mixed term given $X$ is equal to zero and its conditional variance converges to zero in $P_n$-probability because of $\E_n[e_i^2]\le 2D$, for sufficiently large $n$. The conditional expectation of the last term in the previous display is $\trace(I_n-\Pi_X)/n = \trace(I_n-\Pi_{\tilde{X}})/n = 1-p_n/n$, with asymptotic probability one in view of Lemma~\ref{lemma:traceConv}\ref{lemma:traceConvA}. Using independence of the $v_i$ and a little algebra, its conditional variance can be computed as
\begin{align}
\Var_n\left[\frac{v'(I_n-\Pi_X)v}{n}\Big\|X\right] 
&= \frac{2\trace((I_n-\Pi_X)^2)}{n^2} + \frac{(\E_n[v_1^4]-3)}{n^2} \sum_{i=1}^n(I_n-\Pi_X)_{ii}^2
\notag\\
&\le 2\frac{n}{n^2} + \frac{(C_0+3)n}{n^2} \to 0.\label{eq:CondVarHatsigma}
\end{align}
This establishes the boundedness of $\sigma_n^2/\hat{\sigma}_n^2$.
For the remaining statement about $t_n^2$, suppose that $\delta_n^2\to\delta\in[0,\infty)$ and note that
\begin{align*}
t_n^2 &= \frac{Y'\Pi_XY}{n\sigma_n^2}
=\left\|\frac{\tilde{X}\Sigma_n^{1/2}\beta_n}{\sqrt{n\sigma_n^2}} + \frac{\Pi_Xe}{\sqrt{n}} + \frac{\Pi_Xv}{\sqrt{n}}\right\|_2^2.
\end{align*}
Abbreviate $W_n := \frac{\tilde{X}\Sigma_n^{1/2}\beta_n}{\sqrt{n\sigma_n^2}} + \frac{\Pi_Xe}{\sqrt{n}}$ and observe $t_n^2 \ge 2W_n'\Pi_Xv/\sqrt{n} + \|\Pi_Xv/\sqrt{n}\|_2^2$. The conditional expectation of the mixed term $W_n'\Pi_Xv/\sqrt{n}$ given $X$ is equal to zero, and its conditional variance is bounded by $\|W_n/\sqrt{n}\|_2^2$. But $\|W_n\|_2^2$ is bounded in $P_n$-probability, in view of the facts that $\delta<\infty$, $\E_n[\tilde{X}'\tilde{X}/n] = I_n$ and $\E_n[e_i^2]\le 2D$, for sufficiently large $n$. Thus, the mixed term is $o_{P_n}(1)$. For $\|\Pi_Xv/\sqrt{n}\|_2^2$ one easily verifies that its conditional expectation given $X$ is $\trace(\Pi_X)/n = \trace(\Pi_{\tilde{X}})/n$, which converges to $\kappa\in[0,1)$ in $P_n$-probability, because $\P_n(\lmin(\tilde{X}'\tilde{X})=0)\to0$ by Lemma~\ref{lemma:traceConv}. Furthermore, as above, its conditional variance can easily be computed as
\begin{align*}
\Var_n\left[\frac{v'\Pi_Xv}{n}\Big\|X\right] 
&= \frac{2\trace(\Pi_X^2)}{n^2} + \frac{(\E_n[v_1^4]-3)}{n^2} \sum_{i=1}^n(\Pi_X)_{ii}^2\\
&\le \frac{2p_n}{n^2} + \frac{(C_0+3)p_n}{n^2} \to 0.
\end{align*}
Thus, $\|\Pi_Xv/\sqrt{n}\|_2^2$ converges to $\kappa$, in $P_n$-probability, which establishes the asymptotic lower bound on $t_n^2$.
The results about the leave-one-out quantities can be established analogously. 
\end{proof}

\begin{proof}[Proof of Lemma~\ref{lemma:OLSmisspec}]
Fix $n\in\N$ and $P_n\in\mathcal P_n$. For simplicity, we write $\mu = \mu_{P_n}$, $\Sigma = \Sigma_{P_n}$, $\beta = \beta_{P_n}$, $\sigma^2 = \sigma_{P_n}^2$ and $\P=\P_n$, and abbreviate $\tilde{X} := X\Sigma^{-1/2}$. For $\xi>0$, consider the event $A_n := A_n(\xi) := \{\lmin(\tilde{X}'\tilde{X}/n)>\xi\}$. On this event, we observe that $\Sigma^{1/2}(\hat{\beta}_n - \beta)/\sigma = (\tilde{X}'\tilde{X})^{-1}\tilde{X}'\check{v}$, where $\check{v} = (\check{v}_1,\dots, \check{v}_n)'$, $\check{v}_i = (\mu(x_i) - \beta'x_i)/\sigma + v_i$ and $v_i = (y_i - \mu(x_i))/\sigma$, for $i=1,\dots, n$. Thus, on $A_n$, $\|\Sigma^{1/2}(\hat{\beta}_n-\beta)/\sigma\|_2^2 = \check{v}'\tilde{X}(\tilde{X}'\tilde{X})^{-2}\tilde{X}'\check{v} = \check{v}'\tilde{X}(\tilde{X}'\tilde{X})^{-1/2}(\tilde{X}'\tilde{X})^{-1}(\tilde{X}'\tilde{X})^{-1/2}\tilde{X}'\check{v} \le \|\check{v}/\sqrt{n}\|_2^2 \|(\tilde{X}'\tilde{X}/n)^{-1}\|_2$. Hence, using Condition~\ref{c.non-linmod}, we obtain 
\begin{align*}
&\P(\|\Sigma^{1/2}(\hat{\beta}_n - \beta)/\sigma\|_2^2 > M)\\
&\quad\le
\P(\|\check{v}/\sqrt{n}\|_2^2 \|(\tilde{X}'\tilde{X}/n)^{-1}\|_2 > M,A_n(\xi)) + \P(A_n(\xi)^c)\\
&\quad\le
\P(\|\check{v}/\sqrt{n}\|_2^2 /\xi > M) + \P(A_n(\xi)^c)\\
&\quad\le
\frac{\E[\check{v}_1^2]}{M\xi} + \P(\lmin(\tilde{X}'\tilde{X}/n) \le \xi) .
\end{align*}
Since $\E[\check{v}_1^2] = \E[(\mu(x_0)-\beta'x_0)^2/\sigma^2] + 1$, in view of \ref{c.non-linmod}, and because $\P(\lmin(\tilde{X}'\tilde{X}/n) \le \xi) $ does not depend on the parameters $\beta$, $\Sigma$ and $\sigma^2$, Lemma~\ref{lemma:traceConv}\ref{lemma:traceConvA'} implies the first claim if we set $\xi = c_0^2(1-\sqrt{\kappa})^2/2>0$.

For the stability property, we abbreviate $S_{[1]} = \tilde{X}_{[1]}'\tilde{X}_{[1]}$, $\tilde{\beta}_n := \Sigma^{1/2}(\hat{\beta}_{n} - \beta)/\sigma$ and $\tilde{\beta}_n^{[1]} = \Sigma^{1/2}(\hat{\beta}_{n}^{[1]} - \beta)/\sigma$, and consider the event $B_n = \{\lmin(S_{[1]})>0\}$. On this event, also $\lmin(\tilde{X}'\tilde{X}) = \lmin(S_{[1]} + \tilde{x}_1\tilde{x}_1')> 0$, where $\tilde{X} = [\tilde{x}_1,\dots, \tilde{x}_n]'$ and $\tilde{X}_{[1]} = [\tilde{x}_2,\dots, \tilde{x}_n]'$, and the Sherman-Morrison formula yields
\begin{align*}
&\tilde{\beta}_n\; =\; (\tilde{X}'\tilde{X})^{-1}\tilde{X}'\check{v}\; =\; (S_{[1]} + \tilde{x}_1\tilde{x}_1')^{-1}(\tilde{X}_{[1]}'\check{v}_{[1]} + \tilde{x}_1 \check{v}_1)\\
&\quad= 
\left(S_{[1]}^{-1} - \frac{S_{[1]}^{-1}\tilde{x}_1\tilde{x}_1'S_{[1]}^{-1}}{1 + \tilde{x}_1'S_{[1]}^{-1}\tilde{x}_1} \right) (\tilde{X}_{[1]}'\check{v}_{[1]} + \tilde{x}_1 \check{v}_1) \\
&\quad=
\tilde{\beta}_n^{[1]} - \frac{S_{[1]}^{-1}\tilde{x}_1\tilde{x}_1'\tilde{\beta}_n^{[1]}}{1 + \tilde{x}_1'S_{[1]}^{-1}\tilde{x}_1} + S_{[1]}^{-1}\tilde{x}_1\check{v}_1 - S_{[1]}^{-1}\tilde{x}_1\check{v}_1\frac{\tilde{x}_1'S_{[1]}^{-1}\tilde{x}_1}{1 + \tilde{x}_1'S_{[1]}^{-1}\tilde{x}_1}\\
&\quad=
\tilde{\beta}_n^{[1]} + \frac{S_{[1]}^{-1}\tilde{x}_1(\check{v}_1-\tilde{x}_1'\tilde{\beta}_n^{[1]})}{1 + \tilde{x}_1'S_{[1]}^{-1}\tilde{x}_1},
\end{align*}
and thus, $\|\Sigma^{1/2}(\hat{\beta}_n - \hat{\beta}_{n}^{[1]})/\sigma\|_2^2 = (1+\tilde{x}_1'S_{[1]}^{-1}\tilde{x}_1)^{-2} \tilde{x}_1'S_{[1]}^{-2}\tilde{x}_1 (\check{v}_1 - \tilde{x}_1'\tilde{\beta}_n^{[1]})^2 \le 2 (\check{v}_1^2 + (\tilde{x}_1'\tilde{\beta}_n^{[1]})^2) \tilde{x}_1'S_{[1]}^{-2}\tilde{x}_1$. Clearly, the squared error term $\check{v}_1^2$ is $P_n$-bounded in probability because $\E[\check{v}_1^2] = \E[(\mu(x_0)-\beta'x_0)^2/\sigma^2]+1$, as above; $\E[(\tilde{x}_1'\tilde{\beta}_{n}^{[1]})^2\|\tilde{\beta}_{n}^{[1]}] = \|\tilde{\beta}_{n}^{[1]}\|_2^2$ is also $P_n$-bounded in probability, by the same argument as in the first paragraph, which implies that $(\tilde{x}_1'\tilde{\beta}_{n}^{[1]})^2$ is $P_n$-bounded in probability; and $\E[\tilde{x}_1'S_{[1]}^{\dagger2}\tilde{x}_1\|S_{[1]}] = \trace S_{[1]}^{\dagger2} \to 0$, in $P_n$-probability, by Lemma~\ref{lemma:traceConv}. Therefore, we have $\P(\|\Sigma^{1/2}(\hat{\beta}_n - \hat{\beta}_{n}^{[1]})/\sigma\|_2^2>\eps, B_n) \le \P(2O_{P_n}(1)o_{P_n}(1)>\eps, B_n) \to 0$. Moreover, $\P(B_n^c) = \P(\lmin(S_{[1]})=0) \to 0$, in view of Lemma~\ref{lemma:traceConv}.
\end{proof}


\subsection{Proof of Theorem~\ref{thm:PIlength}}

We begin by stating a few more results on the OLS estimator that hold in the linear model \ref{c.linmod}. The proof is deferred to the end of the subsection.

\begin{lemma}\label{lemma:OLScorrSpec}
Under the assumptions of Theorem~\ref{thm:PIlength}, the OLS estimator $\hat{\beta}_n = (X'X)^\dagger X'Y$, satisfies
\begin{align*}
&\left\|\Sigma_P^{1/2}(\hat{\beta}_n-\beta_P)/\sigma_P\right\|_2 \xrightarrow[n\to\infty]{} \tau \quad\text{in $P_n$-probability and} \\
&\left\|\Sigma_P^{1/2}(\hat{\beta}_n-\beta_P)/\sigma_P\right\|_4 = o_{P_n}(1).
\end{align*}
Here, $\tau = \tau(\mathcal L_l, \kappa)\in [0,\infty)$ depends only on $\mathcal L_l$ and $\kappa\in[0,1)$ and has the following properties: For any $\mathcal L_l$ as in \ref{c.linmod}, $\tau(\mathcal L_l, \kappa)=0$ if, and only if, $\kappa=0$. If $\mathcal L_l(\{-1,1\})=1$, then $\tau(\mathcal L_l, \kappa) = \sqrt{\kappa/(1-\kappa)}$.
\end{lemma}

The next result will be instrumental to establish convergence of the conditional law $\P(y_0-x_0'\hat{\beta}_n\le t\| T_n)$ to the distribution of $lN\tau + v$, for $l, N, \tau, v$ as in the statement of the theorem. Notice that this also establishes the statement about the asymptotic distribution of the prediction error $y_0-x_0'\hat{\beta}_n$. Its proof is also deferred until after the main argument is finished.

\begin{lemma} \label{lemma:UnifWeak}
Fix arbitrary positive constants $\tau\in[0,\infty)$, $\delta\in(0,2]$ and $c\in(0,\infty)$ and let $(p_n)_{n\in\N}$ be a sequence of positive integers. On some probability space $(\Omega, \mathcal A, \P)$, let $v_0$ and $l_0$ be real random variables and let $W_0 = (w_{0j})_{j=1}^\infty$ be a sequence of i.i.d. real random variables such that $W_0$, $v_0$ and $l_0$ are jointly independent, $|l_0|\ge c$, $\E[l_0^2]=1$, $\E[w_{01}]=0$, $\E[w_{01}^2]=1$ and $\E[|w_{01}|^{2+\delta}]<\infty$. For $n\in\N$ and $b\in\R^{p_n}$, define $w_n = (w_{01},\dots, w_{0p_n})'$, $G(t, b) = \P(l_0 w_n'b + v_0\le t)$ and $F(t) = \P(l_0 N \tau + v_0\le t)$, where $N \stackrel{\mathcal L}{=} \mathcal N(0,1)$ is independent of $(l_0,v_0)$. Consider positive sequences $g_1, g_2:\N \to (0,1)$, such that $g_j(n)\to 0$, as $n\to\infty$, $j=1,2$. 
Suppose that one of the following cases applies:
\begin{enumerate}[(i)]
\item \label{l:UnifWeakT0} $\tau=0$ and $t\mapsto \P(v_0\le t)$ is continuous. In this case, set 
$$B_n = \{b\in\R^{p_n}: \|b\|_2 \le g_1(n)\}.$$
\item \label{l:UnifWeakT>0} $\tau>0$ and $p_n\to\infty$ as $n\to\infty$. In this case, set 
$$
B_n = \{b\in\R^{p_n} : | \|b\|_2 - \tau| \le g_1(n), b \ne 0, \|b\|_{2+\delta}/\|b\|_2 \le g_2(n) \}.$$
\item \label{l:UnifWeakGauss} $\tau>0$ and $w_{01} \stackrel{\mathcal L}{=} \mathcal N(0,1)$. In this case, set 
$$
B_n = \{b\in\R^{p_n} : | \|b\|_2 - \tau| \le g_1(n)\}.$$
\end{enumerate}
Then, using the convention that $\sup \varnothing = 0$,
\begin{align} \label{eq:Lemma:UnifWeak}
\sup_{b\in B_n} \sup_{t\in\R} \left| G(t,b) - F(t) \right| \;\xrightarrow[n\to\infty]{}\; 0.
\end{align}
\end{lemma}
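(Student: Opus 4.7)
The plan is to exploit the joint independence of $l_0$ from $(v_n, u_0, N)$ and reduce, by conditioning on $l_0$, to bounding
$$\sup_{|s|\ge c}\sup_{t\in\R}\bigl|\P(sv_n'b + u_0 \le t) - \P(s\tau N + u_0 \le t)\bigr|$$
uniformly in $b\in B_n$. In cases \ref{l:UnifWeakT>0} and \ref{l:UnifWeakGauss} I insert the intermediate term $\P(s\|b\|_2 N + u_0\le t)$ and split the difference by the triangle inequality into (a) a ``Berry--Esseen'' step replacing $v_n'b$ by $\|b\|_2 N$, and (b) a ``Gaussian scale-change'' step replacing $\|b\|_2$ by $\tau$. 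Case \ref{l:UnifWeakT0} is handled separately by Chebyshev together with uniform continuity of the CDF of $u_0$.

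For step (a), in case \ref{l:UnifWeakGauss} the difference vanishes identically because $v_n'b \stackrel{\mathcal L}{=} \|b\|_2 N$. In case \ref{l:UnifWeakT>0}, set $W_b := v_n'b/\|b\|_2 = \sum_{j=1}^{p_n} \zeta_j$ with $\zeta_j := v_{0j} b_j/\|b\|_2$, a sum of independent centered random variables with total variance $1$. Esseen's generalization of the Berry--Esseen inequality yields, for $\delta\in(0,1]$,
$$\sup_t\bigl|\P(W_b\le t) - \Phi(t)\bigr| \;\le\; C_\delta\, \E|v_{01}|^{2+\delta}(\|b\|_{2+\delta}/\|b\|_2)^{2+\delta} \;\le\; C_\delta\, \E|v_{01}|^{2+\delta}\, g_2(n)^{2+\delta};$$
for $\delta\in(1,2]$ the same conclusion follows from the $\delta = 1$ Berry--Esseen combined with the interpolation inequality $\|b\|_3 \le \|b\|_2^{\theta}\|b\|_{2+\delta}^{1-\theta}$ with $\theta = 2(\delta-1)/(3\delta)\in[0,1/3]$, which gives $\|b\|_3/\|b\|_2 \le g_2(n)^{1-\theta}$. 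Conditioning on $u_0$ and exploiting continuity of $\Phi$ (so that any atom of the law of $W_b$ at a point $y$ is bounded by $\sup_{y'}|F_{W_b}(y') - \Phi(y')|$ via $|F_{W_b}(y-) - \Phi(y)| = \lim_{y'\uparrow y}|F_{W_b}(y') - \Phi(y')|$), this transfers to $\sup_{s\ne0,\,t}|\P(sv_n'b + u_0\le t) - \P(s\|b\|_2 N + u_0\le t)| = o(1)$ uniformly in $b\in B_n$.

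For step (b) I couple $\|b\|_2 N$ and $\tau N$ through the same $N$, so that $|s\|b\|_2 N - s\tau N| \le |s|g_1(n)|N|$. Since $s\tau N + u_0$ has a Lebesgue density bounded by $1/(|s|\tau\sqrt{2\pi})$ (using $|s|\tau \ge c\tau > 0$ and the fact that independent addition does not increase the sup of a density), the standard coupling bound gives, for any $\eps>0$,
$$\bigl|\P(s\|b\|_2 N + u_0\le t) - \P(s\tau N + u_0\le t)\bigr| \;\le\; \frac{2\eps}{|s|\tau\sqrt{2\pi}} + \P\bigl(|N|>\eps/(|s|g_1(n))\bigr),$$
and $\eps = |s|g_1(n)^{1/2}$ reduces this to the $s$-free bound $2g_1(n)^{1/2}/(\tau\sqrt{2\pi}) + \P(|N| > g_1(n)^{-1/2})$, which tends to $0$. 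Integrating over $\P_{l_0}$ then completes cases \ref{l:UnifWeakT>0} and \ref{l:UnifWeakGauss}. For case \ref{l:UnifWeakT0} with $\tau = 0$ and $F(t) = \P(u_0\le t)$, Chebyshev gives $\P(|l_0 v_n'b|>\eps) \le \|b\|_2^2/\eps^2 \le g_1(n)^2/\eps^2$, and the continuous CDF of $u_0$ is automatically uniformly continuous on $\R$ (bounded monotone with limits $0,1$ at $\pm\infty$); hence $|G(t,b)-F(t)| \le 2g_1(n)^2/\eps^2 + \sup_t\bigl(\P(u_0\le t+\eps)-\P(u_0\le t-\eps)\bigr)$, both terms vanishing at $\eps = g_1(n)^{1/2}$.

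The hard part will be the Lyapounov--Berry--Esseen estimate in case \ref{l:UnifWeakT>0}, in particular accommodating $\delta\in(1,2]$ via the $\ell^p$-interpolation inequality; the rest of the argument is routine coupling and density-bound manipulation.
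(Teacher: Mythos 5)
Your proposal is correct, but it takes a genuinely different and more quantitative route than the paper. The paper's proof is soft: for each case it fixes an \emph{arbitrary} sequence $b_n\in B_n$, shows $l_0 v_n'b_n+u_0$ converges weakly to the limit (via Chebyshev in case (i), via the exact identity $v_n'b\stackrel{\mathcal L}{=}\|b\|_2N$ in case (iii), and via Lyapounov's CLT for the triangular array $z_{nj}=b_{nj}v_{0j}$ in case (ii), where the Lyapounov ratio is exactly $\E[|v_{01}|^{2+\delta}](\|b_n\|_{2+\delta}/\|b_n\|_2)^{2+\delta}\le \E[|v_{01}|^{2+\delta}]g_2(n)^{2+\delta}$), and then upgrades pointwise to uniform-in-$t$ convergence by Polya's theorem, the uniformity over $B_n$ being automatic because the sequence was arbitrary. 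You instead condition on $l_0$, insert the intermediate law of $s\|b\|_2N+u_0$, and control the two pieces by an explicit Esseen-type Berry--Esseen bound (with the $\ell^p$-interpolation trick to handle $\delta\in(1,2]$, which is needed since the classical $(2+\delta)$-moment bound is stated for $\delta\le1$) and by a coupling/density argument for the scale change $\|b\|_2\to\tau$; your treatment of the left-limit issue when $s\|b\|_2<0$ and your $s$-free choice $\eps=|s|g_1(n)^{1/2}$ are both handled correctly. What your approach buys is an explicit rate for \eqref{eq:Lemma:UnifWeak} in terms of $g_1$, $g_2$ and $\E[|v_{01}|^{2+\delta}]$, at the cost of invoking heavier quantitative machinery; the paper's argument is shorter and needs only the qualitative CLT plus the arbitrary-sequence device. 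Both are complete proofs of the stated claim.
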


We now turn to the proof of Theorem~\ref{thm:PIlength}.
In order to achieve uniformity in $P_n\in\mathcal P_n^{{lin}}$, we consider sequences of parameters $\beta_n\in\R^{p_n}$, $\sigma_n^2 \in(0,\infty)$ and $\Sigma_n\in \mathcal S_{p_n}$ (where $\mathcal S_{p_n}$ is the set of all symmetric, positive definite $p_n\times p_n$ matrices). All the operators $\E$, $\Var$ and $\Cov$ are to be understood with respect to $\P_n$.

We have to show that, for arbitrary but fixed $\alpha\in[0,1]$, $\hat{q}_\alpha/\sigma_n$ converges in $P_n$-probability to $q_\alpha$, the $\alpha$ quantile of the distribution of $l N \tau + v$, the cdf of which we denote by $F$. Recall that for $\alpha\in[0,1]$, $\hat{q}_\alpha = \hat{F}_n^\dagger(\alpha) := \inf\{t\in\R:\hat{F}_n(t)\ge \alpha\}$. We treat the case $\alpha\in\{0,1\}$ separately at the end of the proof, because $q_1=-q_0=\infty$.
To deal with the empirical quantiles we use a standard argument. For $\alpha\in(0,1)$ and $\eps>0$, consider
$$
\P_n(|\hat{q}_\alpha/\sigma_n - q_{\alpha}| > \eps)
=
\P_n(\hat{q}_\alpha/\sigma_n > q_{\alpha} + \eps)
+
\P_n(\hat{q}_\alpha/\sigma_n < q_{\alpha} - \eps).
$$
To bound the first probability on the right, abbreviate $J_i := \mathbbm 1_{\{\hat{u}_i/\sigma_n > q_{\alpha} + \eps\}}$ and note that by definition of the OLS predictor, the leave-one-out residuals $\hat{u}_i = y_i - x_i'(X_{[i]}'X_{[i]})^\dagger X_{[i]}'Y_{[i]}$, $i=1,\dots, n$, and thus also the $J_i$, $i=1,\dots, n$, are exchangeable under $\P_n$. A basic property of the quantile function $\hat{F}_n^\dagger$ \citep[cf.][Lemma 21.1]{vanderVaart07} yields
\begin{align*}
\P_n(\hat{q}_\alpha/\sigma_n &> q_{\alpha} + \eps)
= \P_n\left(\alpha > \hat{F}_n(\sigma_n(q_{\alpha} + \eps))\right)\\
&= \P_n\left(1-\hat{F}_n(\sigma_n(q_{\alpha} + \eps)) > 1-\alpha \right)\\
&=
\P_n\left(\frac{1}{n}\sum_{i=1}^n (J_i - \E[J_1]) > 1-\alpha - \E[J_1] \right)\\
&=
\P_n\left(\frac{1}{n}\sum_{i=1}^n (J_i - \E[J_i]) >  F_n(q_{\alpha} + \eps) - \alpha \right),
\end{align*}
where $F_n(t) := \P_n(\hat{u}_1/\sigma_n\le t)$ is the marginal cdf of the scaled leave-one-out residuals. If we can show that
\begin{align}\label{eq:PIlengthToShow1}
F_n(t) \to F(t), \quad\quad\forall t\in\R,
\end{align}
as $n\to\infty$, then $F_n(q_\alpha+\eps) \to F(q_\alpha+\eps)>\alpha$, because $q_\alpha$ is unique, and thus the probability in the second to last display can be bounded, at least for $n$ sufficiently large, using Markov's inequality, by
\begin{align*}
&(F_n(q_{\alpha} + \eps) - \alpha)^{-2} \E\left[\left| \frac{1}{n}\sum_{i=1}^n (J_i - \E[J_i])\right|^2 \right]\\
&\quad=
(F_n(q_{\alpha} + \eps) - \alpha)^{-2} \left( 
\frac{1}{n} \Var[J_1] + \frac{n(n-1)}{n^2} \Cov(J_1,J_2)
\right),
\end{align*}
where the equality holds in view of the exchangeability of the $J_i$. An analogous argument yields a similar upper bound for the probability  $\P_n(\hat{q}_\alpha/\sigma_n \le q_{\alpha} - \eps)$ but with $(F_n(q_{\alpha} + \eps) - \alpha)^{-2}$ replaced by $(\alpha - F_n(q_{\alpha} - \eps))^{-2}$, and $J_i$ replaced by $K_i = \mathbbm 1_{\{\hat{u}_i/\sigma_n \le q_{\alpha} - \eps\}}$. The proof will thus be finished if we can establish \eqref{eq:PIlengthToShow1} and show that $\Cov(J_1,J_2)$ and $\Cov(K_1,K_2)$ converge to zero as $n\to\infty$.
We only consider $\Cov(J_1,J_2) = \Cov(1-J_1,1-J_2)$, as the argument for $\Cov(K_1,K_2)$ is analogous. Write $\delta = q_\alpha + \eps$ and 
$$
\Cov(1-J_1,1-J_2) = \P_n(\hat{u}_1/\sigma_n\le \delta, \hat{u}_2/\sigma_n \le \delta) - \P_n(\hat{u}_1/\sigma_n\le \delta)\P_n(\hat{u}_2/\sigma_n \le \delta).
$$ 
Now,
\begin{align*}
\begin{pmatrix}
\hat{u}_1/\sigma_n \\
\hat{u}_2/\sigma_n
\end{pmatrix}
=
\begin{pmatrix}
\hat{u}_{1[2[}/\sigma_n \\
\hat{u}_{2[1]}/\sigma_n
\end{pmatrix}
+
\begin{pmatrix}
\hat{e}_1 \\
\hat{e}_2
\end{pmatrix},
\end{align*}
where $\hat{u}_{i[j]} = y_i - x_i'\hat{\beta}_n^{[ij]}$, $\hat{\beta}_n^{[ij]} = (X_{[ij]}'X_{[ij]})^\dagger X_{[ij]}'Y_{[ij]}$, and $\hat{e}_i = (\hat{u}_i - \hat{u}_{i[j]})/\sigma_n = x_i'(\hat{\beta}_n^{[ij]} - \hat{\beta}_n^{[i]})/\sigma_n$, for $\{i,j\}=\{1,2\}$. Therefore, $\E[\hat{e}_i\|Y_{[i]}, X_{[i]}] = 0$, because $\E[x_i] = 0$, and $\E[\hat{e}_i^2 \| Y_{[i]},X_{[i]}] = \|\Sigma^{1/2}(\hat{\beta}_n^{[i]} - \hat{\beta}_n^{[ij]})/\sigma_n\|_2^2$, which converges to zero in $\P_n$-probability, by Lemma~\ref{lemma:OLSmisspec} (for a sample of size $n-1$ instead of $n$), which applies here because \ref{c.non-linmod} is satisfied under \ref{c.linmod}. Hence, $\hat{e}_1$ and $\hat{e}_2$ converge to zero in probability. The joint distribution function of $\hat{u}_{1[2]}/\sigma_n$ and $\hat{u}_{2[1]}/\sigma_n$ can be written as
\begin{align}
&\P_n(\hat{u}_{1[2]}/\sigma_n\le s, \hat{u}_{2[1]}/\sigma_n \le t)\label{eq:2CDF}\\
&\quad=
\E\left[ \P_n\left(x_1'(\beta_n-\hat{\beta}_n^{[12]})/\sigma_n + v_1 \le s, x_2'(\beta_n-\hat{\beta}_n^{[12]})/\sigma_n + v_2 \le t \Big\| Y_{[12]},X_{[12]}\right)\right]\notag\\
&\quad=
\E\left[ G_n\left(s, \Sigma^{1/2}(\beta_n-\hat{\beta}_n^{[12]})/\sigma_n\right) G_n\left(t, \Sigma^{1/2}(\beta_n-\hat{\beta}_n^{[12]})/\sigma_n\right) \right],\notag
\end{align}
where, for $t\in\R$ and $b\in\R^{p_n}$, $G_n$ is defined as $G_n(t, b) = P_n(b'\Sigma^{-1/2}x_0 + v_0 \le t)$. Note that $G_n$ depends only on $\mathcal L_l$, $\mathcal L_w$, $\mathcal L_v$ and on $n$, through $p_n$. If we abbreviate $\tilde{\beta}_n^{[12]} = \Sigma^{1/2}(\beta_n-\hat{\beta}_n^{[12]})/\sigma_n$ and $\tilde{\beta}_n^{[1]} = \Sigma^{1/2}(\beta_n-\hat{\beta}_n^{[1]})/\sigma_n$, we arrive at
$$
\Cov(1-J_1,1-J_2)
=
\E\left[ G_n\left(\delta,\tilde{\beta}_n^{[12]}\right)^2\right]
- \E\left[ G_n\left(\delta,\tilde{\beta}_n^{[1]}\right)\right]^2+ o(1),
$$
provided the bivariate distribution function in \eqref{eq:2CDF} converges pointwise to a continuous limit.
We finish the proof by showing that for all $t\in\R$, the bounded random variables $G_n(t,\tilde{\beta}_n^{[12]})$ and $G_n(t,\tilde{\beta}_n^{[1]})$ both converge to $F(t)$, in $\P_n$-probability, and hence, \eqref{eq:2CDF} converges to $F(s)F(t)$, which is continuous. Note that this also implies \eqref{eq:PIlengthToShow1}, because $F_n(t) = \E[ G_n(t,\tilde{\beta}_n^{[1]})]$.

To this end, we note that for an arbitrary measureable set $B_n\subseteq\R^{p_n}$ and for any $\eps>0$,
\begin{align*}
\P_n\left( \sup_{t\in\R}\left| G_n(t,\tilde{\beta}_n^{[1]}) - F(t)\right| > \eps \right) 
&\le
\P_n\left( \sup_{t\in\R}\left| G_n(t,\tilde{\beta}_n^{[1]}) - F(t)\right| > \eps, \tilde{\beta}_n^{[1]}\in B_n \right) \\
&\hspace{1cm}+
\P_n\left( \tilde{\beta}_n^{[1]}\notin B_n \right)\\
&\le
a_n(\eps) \;+\; \P_n\left( \tilde{\beta}_n^{[1]}\notin B_n \right),
\end{align*}
where $a_n(\eps) = 1$ if $\sup_{b\in B_n} \sup_{t\in\R} \left| G_n(t,b) - F(t) \right|>\eps$, and $a_n(\eps) = 0$, else. Now, we first consider the case $\kappa=0$. Thus, Lemma~\ref{lemma:OLScorrSpec}, which also applies to $\hat{\beta}_n^{[1]}$, yields $\|\tilde{\beta}_n^{[1]}\|_2 \to \tau = 0$, as $n\to\infty$, in $P_n$-probability. Therefore, the probability in the last line of the previous display converges to zero if we take $B_n  =\{b\in\R^{p_n}: \|b\|_2 \le g_1(n)\}$ and $g_1(n)\to0$ sufficiently slowly, as $n\to\infty$. Hence, Lemma~\ref{lemma:UnifWeak}\eqref{l:UnifWeakT0} applies and shows that also $a_n(\eps)\to 0$ as $n\to\infty$, for every $\eps>0$.
If $\kappa>0$, Lemma~\ref{lemma:OLScorrSpec} yields $\|\tilde{\beta}_n^{[1]}\|_2 \to \tau>0$ and $\|\tilde{\beta}_n^{[1]}\|_4 \to 0$, in $P_n$-probability, as $n\to\infty$. Thus, the probability in the last line of the previous display converges to zero if we take $B_n = \{b\in\R^{p_n} : b \ne 0, | \|b\|_2 - \tau| \le g_1(n), \|b\|_{4}/\|b\|_2 \le g_2(n) \}$ and sequences $g_1$ and $g_2$ that converge to zero sufficiently slowly. Now Lemma~\ref{lemma:UnifWeak}\eqref{l:UnifWeakT>0} shows that also $a_n(\eps)\to 0$ as $n\to\infty$, for every $\eps>0$. The same argument applies to $\tilde{\beta}_n^{[12]}$ instead of $\tilde{\beta}_n^{[1]}$, which finishes the proof in the case $\alpha\in(0,1)$.

Next, we treat the case $\alpha=0$. In either case of the theorem, we have $\lim_{\gamma\to 0}q_\gamma = q_0=-\infty$. By definition, $\hat{q}_0 \le \hat{q}_\gamma$, for any $\gamma\in(0,1)$. Thus, for any $M>0$, there exists a $\gamma\in(0,1)$, such that $q_\gamma<-2M$ and $\P_n(\hat{q}_0/\sigma_{P_n}<-M) \ge  \P_n(\hat{q}_\gamma/\sigma_{P_n}<-M) \to 1$, as $n\to\infty$, in view of the first part. In other words, $\hat{q}_0/\sigma_{P_n}$ converges to $-\infty = q_0$ in $P_n$-probability. A similar argument can be used to treat the case $\alpha=1$. \hfill\qed

\begin{proof}[Proof of Lemma~\ref{lemma:OLScorrSpec}]
On the event $\{\lmin(\tilde{X}'\tilde{X})>0\}$, which has asymptotic probability one in view of Lemma~\ref{lemma:traceConv}\ref{lemma:traceConvA'}, notice the identity 
$$\Sigma_P^{1/2}(\hat{\beta}_n - \beta_P)/\sigma_P = (\tilde{X}'\tilde{X})^\dagger\tilde{X}'v,$$ 
where $v = (v_1,\dots, v_n)' = (Y-X\beta_P)/\sigma_P$. Thus, the distribution under $P\in\mathcal P_n$ of the quantity of interest does not depend on the parameters $\beta_P$, $\sigma_P^2$ and $\Sigma_P$. Hence, without loss of generality, we assume for the rest of this proof that $\beta_P=0$, $\sigma_P^2=1$ and $\Sigma_P = I_{p_n}$. 
First, we have to show that $\|\hat{\beta}_n\|_2 \to \tau \in [0,\infty)$, in probability, for a $\tau = \tau(\mathcal L_l, \kappa)$ with the properties mentioned in the lemma. To this end, consider the conditional mean
$$
\E\left[\|\hat{\beta}_n\|_2^2\Big\| X\right] = \trace (X'X)^\dagger X'X (X'X)^\dagger = \trace (X'X)^\dagger \xrightarrow[]{a.s.} \tau^2,
$$
by Lemma~\ref{lemma:traceConv}\ref{lemma:traceConvC} and for $\tau$ as desired (cf. Remark~\ref{rem:tau}). From the same lemma we get convergence of the conditional variance
\begin{align*}
\Var\left[\|\hat{\beta}_n\|_2^2\Big\| X\right] &= \Var\left[v'X(X'X)^{\dagger2} X'v\Big\|X\right] =: \Var[v'Kv\|X] \\
&=
2\trace K^2 + (\E[v_1^4] - 3)\sum_{i=1}^n K_{ii}^2\\
&\le
2\trace K^2 + (\E[v_1^4] + 3)\sum_{i,j=1}^n K_{ij}^2
=
(\E[v_1^4] + 5)\trace K^2 \\
&= 
(\E[v_1^4] + 5)
\trace X(X'X)^{\dagger2}X'X(X'X)^{\dagger2}X' \\
&= (\E[v_1^4] + 5)
\trace (X'X)^{\dagger2} \xrightarrow[]{a.s.} 0. 
\end{align*}
For the second claim it suffices to show that $\|\hat{\beta}_n\|_4^4 \to 0$, in probability. Notice that for $M := (m_1, \dots, m_{p_n})' := (X'X)^\dagger X'$, we have
\begin{align*}
\|\hat{\beta}_n\|_4^4 = \|Mv\|_4^4 = \sum_{j=1}^{p_n} (m_j'v)^4
=
\sum_{j=1}^{p_n} \sum_{i_1,i_2,i_3,i_4=1}^n m_{ji_1}m_{ji_2}m_{ji_3}m_{ji_4}v_{i_1}v_{i_2}v_{i_3}v_{i_4}.
\end{align*}
After taking conditional expectation given $X$, only terms with paired indices remain and we get
\begin{align*}
\E\left[\|\hat{\beta}_n\|_4^4\Big\|X\right] &= \sum_{j=1}^{p_n}\left( \E[v_1^4] \sum_{i=1}^n m_{ji}^4 + 3 \sum_{i\ne k}^n m_{ji}^2m_{jk}^2\right)\\
&\le
\sum_{j=1}^{p_n}\left( \E[v_1^4] \sum_{i,k=1}^n m_{ji}^2m_{jk}^2 + 3 \sum_{i, k=1}^n m_{ji}^2m_{jk}^2\right)\\
&=
(\E[v_1^4] + 3) \sum_{j=1}^{p_n} (m_j'm_j)^2
\le
(\E[v_1^4] + 3) \trace \sum_{i,j=1}^{p_n} m_im_i'm_jm_j'\\
&=
(\E[v_1^4] + 3) \trace (M'M)^2 
= 
(\E[v_1^4] + 3) \trace (X'X)^{\dagger2} \xrightarrow[]{a.s.} 0,
\end{align*}
by Lemma~\ref{lemma:traceConv}\ref{lemma:traceConvB}.
\end{proof}

\begin{proof}[Proof of Lemma~\ref{lemma:UnifWeak}]
First, in the case \eqref{l:UnifWeakT0}, for every $n\in\N$, take $b_n \in B_n =\{b\in\R^{p_n}: \|b\|_2 \le g_1(n)\}$ and simply note that $l_0b_n'w_n \to 0$, in probability, and thus $G(t,b_n) \to F(t)$ weakly. Since the limit is continuous, Polya's theorem yields uniform convergence in $t\in\R$. Since the sequence $b_n\in B_n$ was arbitrary, we also get uniform convergence over $B_n$.

Next, we consider the Gaussian case \eqref{l:UnifWeakGauss}, so $B_n = \{b\in\R^p : | \|b\|_2 - \tau| \le g_1(n)\}$. For every $n\in\N$, let $b_n\in B_n$ be arbitrary, and note that $t\mapsto G(t,b_n)$ is the distribution function of $l_0 b_n'w_n + v_0$, where $w_n \stackrel{\mathcal L}{=} \mathcal N(0,I_{p_n})$, and $l_0, w_n, v_0$ are independent. Clearly, $l_0 b_n'w_n + v_0 \stackrel{\mathcal L}{=} l_0 N \|b_n\|_2 + v_0 \to l_0 N \tau + v_0$, weakly, and this limit has continuous distribution function $F$. Hence, by Polya's theorem, $\sup_t |\P(l_0 b_n'w_n + v_0 \le t) - F(t)| \to 0$, as $n\to \infty$. And since the sequence $b_n\in B_n$ was arbitrary, the result follows.

In the general case \eqref{l:UnifWeakT>0} first note that $B_n$ may be empty. By our convention that $\sup \varnothing = 0$ it suffices to restrict to the subsequence $n'$ for which $B_{n'}\ne \varnothing$. If this is only a finite sequence, then the result is trivial. For convenience, we write $n=n'$.
So let $b_n\in B_n$ and define the triangular array $z_{nj} := b_{nj} w_{0j}$, $j=1,\dots, p_n$, which satisfies $\E[z_{nj}]=0$ and $s_n^2 := \sum_{j=1}^p \E[z_{nj}^2] = \|b_n\|_2^2 \ne 0$. The Lyapounov condition is verified by
\begin{align*}
\sum_{j=1}^{p_n} s_n^{-(2+\delta)} \E[|z_{nj}|^{2+\delta}] 
\;&=\;
\E\left[|w_{01}|^{2+\delta}\right]\left(\frac{\|b_n\|_{2+\delta}}{\|b_n\|_2}\right)^{2+\delta}\\
\;&\le\;
\E\left[|w_{01}|^{2+\delta}\right] \left[ g_2(n)\right]^{2+\delta} 
\; \xrightarrow[n\to\infty]{} \; 0.
\end{align*}
Therefore, by Lyapounov's CLT \citep[][Theorem~27.3]{Billingsley95}, we have
$$
b_n'w_n/\|b_n\|_2 \;=\; \sum_{j=1}^{p_n} z_{nj}/s_n \;\xrightarrow[n\to\infty]{w}\; \mathcal N(0,1).
$$
Since $b_n\in B_n$, we must have $\|b_n\|_2 \to \tau$ as $n\to\infty$, and thus, $b_n'w_n = \|b_n\|_2 b_n'w_n/\|b_n\|_2 \xrightarrow[]{w} N\tau$, where $N \stackrel{\mathcal L}{=} \mathcal N(0,1)$, as $n\to\infty$, and, by independence, $l_0 b_n'w_n + v_0 \xrightarrow[]{w} l_0 N \tau + v_0$. Since the distribution function of this limit is continuous, Polya's theorem yields $\sup_t | G(t,b_n) - F(t)| \to 0$, as $n\to\infty$. Now the proof is finished because this convergence holds for arbitrary sequences $b_n\in B_n$.
\end{proof}


\section{Auxiliary results}
\label{sec:proofsaux}

\begin{lemma}\label{lemma:doubleSum}
If $a\in[0,1]^k$, $k\ge 2$, then
$$
f_k(a) := \frac{1}{k(k-1)} \sum_{i\neq j} a_i(1-a_j) \;\le\; \frac{k}{k-1} \cdot \frac14.
$$
\end{lemma}
\begin{remark}\normalfont
Note that in general the factor $\frac{k}{k-1}$ in the upper bound of Lemma~\ref{lemma:doubleSum} can not be removed. For instance, $f_2((1,0)) = \frac12$, so the bound is even tight for $k=2$, and $f_3((1,0,0)) = \frac13 > \frac14$.
\end{remark}

\begin{proof}
Simply write
\begin{align*}
f_k(a) 
&= \frac{1}{k(k-1)} \left( \sum_{i,j=1}^k a_i(1-a_j) - \sum_{i=1}^k a_i(1-a_i)\right)\\
&=
\frac{k}{k-1} \bar{a} \left( 1 - \bar{a} \right) - \frac{1}{k(k-1)}\sum_{i=1}^k a_i(1-a_i)\\
&\le
\frac{k}{k-1} \cdot \frac14,
\end{align*}
where $\bar{a} = \frac1k\sum_{i=1}^k a_i$.
\end{proof}

\begin{lemma}
\label{lemma:traceConv}
On a common probability space $(\Omega, \mathcal F, \P)$, consider an i.i.d. sequence $L_0 = \{l_i : i=1,2,\dots\}$ of random variables satisfying $|l_1|\ge c>0$, and a double infinite array $W_0 = \{w_{ij} : i,j=1,2,\dots\}$ of i.i.d. random variables with mean zero, unit variance and $\E[w_{11}^4]<\infty$, such that $L_0$ and $W_0$ are independent. For a sequence of positive integers $(p_n)$ with $p_n\le n$, consider the $n\times p_n$ random matrix $\tilde{X} = \Lambda W$, where $\Lambda = \diag(l_1,\dots, l_n)$ is diagonal and $W = \{w_{ij} : i=1,\dots, n; j=1,\dots, p_n\}$. Let $(\tilde{X}'\tilde{X})^\dagger$ denote the Moore-Penrose pseudo inverse of $\tilde{X}'\tilde{X}$. If $p_n/n\to \kappa\in[0,1)$ then the following holds:
\begin{enumerate}
	\renewcommand{\theenumi}{(\roman{enumi})}
	\renewcommand{\labelenumi}{{\theenumi}} 
	
	\item\label{lemma:traceConvA} $\liminf_{n\to\infty} \lmin (\tilde{X}'\tilde{X}/n) \ge c^2(1-\sqrt{\kappa})^2$, almost surely.
	\item\label{lemma:traceConvA'} $\lim_{n\to\infty} \P(\lmin (\tilde{X}'\tilde{X}/n) \le \eps) = 0$ for all $\eps < c^2(1-\sqrt{\kappa})^2$.
	\item\label{lemma:traceConvB} If $m>1$, then $\trace{(\tilde{X}'\tilde{X})^{\dagger m}} \to 0$, almost surely, as $n\to\infty$.
	\item \label{lemma:traceConvC} $\trace{(\tilde{X}'\tilde{X})^{\dagger}} \to \tau^2$ almost surely, as $n\to\infty$, for some constant $\tau\in [0,\infty)$ that depends only on $\kappa$ and on the distribution of $l_1^2$ and satisfies $\tau=0$ if, and only if, $\kappa=0$.
\end{enumerate}
\end{lemma}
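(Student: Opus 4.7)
The plan is to deduce all four statements from a single almost-sure lower bound on $\lambda_{\min}(V'V/n)$ together with one weak-convergence argument for part $(iv)$.

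For part $(i)$, the key observation is that $|l_i|\ge c$ almost surely forces $\Lambda^2 \succeq c^2 I_n$, so $X'X = V'\Lambda^2V \succeq c^2 V'V$ in the Loewner order and hence $\lambda_{\min}(X'X/n) \ge c^2\,\lambda_{\min}(V'V/n)$. Since the entries of $V$ are i.i.d. with mean zero, unit variance, and finite fourth moment, I would invoke the Bai--Yin theorem for sample covariance matrices, which gives $\lambda_{\min}(V'V/n) \to (1-\sqrt{\kappa})^2$ almost surely. Part $(ii)$ is immediate from $(i)$.

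For part $(iii)$, note that once $\kappa<1$ and $(i)$ holds, $X'X$ is a.s.\ invertible for all sufficiently large $n$, so $(X'X)^\dagger=(X'X)^{-1}$ eventually. The plan is then to bound
\[
\trace(X'X)^{-m} \;=\; n^{-m}\trace\bigl((X'X/n)^{-m}\bigr) \;\le\; \frac{p_n}{n^m}\,\lambda_{\min}(X'X/n)^{-m}.
\]
For $m>1$, $p_n/n^m=(p_n/n)\cdot n^{-(m-1)}\to 0$, while $\lambda_{\min}(X'X/n)^{-m}$ is eventually bounded almost surely by $(i)$, so the product vanishes a.s.

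Part $(iv)$ is the main obstacle. Write
\[
\trace(X'X)^{-1} \;=\; \frac{p_n}{n}\int \lambda^{-1}\,dF_n(\lambda),
\]
where $F_n$ is the empirical spectral distribution of $X'X/n = V'\Lambda^2V/n$. This is a sample covariance matrix with i.i.d.\ weighted rows whose weights $l_i^2$ are themselves i.i.d.\ with law $H$, so the generalized Marchenko--Pastur/Silverstein theorem yields almost sure weak convergence of $F_n$ to a deterministic limit $F_{\kappa,H}$. By $(i)$ the supports of $F_n$ are eventually contained in $[\epsilon,\infty)$ for some $\epsilon>0$ a.s.; replacing $\lambda^{-1}$ by a bounded continuous extension that equals $\epsilon^{-1}$ on $[0,\epsilon]$ then turns weak convergence into convergence of the integral, giving $\int \lambda^{-1}\,dF_n \to \int \lambda^{-1}\,dF_{\kappa,H}\in(0,\infty)$. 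Setting $\tau^2(\kappa):=\kappa\int\lambda^{-1}\,dF_{\kappa,H}$ completes the convergence. The case $\kappa=0$ is handled directly via $\trace(X'X)^{-1}\le p_n/(n\,\lambda_{\min}(X'X/n))\to 0$, and for $\kappa>0$ the integral is strictly positive since $F_{\kappa,H}$ is a probability measure on $[\epsilon,M]$, so $\tau^2(\kappa)>0$. The technical subtleties here are isolating the generalized MP citation and controlling the integrand near zero by exploiting the a.s.\ spectral gap from $(i)$.
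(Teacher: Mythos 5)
Your proposal is correct and follows essentially the same route as the paper's proof: the Loewner bound $X'X\succeq c^2V'V$ combined with the Bai--Yin theorem for (i)--(iii), and the generalized Mar\v{c}enko--Pastur/Silverstein limit for the ESD of $V'\Lambda^2V/n$ together with a truncation of $\lambda^{-1}$ near zero (justified by the almost sure spectral gap) for (iv). The only cosmetic difference is that the paper phrases the truncation via explicit auxiliary functions $h_\alpha$ and works with vague convergence of the spectral distribution, but the substance is identical.
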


\begin{proof}
Let $\lambda_1^{(n)}\le \dots \le \lambda_{p_n}^{(n)}$ and $\mu_1^{(n)}\le \dots \le \mu_{p_n}^{(n)}$ denote the ordered eigenvalues of $\tilde{X}'\tilde{X}/n$ and $W'W/n$, respectively. Then, 
\begin{align*}
\lambda_1^{(n)} = \inf_{\|t\|=1} t'W'\Lambda^2Wt/n 
\ge \left(\min_{i=1,\dots, n} l_i^2\right) \inf_{\|t\|=1} t'W'Wt/n 
\ge c^{2} \mu_1^{(n)},
\end{align*}
and from the Bai-Yin Theorem \citep{Bai93} it follows that $\mu_1^{(n)} \to (1-\sqrt{\kappa})^{2}>0$, almost surely, as $p_n/n\to \kappa \in[0,1)$ \citep[cf.][for the case $\kappa=0$]{Huber13}. This finishes the proof of part~\ref{lemma:traceConvA}. Part~\ref{lemma:traceConvA'} is now a textbook argument: Simply note that for $k\le n$, we have
$\P(\lambda_1^{(n)} \le \eps) \le \P(\inf_{r\ge k} \lambda_1^{(r)} \le \eps)$ and that $\inf_{r\ge k} \lambda_1^{(r)} \le \inf_{r\ge k+1} \lambda_1^{(r)}$ for all $k\in\N$. Thus
\begin{align*}
&\limsup_{n\to\infty} \P(\lambda_1^{(n)} \le \eps) 
=
\inf_{k\in\N}\sup_{n\ge k} \P(\lambda_1^{(n)} \le \eps) \\
&\quad
\le \inf_{k\in\N} \P\left(\inf_{r\ge k} \lambda_1^{(r)} \le \eps\right)
=
\lim_{k\to\infty} \P\left(\inf_{r\ge k} \lambda_1^{(r)} \le \eps\right) \\
&\quad
= 
\P\left(\forall k\in\N : \inf_{r\ge k} \lambda_1^{(r)} \le \eps\right)
=
\P\left(\liminf_{n\to\infty} \lambda_1^{(n)} \le \eps\right) = 0.
\end{align*}

Next, abbreviate $\lambda_j=\lambda_j^{(n)}$, $\mu_j=\mu_j^{(n)}$, for $m\ge1$ set $\alpha_m := c^{2m} (1-\sqrt{\kappa})^{2m}$ and, for $\alpha>0$, define the functions $h_0$ and $h_\alpha$ by $h_0(y) = 1/|y|$ if $y\ne 0$ and $h_0(0)=0$, and by $h_\alpha(y) = 1/|y|$, if $|y|> \alpha/2$ and $h_\alpha(y) = 2/\alpha$, if $|y|\le\alpha/2$. With this notation, and from the previous considerations, we see that the difference between
$$
\trace{(\tilde{X}'\tilde{X})^{\dagger m}} = n^{-m} \trace{(\tilde{X}'\tilde{X}/n)^{\dagger m}}
=  \frac{p_n}{n^m} \frac{1}{p_n}\sum_{j=1}^{p_n} h_0(\lambda_j^m),
$$
and 
$$
\frac{p_n}{n^m} \frac{1}{p_n}\sum_{j=1}^{p_n} h_{\alpha_m}(\lambda_j^m)
$$
converges to zero, almost surely, because $\lambda_j^m\ge \lambda_1^m \ge c^{2m}\mu_1^{m} \to \alpha_m > \alpha_m/2>0$, almost surely. But we have $n^{-m} \sum_{j=1}^{p_n} h_{\alpha_m}(\lambda_j^m) \le (p_n/n^m) (2/\alpha_m) \to 0$, if $m>1$, or if $m=1$ and $\kappa=0$. This finishes \ref{lemma:traceConvB} and the case $\kappa=0$ of part~\ref{lemma:traceConvC}.

For the remainder of part~\ref{lemma:traceConvC}, let $m=1$ and $\kappa>0$, and first note that the empirical spectral distribution function $F_n^{\Lambda^2}$ of $\Lambda^2$ is simply given by the empirical distribution function of $l_1^2,\dots, l_n^2$, and this converges weakly (even uniformly) to the distribution function of $l_1^2$, almost surely. Hence, from Theorem~4.3 in \citet{BaiSilv10}, it follows that, almost surely, the empirical spectral distribution function $F_n^{\tilde{X}'\tilde{X}/n}$ of $\tilde{X}'\tilde{X}/n$ converges vaguely, as $p_n/n\to\kappa\in(0,1)$, to a non-random distribution function $F$ that depends only on $\kappa$ and on the distribution of $l_1^2$. From the argument in the previous paragraph we know that $\lambda_1 \ge c^2\mu_1 \to c^2(1-\sqrt{\kappa})^2=\alpha_1>0$, almost surely, and thus the support of $F$ must be lower bounded by $\alpha_1$. Since $h_{\alpha_1}$ is continuous and vanishes at infinity, by vague convergence, we have \citep[cf.][relation (28.2)]{Billingsley95}
\begin{align*}
\frac{1}{p_n}\sum_{j=1}^{p_n} h_{\alpha_1}(\lambda_j) = \int\limits_{-\infty}^\infty &h_{\alpha_1}(y) dF_n^{\tilde{X}'\tilde{X}/n}(y) \\
&\xrightarrow[]{a.s.} \int\limits_{-\infty}^\infty h_{\alpha_1}(y) dF(y) 
= \int\limits_{-\infty}^\infty \frac{1}{y}\,dF(y) =: \tau_0^2 \in (0, 1/\alpha_1).
\end{align*}
Thus 
$$
\frac{p_n}{n} \frac{1}{p_n} \sum_{j=1}^{p_n} h_{\alpha_1}(\lambda_j) \quad\xrightarrow[]{a.s.}\quad \kappa \tau_0^2 \;=:\; \tau^2 >0.
$$
\end{proof}

\begin{remark}\label{rem:tau}\normalfont
If the $l_i$ in Lemma~\ref{lemma:traceConv} satisfy $|l_i|=1$, almost surely, then $\tau$ in part~\ref{lemma:traceConvC} is given by $\tau(\kappa)= \sqrt{\kappa/(1-\kappa)}$ \citep[cf.][Lemma~B.2]{Huber13}.
\end{remark}


\begin{lemma}\label{lemma:JSneg}
Let $\mathcal P_n^{(lin)}(\mathcal L_l, \mathcal L_w, \mathcal L_v)$ denote the class of distributions in Condition~\ref{c.linmod} and assume that $\mathcal L_l$ has a finite fourth moment. Furthermore, let $p_n/n\to\kappa>0$ and $n > p_n$ for all $n\in\N$. Then there exist probability measures $P_n\in\mathcal P_n^{(lin)}(\mathcal L_l, \mathcal L_w, \mathcal L_v)$, such that for every $c\in[0,1]$, every $\eta\in(0,\infty]$ and every $\eps\in(0,1)$, the James-Stein-type estimator $\hat{\beta}_n(c)$ satisfies
\begin{align*}
\P_n\left(
		\Big\|\Sigma_{P_n}^{1/2}\left(\hat{\beta}_n(c) - \beta_{P_n}\right)/\sigma_{P_n}\Big\|_{2+\eta}  \ge \eps c\sqrt{\kappa}/2
\right) \quad\xrightarrow[n\to\infty]{}\quad1.
\end{align*}
\end{lemma}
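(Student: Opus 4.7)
The strategy is to exhibit a sequence $P_n\in\mathcal P_n$ along which the probability in the lemma converges to $1$; the supremum is then automatically at least as large. Because $\|x\|_{2+\eta}\ge\|x\|_\infty\ge|x_1|$ for every $\eta\in(0,\infty]$ and every $x\in\R^{p_n}$, it suffices to bound the absolute first coordinate below. I would choose $\Sigma_n=I_{p_n}$, $\sigma_n=1$, and $\beta_n=\sqrt\kappa\,e_1$, so that the shrinkage-induced bias $(s_n-1)\beta_n$ is concentrated on a single coordinate while the OLS noise $(X'X)^{-1}X'u=(V'\Lambda^2 V)^{-1}V'\Lambda u$ spreads out over all $p_n$ coordinates. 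With this choice the first coordinate of $\Sigma_n^{1/2}(\hat\beta_n(c)-\beta_n)/\sigma_n$ equals $s_nU_{n,1}+(s_n-1)\sqrt\kappa$, where $U_{n,1}:=e_1'(V'\Lambda^2 V)^{-1}V'\Lambda u$.

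The plan then requires two convergences in $P_n^n$-probability: $s_n\to 1-c/2$ and $U_{n,1}\to 0$. For the former, the decomposition used in the proof of Lemma~\ref{lemma:JSmisspec}, specialised to the exact linear model (so the misspecification term $e_i$ vanishes), gives $\|V\beta_n/\sqrt n\|_2^2\to\kappa$ by the law of large numbers, $\|P_Xu/\sqrt n\|_2^2\to\kappa$ as shown there, and a vanishing cross term by conditional Chebyshev, hence $t_n^2\to 2\kappa$; combined with $\hat\sigma_n^2/\sigma_n^2\to 1$ (again by Lemma~\ref{lemma:JSmisspec}) and $c\in[0,1]$, this yields $s_n=\bigl(1-\tfrac{p_n}{n}\tfrac{c}{t_n^2}\tfrac{\hat\sigma_n^2}{\sigma_n^2}\bigr)_+\to 1-c/2$. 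For $U_{n,1}$, conditioning on $(V,\Lambda)$ gives mean zero and variance $[(V'\Lambda^2 V)^{-1}]_{11}\le 1/\lmin(V'\Lambda^2 V)$; Lemma~\ref{lemma:traceConv}\ref{lemma:traceConvA} guarantees $\lmin(V'\Lambda^2 V/n)\ge c_0^2(1-\sqrt\kappa)^2/2$ on an event of probability $\to 1$, so $U_{n,1}=O_{P_n}(n^{-1/2})\to 0$ by Chebyshev.

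Combining the two convergences, $s_nU_{n,1}+(s_n-1)\sqrt\kappa\to-c\sqrt\kappa/2$ in $P_n^n$-probability, so that for every $\eps\in(0,1)$ we obtain $P_n^n\bigl(|[\Sigma_n^{1/2}(\hat\beta_n(c)-\beta_n)/\sigma_n]_1|\ge\eps c\sqrt\kappa/2\bigr)\to 1$, which together with $\|x\|_{2+\eta}\ge|x_1|$ gives the lemma. The only piece of ingenuity is the choice of magnitude $\delta=\sqrt\kappa$: it maximises the map $\delta\mapsto\delta\cdot\kappa c/(\kappa+\delta^2)$ on $[0,\infty)$ and produces exactly the constant $c\sqrt\kappa/2$ that appears in the statement. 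Everything else is a direct application of Lemmas~\ref{lemma:JSmisspec} and~\ref{lemma:traceConv}, so I expect no serious obstacle beyond verifying these routine probabilistic details.
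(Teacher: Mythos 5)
Your proposal is correct and follows the same overall strategy as the paper: the same worst-case choice $\Sigma_P^{1/2}\beta_P/\sigma_P=\sqrt{\kappa}\,e_1$, the same identification of the limits $t_n^2\to 2\kappa$, $\hat{\sigma}_n^2/\sigma_P^2\to1$ and hence $s_n\to 1-c/2$ via the arguments of Lemma~\ref{lemma:JSmisspec} specialized to the correctly specified linear model. Where you differ is in how the OLS fluctuation is shown to be negligible in the $\ell^{2+\eta}$-norm. The paper bounds $\|\cdot\|_{2+\eta}\ge\|\cdot\|_{(2+\eta)\lor 4}$, applies the reverse triangle inequality to isolate the deterministic bias term $|s_n-1|\sqrt{\kappa}$, and then invokes the second claim of Lemma~\ref{lemma:OLScorrSpec}, i.e.\ $\|\Sigma_P^{1/2}(\hat{\beta}_n-\beta_P)/\sigma_P\|_4\to0$, which rests on the fourth-moment trace computation $\trace{(X'X)^{\dagger 2}}\to0$. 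You instead use $\|x\|_{2+\eta}\ge|x_1|$ and control only the first coordinate of the OLS error, whose conditional variance is $[(X'X)^{\dagger}]_{11}\le 1/\lmin(X'X)=O_{P_n}(n^{-1})$ by Lemma~\ref{lemma:traceConv}. This is a genuinely more elementary route for that step: it bypasses Lemma~\ref{lemma:OLScorrSpec} entirely and needs only a second-moment bound, at the cost of being tied to the specific coordinate structure of the chosen $\beta_P$ (which is harmless here, since one is free to choose the least favourable sequence). Your remark that $\delta=\sqrt{\kappa}$ maximizes $\delta\mapsto \kappa c\delta/(\kappa+\delta^2)$ correctly explains the provenance of the constant $c\sqrt{\kappa}/2$. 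I see no gap.
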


\begin{proof}
Consider a sequence $P_n\in\mathcal P_n$, such that $\beta_{P_n} = \sigma_{P_n}\Sigma_{P_n}^{-1/2}(\sqrt{\kappa},0,\dots, 0)'$, which implies $b := \Sigma_{P_n}^{1/2}\beta_{P_n}/\sigma_{P_n} = (\sqrt{\kappa}, 0,\dots,0)'\in\R^{p_n}$ and $\|b\|_2 = \|b\|_q = \sqrt{\kappa}$, for every $q\in(0,\infty]$. Simple relations of $\ell^q$-norms yield
\begin{align*}
&\left\|\Sigma_{P_n}^{1/2}\left(\hat{\beta}_n(c) - \beta_{P_n}\right)/\sigma_{P_n}\right\|_{2+\eta}  
\ge
\left\|\Sigma_{P_n}^{1/2}\left(\hat{\beta}_n(c) - \beta_{P_n}\right)/\sigma_{P_n}\right\|_{(2+\eta)\lor 4}\\
&\quad=
\left\|s_n\Sigma_{P_n}^{1/2}(\hat{\beta}_n - \beta_{P_n})/\sigma_{P_n} - (1-s_n)b\right\|_{(2+\eta)\lor 4}  \\
&\quad\ge
\left|
	|s_n|\left\|\Sigma_{P_n}^{1/2}(\hat{\beta}_n - \beta_{P_n})/\sigma_{P_n}\right\|_{(2+\eta)\lor 4} 
	-
	|s_n-1|\sqrt{\kappa}
\right|\\	
&\quad\ge
|s_n-1|\sqrt{\kappa} - |s_n|\left\|\Sigma_{P_n}^{1/2}(\hat{\beta}_n - \beta_{P_n})/\sigma_{P_n}\right\|_{(2+\eta)\lor 4},
\end{align*}
where $s_n$ is defined as before, i.e.,
\begin{align*}
s_n = \begin{cases}
\left( 1 - \frac{p_n}{n}\frac{c}{t_n^2}\frac{\hat{\sigma}_n^2}{\sigma_n^2}\right)_+, \quad&\text{if } t_n^2 >0,\\
1, &\text{else,}
\end{cases}
\end{align*}
and $t_n^2 = \hat{\beta}_n'X'X\hat{\beta}_n/(n\sigma_n^2)$, so that $\hat{\beta}_n(c) = s_n \hat{\beta}_n$. Clearly, we have $|s_n|\le 1$ and $\left\|\Sigma_{P_n}^{1/2}(\hat{\beta}_n - \beta_{P_n})/\sigma_{P_n}\right\|_{(2+\eta)\lor 4} \le \left\|\Sigma_{P_n}^{1/2}(\hat{\beta}_n - \beta_{P_n})/\sigma_{P_n}\right\|_4 \to 0$, in $P_n$-probability, by Lemma~\ref{lemma:OLScorrSpec}. Therefore, we see that
\begin{align}\label{eq:lowerB}
&\P_n\left( |s_n-1|\sqrt{\kappa} - o_{P_n}(1) \ge \eps c\sqrt{\kappa}/2 \right)\\
&\quad\le 
\P_n\left( \left\|\Sigma_{P_n}^{1/2}\left(\hat{\beta}_n(c) - \beta_{P_n}\right)/\sigma_{P_n}\right\|_{2+\eta} \ge \eps c\sqrt{\kappa}/2 \right).\notag
\end{align}
Now, as in the proof of Lemma~\ref{lemma:JSmisspec} but with $e=0$ (now the linear model is assumed correct), 
$$
t_n^2 = \left\| \frac{\tilde{X}\Sigma_{P_n}^{1/2}\beta_{P_n}}{\sqrt{n\sigma_{P_n}^2}} + \frac{\Pi_{\tilde{X}}v}{\sqrt{n}}\right\|_2^2
$$
and we showed already in that proof that the mixed term vanishes and $\|\Pi_{\tilde{X}}v/\sqrt{n}\|_2^2\to \kappa$ in $P_n$-probability. For the remaining quadratic form note that $\tilde{x}_i \stackrel{\P_n}{=} l_i (w_{i1},\dots, w_{ip_n})'$, where $l_i \thicksim \mathcal L_l$ and $w_{ij}\thicksim\mathcal L_w$ are all independent, for $i=1,\dots, n$, $j=1,\dots,p_n$, in view of Condition~\ref{c.linmod}. Thus $A:= \|\tilde{X}\Sigma_{P_n}^{1/2}\beta_{P_n}/\sqrt{n\sigma_{P_n}^2}\|_2^2 = \frac{1}{n} \sum_{i=1}^n (b'\tilde{x}_i)^2$. Clearly, $\E[A] = \|b\|_2^2 = \kappa$ and
\begin{align*}
\Var[A] &= \frac{1}{n} \Var[(b'\tilde{x}_1)^2]
=
\frac{1}{n} \Var\left[ l_1^2 \sum_{j_1,j_2}^{p_n} b_{j_1}b_{j_2} w_{1j_1}w_{2j_2}\right] \\
&\le
\frac{1}{n} \E\left[ l_1^4 \sum_{j_1,j_2,j_3,j_4}^{p_n} b_{j_1}b_{j_2}b_{j_3}b_{j_4}w_{1j_1}w_{2j_2}w_{1j_3}w_{2j_4}\right] \\
&=
\frac{1}{n} \E[l_1^4] \left[ \E[w_{11}^4] \sum_{j=1}^{p_n} b_j^4 + 3 \sum_{j_1\ne j_2}^{p_n} b_{j_1}^2b_{j_2}^2\right] \\
&\le
\frac{1}{n} \E[l_1^4] (\E[w_{11}^4] + 3) \|b\|_2^4 \xrightarrow[]{n\to\infty} 0.
\end{align*}
Thus, $t_n^2 \to \kappa + \kappa = 2\kappa$, in $P_n$-probability. Moreover, $\hat{\sigma}_n^2 /\sigma_n^2\to 1$, in $P_n$-probability, because its conditional mean given $X$ converges to $1$ and its conditional variance converges to zero (see the arguments in \eqref{eq:CondVarHatsigma} and in the lines immediately before that display). Thus $|s_n-1|\sqrt{\kappa} \to c \sqrt{\kappa}/2 > \eps c \sqrt{\kappa}/2$, such that the probability in \eqref{eq:lowerB} converges to $1$ and the proof is finished. 
\end{proof}


\begin{lemma}\label{lemma:0stable}
If $\hat{\mu}_n$ is symmetric and has $k$-stability coefficient $\eta_{n,k}(P)=0$ for all data generating distributions $P$ in some class $\mathcal P$ of distributions on $\mathcal Z$, then there exists a collection $\{g_P:P\in\mathcal P\}$ of measurable functions $g_P : \mathcal X\to \R$, such that for all $P\in\mathcal P$, 
\begin{align*}
&\P\left(\{(T_n,z_0)\in \mathcal Z^{n+1} : M_{n,p}(T_n,x_0) = g_P(x_0)\}\right)=1.
\end{align*}
\end{lemma}
\begin{remark}\normalfont
Note that the dependence of $g_P$ on $P$ in Lemma~\ref{lemma:0stable} can not be avoided. For example, suppose that $\mathcal P = \{P_0,P_1\}$ with disjoint supports $S_0\cap S_1 =\varnothing$ and consider the naive algorithm
\begin{align*}
M_{n,p}(T_n,x_0) := \begin{cases}
0, &\text{if } T_n \in S_0^n,\\
1, &\text{else.}
\end{cases}
\end{align*} 
Clearly, this algorithm has $\eta_{n,k}(P_0)=\eta_{n,k}(P_1)=0$, but it does depend on $T_n$. This is not in contradiction with Lemma~\ref{lemma:0stable}, because $M_{n,p}(T_n,x_0) = g_P(x_0)$, $\P$-almost surely, for any $P\in\mathcal P$, where
\begin{align*}
g_P(x_0) := \begin{cases}
0, &\text{if } P=P_0,\\
1, &\text{if } P=P_1.
\end{cases}
\end{align*} 
The paradox is resolved by noticing that since the supports $S_0$ and $S_1$ are disjoint we can perfectly discriminate between $P_0$ and $P_1$ using the training data.
\end{remark}

\begin{proof}[Proof of Lemma~\ref{lemma:0stable}]
Fix $P\in\mathcal P$. For $i=1,\dots, n$, let $z_i, z_i'\in\mathcal Z$ and $x_0\in\mathcal X$. For $l\in[k+1]$, let $C_l\in\mathcal Z^n$ be the $n$-vector with entries $(C_l)_i = z_i'$, if $i\in\bigcup_{\ell=1}^{l-1} K_\ell$, and $(C_l)_i = z_i$, if $i\in \bigcup_{\ell=l}^k K_\ell$, where empty unions are interpreted as the empty set. For $l\in[k]$, the $(n-m_l)$-vector $C_l^*$ is obtained from $C_l$ by removing the components with index $i\in K_l$, or, equivalently, from $C_{l+1}$ by again removing components with index $i\in K_l$. Note that
\begin{align*}
&\left| M_{n,p}((z_i)_{i\in[n]},x_0) - M_{n,p}((z_i')_{i\in[n]},x_0) \right|\\
&=
\left|\sum_{l=1}^k \left[M_{n,p}(C_l,x_0) - M_{n,p}(C_{l+1},x_0) \right]\right|\\
&\le
\sum_{l=1}^k \Big[\left| M_{n,p}(C_l,x_0) - M_{n-m_l,p}(C_l^*,x_0) \right|+\\
&\quad\quad \left|M_{n-m_l,p}(C_l^*,x_0) - M_{n,p}(C_{l+1},x_0) \right|\Big].
\end{align*}
Since $\eta_{n,k}(P)=0$, the integral of this upper bound with respect to $P^{2n+1}$ is equal to zero. Therefore, applying Lemma~\ref{lemma:0stableKEY} with $f= M_{n,p}$, $S = \mathcal Z^n$, $P_S = P^n$, $T = \mathcal X$ and $P_T$ equal to the $x$-marginal distribution of $P$, the claim follows.
\end{proof}

\begin{lemma} \label{lemma:0stableKEY}
Let $(S,\mathcal S, P_S)$ and $(T, \mathcal T, P_T)$ be two probability spaces, and let $f:S\times T \to \R$ be measurable w.r.t. the product sigma algebra $\mathcal S \otimes \mathcal T$ and the Borel sigma algebra on $\R$. 
If $$\int_{S^2\times T} | f(s_1,t) - f(s_2,t)|\,d P_S\otimes P_S\otimes P_T(s_1,s_2, t) = 0,$$
then there exists a measurable function $g:T\to\R$, such that 
$$P_S\otimes P_T\Big((s,t): f(s,t) = g(t)\Big) = 1.$$
\end{lemma}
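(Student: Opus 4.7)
The plan is to first reduce the hypothesis via Fubini to the statement that the section $f(\cdot,t)$ is $P_S$-almost surely constant for $P_T$-almost every $t$, and then to produce a measurable selector for that constant. The only real technical point is measurability of $g$; a pointwise choice of the constant $c(t)$ is trivial but potentially non-measurable, so we need to construct $g$ as an explicit integral.

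\smallskip

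\emph{Step 1 (Reduction via Fubini).} By non-negativity of the integrand and Fubini--Tonelli, the hypothesis
$\int_{S^2\times T} |f(s_1,t)-f(s_2,t)|\,dP_S\otimes P_S\otimes P_T = 0$
implies that there is a set $M\in\mathcal T$ with $P_T(M)=1$ such that for every $t\in M$, $f(s_1,t)=f(s_2,t)$ for $P_S\otimes P_S$-a.e.\ $(s_1,s_2)$. A second application of Fubini to this inner integral gives, for each $t\in M$, the existence of a $P_S$-null set $N_t\in\mathcal S$ and a value $c(t)\in\R$ (take $c(t)=f(s_*,t)$ for any fixed $s_*\notin N_t$) with $f(s,t)=c(t)$ for every $s\in S\setminus N_t$.

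\smallskip

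\emph{Step 2 (Measurable selector).} Let $\phi:\R\to(-1,1)$ be the bounded strictly increasing continuous injection $\phi(x)=x/(1+|x|)$. Then $\phi\circ f:S\times T\to(-1,1)$ is jointly measurable and bounded, so by Fubini the function
\[
\tilde g(t)\;:=\;\int_S \phi\bigl(f(s,t)\bigr)\,dP_S(s)
\]
is well defined and $\mathcal T$-measurable. Since $|\phi\circ f|<1$ pointwise, we have $|\tilde g(t)|<1$ for all $t\in T$: if $\tilde g(t)=1$, then $\int (1-\phi(f(s,t)))\,dP_S=0$ with a non-negative integrand would force $\phi(f(\cdot,t))=1$ $P_S$-a.s., contradicting $\phi(\R)\subseteq(-1,1)$, and similarly for $\tilde g(t)=-1$. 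Hence $g(t):=\phi^{-1}(\tilde g(t))$ is a well-defined $\mathcal T$-measurable real function.

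\smallskip

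\emph{Step 3 (Verification).} For any $t\in M$, Step 1 gives $\phi(f(s,t))=\phi(c(t))$ for $P_S$-a.e.\ $s$, hence $\tilde g(t)=\phi(c(t))$ and $g(t)=c(t)$. Consequently, for every $t\in M$ we have $f(s,t)=g(t)$ for all $s\in S\setminus N_t$, i.e.\ the section $\{s:f(s,t)=g(t)\}$ has full $P_S$-measure. Applying Fubini once more to the indicator of the measurable set $\{(s,t)\in S\times T: f(s,t)=g(t)\}$ yields
\[
(P_S\otimes P_T)\bigl(\{(s,t): f(s,t)=g(t)\}\bigr)
\;=\;\int_M P_S\bigl(\{s:f(s,t)=g(t)\}\bigr)\,dP_T(t)\;=\;P_T(M)\;=\;1,
\]
which is the claim.

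\smallskip

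The main (and only) subtlety is measurability of $g$: simply defining $g(t)$ to be the $P_S$-a.s.\ constant value of $f(\cdot,t)$ for $t\in M$ is not obviously measurable. The bounded-injection trick via $\phi$ circumvents this, since it turns the selection problem into an ordinary Fubini integral of a bounded function, after which $\phi^{-1}$ recovers $g$.
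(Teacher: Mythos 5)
Your proof is correct, and it takes a genuinely different route to the one technical point of the lemma, namely the measurability of $g$. You slice the triple integral in the order ``for $P_T$-a.e.\ $t$, the section $f(\cdot,t)$ is $P_S$-a.s.\ constant'' and then recover that constant measurably by averaging: you compose $f$ with the bounded homeomorphism $\phi(x)=x/(1+|x|)$, integrate out $s$, and invert $\phi$. The paper instead slices in the opposite order: it first finds (via Tonelli) a $P_S\otimes P_S$-null set $N$ off which $f(s_1,\cdot)=f(s_2,\cdot)$ $P_T$-a.s., then a further Tonelli argument produces a single ``good'' point $s_1$ with $P_S(N_{s_1})=0$, and it simply defines $g(t):=f(s_1,t)$, which is measurable as a section of a jointly measurable function. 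The paper's construction is slightly more economical (no truncation trick, no inversion, and it works verbatim even if one only assumes joint measurability without any integrability of $f$ itself --- though your $\phi$ handles that too); your construction has the advantage of not requiring the selection of a distinguished $s_1$, so $g$ is given by a single explicit formula depending only on $f$ and $P_S$, and the same device generalizes to settings where no individual section is guaranteed to be a valid representative. Both arguments close with the same Fubini computation on the indicator of $\{(s,t):f(s,t)=g(t)\}$, so the verification step is essentially identical.
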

\begin{proof}
By Tonelli's theorem we have
$$
\int_{T} | f(s_1,t) - f(s_2,t)|\,d P_T(t) = 0,
$$
for $P_S\otimes P_S$-almost all $(s_1,s_2)$, i.e., for all $(s_1,s_2)\in N^c\in\mathcal S\otimes\mathcal S$, where $P_S\otimes P_S(N)=0$. Furthermore, whenever $(s_1,s_2)\in N^c$, then $f(s_1,t) = f(s_2,t)$, for $P_T$-almost all $t$, i.e., for all $t \in M(s_1,s_2)^c\in\mathcal T$, with $P_T(M(s_1,s_2))=0$.
For $s_1\in S$, consider $N_{s_1} := \{s\in S: (s_1,s)\in N\}$, i.e., the $s_1$-section of $N$, and use Tonelli again, to see that there exists a $P_S$-null set $L\in\mathcal S$, such that $P_S(N_{s_1}) = 0$, for all $s_1\in L^c$.

Next, fix $s_1\in L^c$ and define the set 
$$A := A(s_1):= \{(s,t)\in S\times T : s\in N_{s_1}^c, t\in M(s_1,s)^c\},$$ 
as well as the function $g(t):= f(s_1,t)$, for $t\in T$.\footnote{Note that by construction, the function $g$ depends not only on $f$, but also on the null set $L$, and thus on both the probability spaces $(S,\mathcal S, P_S)$ and $(T, \mathcal T, P_T)$.} 
We therefore have $A \subseteq \{(s,t): f(s_1,t)=f(s,t)\} = \{(s,t): g(t)=f(s,t)\}$ and, for $s\in N_{s_1}^c$, $A_s = M(s_1,s)^c$ has $P_T$-probability one. To conclude, we use Tonelli again, to obtain
$$
P_S\otimes P_T(A) = \int_S P_T(A_s) \,dP_S(s)
=
\int_{N_{s_1}^c} P_T(A_s) \,dP_S(s)
=
P_S(N_{s_1}^c) = 1.
$$
\end{proof}

\end{appendix}

\end{document}